\newcommand{\defi}[1]{\textsf{#1}} 
\newcommand{\scriptH}{\mathscr{H}}
\newcommand{\Aff}{{\mathbb A}}
\newcommand{\C}{{\mathbb C}}
\newcommand{\F}{{\mathbb F}}
\newcommand{\PP}{{\mathbb P}}
\newcommand{\Q}{{\mathbb Q}}
\newcommand{\R}{{\mathbb R}}
\newcommand{\Z}{{\mathbb Z}}
\newcommand{\Qbar}{{\overline{\Q}}}
\newcommand{\kbar}{{\overline{k}}}
\newcommand{\Kbar}{{\overline{K}}}
\newcommand{\Rbar}{{\overline{R}}}
\newcommand{\Fbar}{{\overline{\F}}}
\newcommand{\kk}{{\sf{k}}}
\newcommand{\pp}{{\mathfrak p}}
\newcommand{\calA}{{\mathcal A}}
\newcommand{\calE}{{\mathcal E}}
\newcommand{\calM}{{\mathcal M}}
\newcommand{\calO}{{\mathcal O}}
\newcommand{\calX}{{\mathcal X}}
\newcommand{\calY}{{\mathcal Y}}
\newcommand{\FF}{{\mathscr F}}
\newcommand{\GG}{{\mathscr G}}
\newcommand{\LL}{{\mathscr L}}
\newcommand{\OO}{{\mathscr O}}
\newcommand{\XX}{{\mathscr X}}
\newcommand{\ZZ}{{\mathscr Z}}
\DeclareMathOperator{\spe}{sp}
\DeclareMathOperator{\jumping}{jumping}
\DeclareMathOperator{\Coh}{Coh}
\DeclareMathOperator{\id}{id}
\DeclareMathOperator{\Fil}{Fil}
\DeclareMathOperator{\Spf}{Spf}
\DeclareMathOperator{\ev}{ev}
\DeclareMathOperator{\Betti}{Betti}
\DeclareMathOperator{\cl}{cl}
\DeclareMathOperator{\coker}{coker}
\DeclareMathOperator{\rk}{rk}
\DeclareMathOperator{\Char}{char}
\DeclareMathOperator{\im}{im}
\DeclareMathOperator{\End}{End}
\DeclareMathOperator{\Hom}{Hom}
\DeclareMathOperator{\Gal}{Gal}
\DeclareMathOperator{\Res}{Res}
\DeclareMathOperator{\Br}{Br}
\DeclareMathOperator{\Pic}{Pic}
\DeclareMathOperator{\NS}{NS}
\DeclareMathOperator{\PIC}{\bf Pic}
\DeclareMathOperator{\Spec}{Spec}
\DeclareMathOperator{\Frac}{Frac}
\newcommand{\cris}{{\operatorname{cris}}}
\newcommand{\dR}{{\operatorname{dR}}}
\newcommand{\alg}{{\operatorname{alg}}}
\newcommand{\an}{{\operatorname{an}}}
\newcommand{\et}{{\operatorname{\textup{\'et}}}}
\newcommand{\red}{{\operatorname{red}}}
\newcommand{\HH}{{\operatorname{H}}}
\newcommand{\RR}{{\operatorname{R}}}
\newcommand{\injects}{\hookrightarrow}
\newcommand{\isom}{\simeq}
\newcommand{\intersect}{\cap} 
\newcommand{\Union}{\bigcup} 
\newcommand{\union}{\cup} 
\newcommand{\tensor}{\otimes}
\newcommand{\directsum}{\oplus} 
\newcommand{\isomto}{\overset{\sim}{\rightarrow}}
\newcommand{\isomfrom}{\overset{\sim}{\leftarrow}}
\numberwithin{equation}{section}
\newtheorem{theorem}[equation]{Theorem}
\newtheorem{lemma}[equation]{Lemma}
\newtheorem{corollary}[equation]{Corollary}
\newtheorem{proposition}[equation]{Proposition}
\newtheorem{conjecture}[equation]{Conjecture}
\theoremstyle{definition}
\newtheorem{definition}[equation]{Definition}
\newtheorem{setup}[equation]{Setup}
\newtheorem{question}[equation]{Question}
\newtheorem{example}[equation]{Example}
\newtheorem{claim}[equation]{Claim}
\theoremstyle{remark}
\newtheorem{remark}[equation]{Remark}
\begin{document}

\title{N\'{e}ron-Severi groups under specialization}
\subjclass[2000]{Primary 14C25; Secondary 14D07, 14F25, 14F30}
\keywords{N\'{e}ron-Severi group, Picard number, jumping locus, specialization, convergent isocrystal, geometric monodromy, variational Hodge conjecture}
\author{Davesh Maulik}
\address{Department of Mathematics, Massachusetts Institute of Technology, Cambridge, MA 02139-4307, USA}
\email{dmaulik@math.mit.edu}
\author{Bjorn Poonen}
\address{Department of Mathematics, Massachusetts Institute of Technology, Cambridge, MA 02139-4307, USA}
\email{poonen@math.mit.edu}
\urladdr{http://math.mit.edu/~poonen/}
\thanks{D.M.\ is supported by a Clay Research Fellowship.  B.P. is partially supported by NSF grant DMS-0841321.}
\date{October 30, 2010}

\begin{abstract}
Andr\'e used Hodge-theoretic methods to 
show that in a smooth proper family $\calX \to B$
of varieties over an algebraically closed field $k$ of characteristic~$0$,
there exists a closed fiber having
the same Picard number as the geometric generic fiber,
even if $k$ is countable.
We give a completely different approach to Andr\'e's theorem, 
which also proves the following refinement:
in a family of varieties with good reduction at $p$,
the locus on the base where the Picard number jumps 
is nowhere $p$-adically dense.
Our proof uses the ``$p$-adic Lefschetz $(1,1)$ theorem''
of Berthelot and Ogus, combined with an analysis of $p$-adic power series.
We prove analogous statements for cycles of higher codimension,
assuming a $p$-adic analogue of the variational Hodge conjecture,
and prove that this analogue implies the usual variational Hodge conjecture.
Applications are given to abelian schemes and to proper families
of projective varieties.
\end{abstract}

\maketitle

\section{Introduction}\label{S:introduction}

\subsection{The jumping locus}

For a smooth proper variety $X$ over an algebraically closed field,
let $\NS X$ be its N\'{e}ron-Severi group,
and let $\rho(X)$ be the rank of $\NS X$.
(A \defi{variety} is a separated scheme of finite type over a field, 
possibly non-reduced or reducible.
See Sections \ref{S:notation} and~\ref{S:NS groups}
for further definitions and basic facts.)

Now suppose that we have a smooth proper morphism $\calX \to B$,
where $B$ is an irreducible variety over
an algebraically closed field $k$ of characteristic $0$.
If $b \in B(k)$, then choices lead to an injection
of the N\'{e}ron-Severi group $\NS \calX_{\bar{\eta}}$
of the geometric generic fiber into
the N\'{e}ron-Severi group $\NS \calX_b$ of the fiber above $b$,
so $\rho(\calX_b) \ge \rho(\calX_{\bar{\eta}})$: 
see Proposition~\ref{P:construction of specialization 2}.
The \defi{jumping locus}
\[
	B(k)_{\jumping} :=
	\{ b \in B(k) : \rho(\calX_b) > \rho(\calX_{\bar{\eta}}) \}
\]
is a countable union of lower-dimensional subvarieties of $B$.
If $k$ is uncountable, it follows that $B(k)_{\jumping} \ne B(k)$.

This article concerns the general case, in which $k$ may be countable.
Our goal is to present a $p$-adic proof of
the following theorem, first proved by 
Y.~Andr\'e~\cite{Andre1996}*{Th\'eor\`eme 5.2(3)}:

\begin{theorem}
\label{T:global}
Let $k$ be an algebraically closed field of characteristic $0$.
Let $B$ be an irreducible variety over $k$.
Let $\calX \to B$ be a smooth proper morphism.
Then there exists $b \in B(k)$ such that
$\rho(\calX_b) = \rho(\calX_{\bar{\eta}})$.
\end{theorem}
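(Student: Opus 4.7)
The plan is to establish the stronger statement advertised in the abstract --- that for a suitable prime $p$, the jumping locus is nowhere $p$-adically dense in $B$ --- and then to extract a $k$-point of its complement using the density of $\Qbar$-points in a $p$-adic residue disc. The two main tools are the Berthelot-Ogus $p$-adic Lefschetz $(1,1)$ theorem and the identity theorem for convergent $p$-adic power series on a polydisc.

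\emph{Spreading out.} First I would descend $\calX\to B$ to a smooth proper morphism $\calX_R\to B_R$ of smooth irreducible $R$-schemes, where $R\subset k$ is a finitely generated $\Z$-subalgebra, and then locate a maximal ideal $\mm\subset R$ with finite residue field of some characteristic $p$ such that reduction modulo $\mm$ is still smooth with irreducible base. Set $W=\widehat{R_\mm}$ and $K=\Frac W$. Fix any closed point $\bar b_0\in B(\Fbar_q)$ and let $\calD\subset B(\Kbar)$ be its residue disc, a rigid-analytic polydisc in which $B(\Qbar)$ is $p$-adically dense. It suffices to show that $\calD\cap B(\Kbar)_\jumping$ is nowhere dense in $\calD$, because then $B(\Qbar)\subset B(k)$ contributes points outside the jumping locus.

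\emph{Berthelot-Ogus on a residue disc.} The relative crystalline cohomology of the special fibre $\calX_{\F_q}\to B_{\F_q}$ is an $F$-isocrystal whose Gauss-Manin parallel transport trivializes it over $\calD$; this identifies, for each $b\in\calD(\Kbar)$, the space $H^2_{\dR}(\calX_b/\Kbar)$ with $H^2_{\cris}(\calX_{\bar b_0}/W)\otimes\Kbar$. By the $p$-adic Lefschetz $(1,1)$ theorem of Berthelot-Ogus, the image of $\NS(\calX_b)\otimes\Q$ in this common space is exactly the subspace of classes lying in $F^1H^2_{\dR}(\calX_b/\Kbar)$. Choose a $K$-complement $V$ to the image of $\NS(\calX_{\bar\eta})\otimes K$ inside $H^2_{\cris}(\calX_{\bar b_0}/W)\otimes K$. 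For each $0\ne\alpha\in V$, the condition ``$\alpha\in F^1$ at $b$'' cuts out the zero locus of a convergent $p$-adic analytic function $f_\alpha$ on $\calD$, obtained by composing parallel transport with the projection $H^2_{\dR}\to H^2_{\dR}/F^1$; the intersection $\calD\cap B(\Kbar)_\jumping$ is then contained in the countable union of the $\{f_\alpha=0\}$ as $\alpha$ ranges over a countable subset of $V$.

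\emph{Nowhere-density and the main obstacle.} The crux is to show that no $f_\alpha$ vanishes identically on $\calD$. Granted this, the identity theorem for $p$-adic power series on a polydisc makes each $\{f_\alpha=0\}$ a proper analytic subset, hence nowhere $p$-adically dense, and a countable union of such is still not dense in $\calD$ by the Baire property. Excluding identical vanishing is the main obstacle, and is essentially a variational-Hodge-type statement in the $p$-adic setting: if $f_\alpha\equiv 0$ on $\calD$, then Berthelot-Ogus would make $\alpha$ correspond to a line-bundle class on every fibre in the residue disc, and then propagating through the relative Picard scheme of $\calX_R/B_R$, combined with compatibility of specialization (Proposition~\ref{P:construction of specialization 2}), would force $\alpha$ into the image of $\NS(\calX_{\bar\eta})\otimes K$, contradicting $\alpha\in V$. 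Once nowhere density is in hand, the $p$-adic density of $\Qbar$-points in $\calD$ produces the required $b\in B(\Qbar)\subset B(k)$ with $\rho(\calX_b)=\rho(\calX_{\bar\eta})$.
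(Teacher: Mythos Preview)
Your overall strategy---spread out, pass to a $p$-adic residue disc, invoke the Berthelot--Ogus $p$-adic Lefschetz $(1,1)$ theorem to express jumping as the vanishing of certain convergent power series $f_\alpha$---matches the paper's. But the endgame has a genuine gap.

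The sentence ``a countable union of such is still not dense in $\calD$ by the Baire property'' is not what Baire gives you, and even the correct Baire conclusion is not enough. Baire says that a countable union of closed nowhere-dense sets has \emph{dense} complement (a dense $G_\delta$); it does \emph{not} say the union fails to be dense, nor that the complement contains a nonempty open set. Now you want a point of $B(\Qbar)\subset B(k)$ in that complement. But $\Qbar$-points form a countable dense subset of $\calD$, and a dense $G_\delta$ can miss a countable dense set entirely---two dense subsets need not meet. So ``$p$-adic density of $\Qbar$-points'' together with Baire does not produce the required $b$. What the paper supplies here is Proposition~\ref{P:evaluation is injective}: for any finite-dimensional $\Q_p$-subspace $\Lambda$ of convergent power series on a polydisc, there is a nonempty \emph{open} subset on which every nonzero $\lambda\in\Lambda$ is nonvanishing. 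This is a genuinely $p$-adic statement (its archimedean analogue is false, as the paper notes), proved by a delicate valuation argument, and it is exactly what upgrades ``dense complement'' to ``complement contains an open set,'' after which density of $\Qbar$-points in $\calD$ finishes the job.

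A secondary point: your resolution of the ``main obstacle'' (showing $f_\alpha\not\equiv 0$) via ``propagating through the relative Picard scheme'' is vague and not obviously valid---a rigid-analytic section over a disc does not directly yield an algebraic class at the generic point. The paper handles this differently: since the jumping locus is a countable union of proper Zariski-closed subvarieties (Corollary~\ref{C:jumping locus is countable union}) and $K$ is uncountable, the disc is not contained in that union (Lemma~\ref{L:countable union}), so there is some $\beta\in\calD$ with $\rho(\calX_\beta)=\rho(\calX_{\bar\eta})$; then $f_\alpha(\beta)\ne 0$ for every $\alpha$ outside $\NS(\calX_{\bar\eta})_\Q$, so $f_\alpha\not\equiv 0$.
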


\begin{remark}
In fact, Y.~Andr\'e's result is more general,
stated in terms of variation of the motivic Galois group 
in the context of his theory of ``motivated cycles''.
On the other hand, our techniques, which are completely different,
give new information about the jumping locus.

Special cases were proved earlier by T.~Shioda~\cite{Shioda1981}
and most notably T.~Terasoma~\cite{Terasoma1985}.
The arguments of Terasoma and Andr\'e involve, among other ingredients,
an application of a version of Hilbert irreducibility
for infinite algebraic extensions associated to $\ell$-adic representations.
We will say more about their methods and their relationship with ours
in Section~\ref{S:complex outline}.
\end{remark}

\begin{remark}
\label{R:NS versus its rank}
The condition $\rho(\calX_b) = \rho(\calX_{\bar{\eta}})$
is equivalent to the condition
that the specialization map $\NS \calX_{\bar{\eta}} \to \NS \calX_b$ 
is an isomorphism:
see Proposition~\ref{P:construction of specialization 2}.
\end{remark}

\begin{remark}
Theorem~\ref{T:global} can be trivially extended 
to an arbitrary ground field $k$ of characteristic $0$, 
to assert the existence of a closed point $b \in B$
such that the {\em geometric} Picard number of $\calX_b$ 
equals $\rho(\calX_{\bar{\eta}})$.
Similarly, one could relax the assumption on $B$ and allow it to be any
irreducible scheme of finite type over $k$.
\end{remark}

\begin{remark}
For explicit examples of specializations where the Picard number 
does not jump, see \cite{Shioda1981} and \cite{VanLuijk2007}.
\end{remark}

\subsection{The $p$-adic approach}
\label{S:p-adic outline}

For our proof of Theorem~\ref{T:global},
we embed a suitable finitely generated subfield of $k$ in a $p$-adic field
(see Section~\ref{S:proof of local and global theorems})
and apply Theorem~\ref{T:local} below,
which states that for a family of varieties with good reduction,
the jumping locus is {\em nowhere $p$-adically dense}.

\begin{setup}
\label{A:STAR}
Let $K$ be a field that is complete with respect to a discrete valuation,
and let $\kk$ be the residue field.
For any valued field $L$, let $\calO_L$ denote its valuation ring.
Assume that $\Char K = 0$ and $\Char \kk = p > 0$,
and that $\kk$ is perfect.
Let $C$ be the completion of an algebraic closure of $K$;
then $C$ also is algebraically closed \cite{Kurschak1913}*{\S46}.  
Let $B$ be an irreducible separated finite-type $\calO_K$-scheme,
and let $f\colon \calX \to B$ be a smooth proper morphism.
\end{setup}

\begin{theorem}
\label{T:local}
Assume Setup~\ref{A:STAR}.
For $b \in B(\calO_C) \subseteq B(C)$,
let $\calX_b$ be the $C$-variety above $b$.
Then the set
\[
	B(\calO_C)_{\jumping} := \{b \in B(\calO_C) :
	\rho(\calX_b)>\rho(\calX_{\bar{\eta}}) \}
\]
is nowhere dense in $B(\calO_C)$ for the analytic topology.
\end{theorem}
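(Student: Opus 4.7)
The plan is to reduce to residue discs of $B(\calO_C)$, cast the jumping condition as a $p$-adic analytic vanishing condition via the Berthelot--Ogus theorem, and conclude by the $p$-adic identity theorem together with a Baire category argument.

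Since the analytic topology on $B(\calO_C)$ is generated by residue discs $U_{\bar b_0} := \{b \in B(\calO_C) : b \text{ reduces to } \bar b_0\}$, with $\bar b_0$ ranging over $B(\overline{\kk})$, it suffices to show that $B(\calO_C)_{\jumping} \cap U_{\bar b_0}$ is nowhere dense in $U_{\bar b_0}$ for each fixed $\bar b_0$. All $b \in U_{\bar b_0}$ share the common special fiber $X_0 := \calX_{\bar b_0}$. Set $H := H^2_{\cris}(X_0/W(\overline{\kk})) \otimes_{W(\overline{\kk})} C$. The crystalline--de Rham comparison gives canonical identifications $H^2_{\dR}(\calX_{b,C}/C) \isomto H$ for each $b$, through which the Hodge filtration becomes a family of subspaces $F^1(b) \subset H$ varying $p$-adic analytically with $b$. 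Let $\Lambda \subset H$ be the (finitely generated, hence countable) image of $\NS(X_0)$, and let $V \subseteq \Lambda \otimes \Q_p$ be the $\Q_p$-span of the image of $\NS(\calX_{\bar\eta})$ under specialization to $\NS(X_0)$. Classes in $V$ are algebraic on every fiber, so $V \subseteq F^1(b)$ for all $b \in U_{\bar b_0}$, and $\dim_{\Q_p} V = \rho(\calX_{\bar\eta})$. The Berthelot--Ogus $p$-adic Lefschetz $(1,1)$ theorem, applied to each $\calX_b \to \Spec \calO_C$, gives $\rho(\calX_{b,C}) = \dim_{\Q_p}\bigl((\Lambda \otimes \Q_p) \cap F^1(b)\bigr)$, so
\[
U_{\bar b_0} \cap B(\calO_C)_{\jumping} \subseteq \bigcup_{\alpha \in \Lambda \setminus V} S_\alpha, \qquad S_\alpha := \{b \in U_{\bar b_0} : \alpha \in F^1(b)\}.
\]

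Each $S_\alpha$ is a $p$-adic analytic subset of $U_{\bar b_0}$, cut out by the vanishing of the $p$-adic power series expressing the image of $\alpha$ in the quotient $H/F^1(b)$. By the identity theorem for $p$-adic power series on a polydisc, every proper analytic subset of $U_{\bar b_0}$ is nowhere dense; thus, granted the central claim that $S_\alpha \subsetneq U_{\bar b_0}$ for every $\alpha \in \Lambda \setminus V$, the jumping locus on $U_{\bar b_0}$ is contained in a countable union of nowhere dense subsets of the Baire space $U_{\bar b_0}$, hence is meager and in particular nowhere dense.

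The main obstacle is the central claim. Suppose instead $S_\alpha = U_{\bar b_0}$ for some $\alpha \in \Lambda \setminus V$, so that $\alpha \in F^1(b)$ for every $b \in U_{\bar b_0}$. Pointwise Berthelot--Ogus then produces, for each such $b$, a line bundle $\calL_b$ on $\calX_{b,C}$ with $c_1(\calL_b) = \alpha \in H$; the Chern classes $c_1(\calL_b)$ are therefore compatible under Gauss--Manin parallel transport. The image of $U_{\bar b_0}$ in the generic fiber $B_K(C)$ is nonempty and $p$-adically open, hence Zariski-dense in the irreducible $B_K$; constructibility of the locus of fibers on which $\alpha$ is algebraic (via the relative Picard scheme) upgrades this to Zariski-openness, yielding a line bundle on a Zariski-open subset of $\calX_K$ whose fiberwise Chern class is $\alpha$. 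Restricting to the geometric generic fiber places $\alpha$ in $V$, contradicting $\alpha \notin V$. The technical heart is thus this descent---from ``Hodge on a $p$-adic residue disc'' to ``algebraic on a Zariski-open subset of the generic fiber''---combining parallel transport, pointwise Berthelot--Ogus, and the geometry of the relative Picard scheme.
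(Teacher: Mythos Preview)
Your argument has a genuine gap at the very last step: ``meager'' does not imply ``nowhere dense''. Even granting your central claim that each $S_\alpha$ with $\alpha \in \Lambda \setminus V$ is a proper analytic subset of the residue disc (hence closed and nowhere dense), their countable union is only meager; in a Baire space this forces empty interior, but the set can still be dense---witness $\Q \subset \R$. So you have not shown the jumping locus is nowhere dense.

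This is not a cosmetic issue. After the Berthelot--Ogus setup, your passage from ``each $S_\alpha$ proper'' to ``union nowhere dense'' is purely Baire-categorical and uses nothing $p$-adic; the identical reasoning over $\C$ (with the classical Lefschetz $(1,1)$ theorem in place of Berthelot--Ogus) would ``prove'' the archimedean analogue of Theorem~\ref{T:local}, which is false (Remark~\ref{R:archimedean}). A sharper toy model: the $\Z$-span of $1,x,x^2$ inside holomorphic functions on a complex disc is countable, each nonzero element has a finite (hence nowhere dense) zero set, yet the union of these zero sets is dense in $\C$. The paper's Proposition~\ref{P:evaluation is injective} is exactly the missing $p$-adic ingredient: it produces a nonempty \emph{open} subset of the polydisc on which every nonzero element of a finite-dimensional $\Q_p$-space of convergent power series is nonvanishing, via explicit ultrametric valuation estimates (and compactness of $\Z_p$) that have no archimedean analogue. (A minor separate point: your displayed formula for $\rho(\calX_{b,C})$ should involve $\Lambda \otimes \Q$ rather than $\Lambda \otimes \Q_p$, since Berthelot--Ogus tells you which \emph{integral} classes in $\NS(X_0)$ lift; the two intersections with $F^1(b)$ can have different dimensions.)
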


To prove Theorem~\ref{T:local},
we apply a ``$p$-adic Lefschetz $(1,1)$ theorem''
of P.~Berthelot and A.~Ogus~\cite{Berthelot-Ogus1983}*{Theorem~3.8}
to obtain a down-to-earth
local analytic description (Lemma~\ref{L:isocrystal output})
of the jumping locus in $B(\calO_C)$.
This eventually reduces the problem
to a peculiar statement (Proposition~\ref{P:evaluation is injective})
about linear independence of values
of linearly independent $p$-adic power series.

\begin{remark}
\label{R:archimedean}
It is well known (cf.~\cite{Bosch-Lutkebohmert-Raynaud1990}*{p.~235})
that the archimedean analogue of Theorem~\ref{T:local} is false.
For example, let $B$ be an irreducible $\C$-variety,
let $\calE \to B$ be a family of elliptic curves
such that the $j$-invariant map $j \colon B \to \Aff^1$ is dominant,
and let $\calX = \calE \times_B \calE$.
For an elliptic curve $E$ over an algebraically closed field,
$\rho(E \times E) = 2 + \rk \End E$
(cf.~the Rosati involution comment in the proof of
Proposition~\ref{P:specialization of endomorphism rings}).
So $B(\C)_{\jumping}$ is the set of CM points in $B(\C)$.
In the $j$-line, the set of CM points is
the image of $\{z \in \C : \im(z)>0 \text{ and } [\Q(z):\Q]=2\}$
under the usual analytic uniformization by the upper half plane.
This image is dense in $\Aff^1(\C)$, so its preimage under $j$ is
dense in $B(\C)$.
\end{remark}

\begin{remark}
Remark~\ref{R:archimedean} is a particular case
of a general topological density theorem for the jumping locus
(cf. \cite{Voisin2003vol2}*{5.3.4}):
In the setting of Theorem~\ref{T:global} over $k=\C$,
if the jumping locus in $B$ has a component that is
reduced and of the ``expected codimension'' $h^{2,0}$,
then the jumping locus is topologically dense in $B(\C)$.
This fact ultimately relies on the topological density of $\Q$ inside $\R$.
\end{remark}

\begin{remark}
We can give a heuristic explanation of the difference
between $\C$ and a field like $C=\C_p$.
Namely, $[\C:\R]=2$, but the analogous $p$-adic quantity $[\C_p:\Q_p]$
is infinite (in fact, equal to $2^{\aleph_0}$ \cite{Lampert1986}).
So a subvariety in $B(\C_p)$ of positive codimension can be thought of
as having infinite $\Q_p$-codimension.
This makes it less surprising that a countable union of such subvarieties
could be nowhere $p$-adically dense.
\end{remark}

\begin{remark}
G.~Yamashita, in response to an earlier version of this article,
has generalized the $p$-adic Lefschetz $(1,1)$ theorem
from the smooth case to the semistable case,
and has used our method to extend Theorem~\ref{T:local}
to the case where $\calX \to B$ is semistable \cite{Yamashita-preprint}.
\end{remark}

\begin{remark}
If $\calX \to B$ is as in Remark~\ref{R:archimedean},
but over an algebraic closure $k$ of a finite field $\F_p$,
then again we have $\rho(\calX_{\bar{\eta}}) = 3$,
but now $\rho(\calX_b) \ge 4$ for all $b \in B(k)$
since every elliptic curve $E$ over $k$
has endomorphism ring larger than $\Z$.
Thus the characteristic~$p$ analogue of Theorem~\ref{T:global} fails.
On the other hand, it seems likely that it holds
for any algebraically closed field $k$
that is not algebraic over a finite field.
\end{remark}

\subsection{Applications to abelian varieties}

J.-P.~Serre~\cite{SerreOeuvres-IV}*{pp.~1--17} 
and R.~Noot~\cite{Noot1995}*{Corollary~1.5} 
used something like Terasoma's method,
combined with G.~Faltings' proof of the Tate conjecture for 
homomorphisms between abelian varieties, to prove that 
in a family of abelian varieties over a finitely generated field
of characteristic $0$,
there exists a geometric closed fiber whose endomorphism ring
equals that of the geometric generic fiber.
Independently at around the same time, 
D.~Masser~\cite{Masser1996} used methods of transcendence theory
to give a different proof, one that gives quantitative estimates
of the number of fibers where the endomorphism ring jumps.

Theorem~\ref{T:global} 
reproves the existence result without Faltings' work or transcendence theory,
and Theorem~\ref{T:local} strengthens this by showing 
that in the $p$-adic setting,
the corresponding jumping locus is nowhere $p$-adically dense 
in the good reduction locus:

\begin{proposition}
\label{P:specialization of endomorphism rings}
Assume Setup~\ref{A:STAR},
and assume moreover that $\calX \to B$ is an abelian scheme.
Then
\[
	\{b \in B(\calO_C): \End \calX_{\bar{\eta}} \injects \End \calX_b 
		\text{ is not an isomorphism}\}
\]
is nowhere dense in $B(\calO_C)$ for the analytic topology.
\end{proposition}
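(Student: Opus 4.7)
My plan is to deduce this from Theorem~\ref{T:local} applied to the auxiliary abelian scheme $\calY := \calX \times_B \calX^\vee \to B$, where $\calX^\vee \to B$ is the dual abelian scheme.  Since $\calY \to B$ is smooth and proper, Theorem~\ref{T:local} provides a nowhere dense subset $S \subset B(\calO_C)$ outside of which $\rho(\calY_b) = \rho(\calY_{\bar{\eta}})$; by Remark~\ref{R:NS versus its rank}, for $b \notin S$ the specialization map $\NS \calY_{\bar{\eta}} \to \NS \calY_b$ is then an isomorphism.

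The bridge to endomorphisms is the classical decomposition, functorial in an abelian variety $A$ over an algebraically closed field,
\[
	\NS(A \times A^\vee) \;\isom\; \NS(A) \;\directsum\; \NS(A^\vee) \;\directsum\; \End(A),
\]
in which the first two summands are the pullbacks along the inclusions $A \times \{0\} \injects A \times A^\vee$ and $\{0\} \times A^\vee \injects A \times A^\vee$, and the $\End(A)$ summand is the common kernel of the corresponding retractions, canonically identified with $\End(A)$ via the Poincar\'e bundle on $A \times A^\vee$.  All the ingredients (the dual abelian scheme, the zero sections, and the Poincar\'e bundle) exist in the family $\calX \to B$ and commute with base change, so the decomposition above holds for both $\calY_{\bar{\eta}}$ and $\calY_b$ in a manner compatible with the specialization map.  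Consequently, for $b \notin S$ the restriction of this specialization to the $\End$-summand is exactly the map $\End \calX_{\bar{\eta}} \injects \End \calX_b$ of the proposition, and it is an isomorphism.  The set we must bound is therefore contained in $S$, and so is nowhere dense.

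The one point that needs care is the summand-by-summand compatibility of specialization with the three-term decomposition.  This should follow straightforwardly from the naturality of the retractions (restriction to fibers of the zero sections) and of the Poincar\'e bundle under the base change $\Spec \calO_C \to B$ defined by $b$; with that in hand, the proposition becomes a direct application of Theorem~\ref{T:local} to the companion family $\calX \times_B \calX^\vee$.
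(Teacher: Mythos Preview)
Your argument is correct and takes a genuinely different route from the paper's.

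The paper applies Theorem~\ref{T:local} to $\calX \times_B \calX$, not to $\calX \times_B \calX^\vee$. To relate $\NS$ to $\End$ it first shrinks $B$ so that a polarization on the generic fiber extends, and then uses that for a polarized abelian variety $A$ the space $(\NS A)_\Q$ is the Rosati-fixed part of $(\End A)_\Q$, yielding $\rho(A\times A) = 2\rho(A) + \dim_\Q(\End A)_\Q$. From this rank identity the $(\End)_\Q$ jumping locus coincides with the Picard-number jumping locus for $\calX \times_B \calX$; a separate final step (action on torsion points, using characteristic~$0$) upgrades the conclusion from $(\End)_\Q$ to $\End$.

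Your approach, via the canonical splitting $\NS(A\times A^\vee) \isom \NS(A)\directsum \NS(A^\vee)\directsum \End(A)$, sidesteps both the polarization/Rosati input and the integrality step: once $\NS \calY_{\bar\eta}\to \NS \calY_b$ is an isomorphism of groups (as Remark~\ref{R:NS versus its rank} gives), so is the summand $\End \calX_{\bar\eta}\to \End \calX_b$. The price is that you need the dual abelian scheme and the relative Poincar\'e bundle over $B$; this is standard, though (like the paper's polarization) it may require first replacing $B$ by a dense open, which is harmless for a nowhere-dense conclusion. The compatibility you flag is indeed routine: the retractions are pullbacks along sections defined over $B$, and an endomorphism of the generic fiber extends over the auxiliary DVR by rigidity, so $(\phi\times\id)^*\calP$ specializes to $(\phi_b\times\id)^*\calP_b$.
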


\begin{proof}
Choose a polarization on $\calX_\eta$, 
and replace $B$ by a dense open subvariety
to assume that it extends to a polarization of $\calX \to B$.
For a polarized abelian variety $A$ over an algebraically closed field, 
the group $(\NS A)_\Q$ is isomorphic to the subspace of $(\End A)_\Q$
fixed by the Rosati involution: 
see~\cite{MumfordAV1970}*{p.~190}, for instance.
(The subscript $\Q$ denotes $\tensor \Q$.)
This implies
\[
	\rho(A \times A) = 2 \rho(A) + \dim (\End A)_\Q.
\]
If in a family, $\dim (\End A)_\Q$ jumps, then so will $\rho(A \times A)$;
conversely, if it does not, then neither does $\rho(A)$, so $\rho(A \times A)$
also does not jump.
Thus the $(\End A)_\Q$ jumping locus for $\calX \to B$
equals the Picard number jumping locus for $\calX \times_B \calX \to B$.
Apply Theorem~\ref{T:local} to $\calX \times_B \calX$.
Finally, 
$(\End \calX_{\bar{\eta}})_\Q \injects (\End \calX_b)_\Q$
is an isomorphism if and only if 
$\End \calX_{\bar{\eta}} \injects \End \calX_b$
is an isomorphism,
as one sees by considering the action on torsion points
(this uses characteristic~$0$).
\end{proof}

\begin{remark}
Theorem~1.7 of~\cite{Noot1995} states that for any algebraic group $G$
arising as a Mumford-Tate group of a complex abelian variety,
there exists an abelian variety $A$ over a number field $F$
such that the Mumford-Tate group of $A$ equals $G$
and such that moreover the Mumford-Tate conjecture holds;
i.e., the action of $\Gal(\overline{F}/F)$ on a Tate module $T_\ell A'$
gives an {\em open} subgroup in $G(\Q_\ell)$.
A specialization result for the Mumford-Tate group follows easily
from \cite{Andre1996}*{Th\'eor\`eme 5.2(3)} too.

It would be natural to conjecture a ``nowhere dense'' analogue,
i.e., that the locus in a family of abelian varieties
where the dimension of the Mumford-Tate group drops
is nowhere $p$-adically dense in the good reduction locus.
But we know how to prove this only if we assume Conjecture~\ref{C:Emerton}
from Section~\ref{S:higher codimension}.
\end{remark}

A proof similar to that of 
Proposition~\ref{P:specialization of endomorphism rings}
yields another application of Theorem~\ref{T:global}:
\begin{proposition} 
\label{P:isogeny} 
Let $k$ be an algebraically closed field of characteristic~$0$.
Let $A$ be an abelian variety defined over $k$.
Let $B$ be an irreducible $k$-variety.
Let $\calX \to B$ be an abelian scheme such that
$\calX_b$ is isogenous to $A$ for all $b \in B(k)$.
Then $\calX_{\bar{\eta}}$ is isogenous to $A_{\bar{\eta}}:=A \times_k \bar{\eta}$.
\end{proposition}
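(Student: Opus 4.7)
I would model the argument on the proof of Proposition~\ref{P:specialization of endomorphism rings}, replacing self-products by products with the fixed abelian variety $A$. Consider the smooth proper family $\calY := \calX \times_k A \to B$: its fiber over $b \in B(k)$ is $\calX_b \times A$, and its geometric generic fiber is $\calX_{\bar{\eta}} \times A_{\bar{\eta}}$. The plan is to apply Theorem~\ref{T:global} to $\calY \to B$ to produce a closed point $b_0 \in B(k)$ with $\rho(\calY_{b_0}) = \rho(\calY_{\bar{\eta}})$.

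The second ingredient is the canonical direct-sum decomposition
\[
	\NS(X \times Y) \isom \NS X \directsum \NS Y \directsum \Hom(X, \hat{Y})
\]
for any pair of abelian varieties over an algebraically closed field. Its naturality in $X$ and $Y$ implies that the specialization map $\NS \calY_{\bar{\eta}} \to \NS \calY_{b_0}$ respects the three summands, each piece being a specialization-compatible invariant of an abelian scheme. Since $A$ is constant we have $\NS A_{\bar{\eta}} = \NS A$; combined with the injectivity of the specialization map $\NS \calX_{\bar{\eta}} \injects \NS \calX_{b_0}$ and the injectivity of $\Hom(\calX_{\bar{\eta}}, \hat A_{\bar{\eta}}) \injects \Hom(\calX_{b_0}, \hat A)$, the equality of total ranks at $b_0$ forces each summand's specialization map to preserve rank. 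In particular the $\Hom$ specialization map has torsion cokernel.

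By hypothesis $\calX_{b_0} \sim A$, and $A \sim \hat A$ via any polarization, so $\calX_{b_0} \sim \hat A$. Fix any isogeny $\phi\colon \calX_{b_0} \to \hat A$. Some positive integer multiple $n\phi$ then lies in the image of the specialization map, say $n\phi = \tilde\phi|_{b_0}$ for some $\tilde\phi \in \Hom(\calX_{\bar{\eta}}, \hat A_{\bar{\eta}})$. Spreading $\tilde\phi$ out over an \'etale neighborhood $U$ of $b_0$ gives a morphism of abelian schemes $\calX_U \to \hat A_U$ whose fiber at $b_0$ is the isogeny $n\phi$; by upper semicontinuity of fiber dimension of the kernel subgroup scheme, the kernel is finite on an open subset of $U$ containing both $b_0$ and the generic point, so $\tilde\phi$ itself is an isogeny. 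Hence $\calX_{\bar{\eta}} \sim \hat A_{\bar{\eta}} \sim A_{\bar{\eta}}$.

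The main point requiring care is the compatibility of the product decomposition of $\NS$ with the specialization maps for a varying abelian scheme, which allows the three summands to be analyzed separately; once this naturality is in hand, the rank-counting and isogeny-lifting steps are purely formal.
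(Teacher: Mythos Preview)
Your argument is correct and follows the same underlying strategy as the paper: apply Theorem~\ref{T:global} to a product family and exploit the decomposition $\NS(X\times Y)\isom \NS X \directsum \NS Y \directsum \Hom(X,\hat Y)$ together with the injectivity of specialization to control the $\Hom$ piece. The execution differs only in the final step. The paper decomposes $A\sim\prod A_i^{n_i}$ into simple isogeny factors and applies Theorem~\ref{T:global} to each $\calX\times A_i\to B$; because $A_i$ is simple, the rank of $\Hom(-,A_i)$ records exactly the multiplicity of $A_i$, so equality of $\rho$ at the good point forces the multiplicity of $(A_i)_{\bar\eta}$ in $\calX_{\bar\eta}$ to be $n_i$, and a dimension count finishes. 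You instead work with all of $A$ at once and then lift an actual isogeny from the special to the generic fiber. The paper's route is a little slicker (no isogeny--lifting needed), while yours avoids the preliminary decomposition into simple factors; both are short.

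One small imprecision worth tightening: the phrase ``spreading $\tilde\phi$ out over an \'etale neighborhood $U$ of $b_0$'' is not quite right, since $\tilde\phi$ lives over $\bar\eta$ and its spread--out locus on an \'etale cover of $B$ need not a priori contain a preimage of $b_0$. The clean fix is to argue directly over the discrete valuation ring $R$ implicit in the construction of $\spe_{\bar\eta,b_0}$: after a finite extension of $\Frac(R)$, $\tilde\phi$ extends (by the N\'eron mapping property for abelian schemes over a DVR) to a homomorphism over $R$ whose special fiber is $n\phi$; upper semicontinuity of the fiber dimension of $\ker\tilde\phi\to\Spec R$ then shows the generic kernel is finite, so $\tilde\phi$ is an isogeny.
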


\begin{proof}[Sketch of proof]
Let $A \sim \prod_{i=1}^r A_i^{n_i}$ be a decomposition of $A$ up to isogeny
into simple factors.
Applying Theorem~\ref{T:global} to $\calX \times A_i \to B$ 
shows that the multiplicity of $(A_i)_{\bar{\eta}}$ 
in the decomposition of $\calX_{\bar{\eta}}$
equals $n_i$.
Since the relative dimension of $\calX \to B$ equals $\dim A$,
this accounts for all simple factors of $\calX_{\bar{\eta}}$.
\end{proof}

\begin{remark}
At least when $\calX \to B$ is projective 
(which is automatic if $B$ is normal~\cite{Faltings-Chai1990}*{1.10(a)}),
the conclusion of Proposition~\ref{P:isogeny} 
implies that $\calX \to B$ becomes constant after a finite \'etale
base change $B' \to B$.
This can be proved as follows.
The kernel of an isogeny $A_{\bar{\eta}} \to \calX_{\bar{\eta}}$
is the base extension of a finite group scheme $G$ over $k$,
since $k$ is algebraically closed of characteristic~$0$.
Replacing $A$ by $A/G$, we may assume that $A_{\bar{\eta}} \isom \calX_{\bar{\eta}}$.
Projectivity of $\calX \to B$ yields a polarization on $\calX$,
and the corresponding polarization on $A_{\bar{\eta}}$
comes from a polarization defined over $k$.
Choose $\ell \ge 3$, and replace $B$ by a finite \'etale cover
such that $\calX[\ell] \isom (\Z/\ell\Z)^{2g}_B$.
This lets us choose level-$\ell$ structures so that
$A_{\bar{\eta}} \to \calX_{\bar{\eta}}$ becomes an isomorphism
of polarized abelian varieties with level-$\ell$ structure.
Let $\calM$ is the moduli scheme of 
polarized abelian varieties with level-$\ell$ structure.
Then $\calX$ give rise to a morphism $B \to \calM$
whose restriction to $\bar{\eta}$ agrees with a constant map.
Since $\calM$ is separated, $B \to \calM$ itself is constant.
\end{remark}

\begin{remark}
Under the appropriate hypotheses on $k$ and $\calX \to B$,
Theorem~\ref{T:local}
proves the analogous strengthening of Proposition~\ref{P:isogeny}.
\end{remark}

\subsection{Outline of the article}

After introducing some notation in Section~\ref{S:notation},
we review some standard facts about N\'{e}ron-Severi groups
and specialization maps in Section~\ref{S:NS groups}.
The next three sections prove Theorem~\ref{T:local} and use
it to prove Theorem~\ref{T:global}:
Section~\ref{S:convergent isocrystals}
discusses some basic properties of crystalline cohomology
and convergent isocrystals,
and applies them to give a local description of the jumping locus;
Section~\ref{S:unions of zero loci} proves the key $p$-adic power series
proposition to be applied to understand this local description.
Section~\ref{S:proof of local and global theorems} completes
the proofs of Theorems \ref{T:local} and~\ref{T:global}.

Section~\ref{S:projective vs proper} gives an application
of Theorem~\ref{T:global}:
if all closed fibers in a smooth proper family
are projective, then there exists a dense open subvariety of the base
over which the family is projective,
assuming that the base is a variety in characteristic~$0$.
Section \ref{S:complex outline}, which
uses only \'{e}tale and Betti cohomology, and some Hodge theory,
sketches Andr\'e's approach to Theorem~\ref{T:global},
and compares the information it provides on the jumping locus
to what is obtained from the $p$-adic approach.

Finally, Section~\ref{S:higher codimension}
explains conditional generalizations of our results
to cycles of higher codimension.
The generalization of Theorem~\ref{T:local}
is proved assuming
a $p$-adic version of the variational Hodge conjecture
(Conjecture~\ref{C:Emerton}).
We also prove that the $p$-adic variational Hodge conjecture
implies the classical variational Hodge conjecture.

\section{Notation}
\label{S:notation}

If $A$ is a commutative domain, let $\Frac(A)$ denote its fraction field.
If $A \to B$ is a ring homomorphism,
and $M$ is an $A$-module,
let $M_B$ denote the $B$-module $M \tensor_A B$.
If $k$ is a field, then $\kbar$ denotes an algebraic closure,
chosen consistently whenever possible.
Given a prime number $p$,
let $\Z_p$ be the ring of $p$-adic integers,
let $\Q_p=\Frac(\Z_p)$,
choose algebraic closures $\Qbar \subseteq \Qbar_p$,
and let $\C_p$ denote the completion of $\Qbar_p$.

For any $S$-schemes $X$ and $T$, let $X_T$ be the $T$-scheme $X \times_S T$.
For a commutative ring $R$, we may write $R$ as an abbreviation for $\Spec R$.
If $B$ is an irreducible scheme,
let $\eta$ denote its generic point.
If $b \in B$, let $\kappa(b)$ be its residue field
and let $\bar{b}=\Spec \overline{\kappa(b)}$.
For example, if $\calX \to B$ is a morphism,
then $\calX_{\bar{\eta}}$ is called the \defi{geometric generic fiber}.
Also let $\kappa(B)$ be the function field $\kappa(\eta)$.
If $B$ is a variety over a field $F$, let $|B|$ be the set of closed
points of $B$; also choose an algebraic closure $\overline{F}$ 
and for all $b \in |B|$
view $\overline{\kappa(b)}$ as a subfield of $\overline{F}$.
If $X$ is a variety over a field equipped with an embedding in $\C$,
then $X^\an$ denotes the associated complex analytic space.

If $\XX$ is a complex analytic space and $i$ is a
nonnegative integer, then we have the \defi{Betti cohomology} $\HH^i(\XX,F)$
for any field $F$.
If $X$ is a variety over a field $k$, and $i$ and $j$ are integers
with $i \ge 0$, and $\ell$ is a prime not divisible by the characteristic
of $k$,
then we have the \defi{\'{e}tale cohomology} $\HH^i_\et(X_\kbar,\Q_\ell(j))$,
which is equipped with a $\Gal(\kbar/k)$-action
(replace $\kbar$ by a separable closure if $k$ is not perfect).

\section{Basic facts on N\'{e}ron-Severi groups}
\label{S:NS groups}

\subsection{Picard groups and N\'{e}ron-Severi groups}

For a scheme or formal scheme $X$, let $\Pic X$ be its Picard group.
If $X$ is a smooth proper variety over an algebraically closed field,
let $\Pic^0 X$ be the subgroup consisting
of isomorphism classes of line bundles
algebraically equivalent to $0$ (i.e., to $\OO_X$),
and define the \defi{N\'{e}ron-Severi group} $\NS X := \Pic X / \Pic^0 X$.
The abelian group $\NS X$ is
finitely generated \cite{Neron1952}*{p.~145,~Th\'{e}or\`{e}me~2}
(see \cite{SGA6}*{XIII.5.1} for another proof),
and its rank is called the \defi{Picard number} $\rho(X)$.

\begin{proposition}
\label{P:algebraically closed base change}
If $k \subseteq k'$ are algebraically closed fields,
and $X$ is a smooth proper $k$-variety, then the natural homomorphism
$\NS X \to \NS X_{k'}$ is an isomorphism.
\end{proposition}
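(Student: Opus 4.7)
The plan is to work directly with the Picard scheme $\PIC_X$, which for smooth proper $X$ over $k$ is a $k$-group scheme locally of finite type, with identity component $\PIC^0_X$ a geometrically connected finite-type subgroup scheme. Because $k$ is algebraically closed, $\Br k = 0$, so the natural maps $\Pic X \to \PIC_X(k)$ and $\Pic^0 X \to \PIC^0_X(k)$ are bijections, and similarly over $k'$. Moreover, the formation of $\PIC_X$ and of $\PIC^0_X$ commutes with extension of the base field, giving canonical isomorphisms $\PIC_{X_{k'}} \cong (\PIC_X)_{k'}$ and $\PIC^0_{X_{k'}} \cong (\PIC^0_X)_{k'}$. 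Under these identifications, the map $\NS X \to \NS X_{k'}$ becomes the map induced by $\PIC_X(k) \to \PIC_X(k')$ after quotienting by identity components, and I must show this is bijective.

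For injectivity, I consider $[L] \in \Pic X$ with $L_{k'} \in \Pic^0 X_{k'}$. The class $[L]$ is a morphism $\Spec k \to \PIC_X$, and the hypothesis says its composite with $\Spec k' \to \Spec k$ factors through the open-and-closed subscheme $\PIC^0_X \subseteq \PIC_X$. Since $\Spec k' \to \Spec k$ is surjective on underlying points, the image of $\Spec k \to \PIC_X$ coincides with that of the composite and therefore also lies in $\PIC^0_X$, so $[L] \in \Pic^0 X$.

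For surjectivity, I take $[L'] \in \Pic X_{k'} = \PIC_X(k')$. The corresponding morphism $\Spec k' \to \PIC_X$ has image in some connected component $C \subseteq \PIC_X$ (viewed as a $k$-scheme). Because $\PIC^0_X$ is of finite type and $\PIC_X/\PIC^0_X$ is étale over $k$, the component $C$ is itself of finite type over $k$. Since $C$ is nonempty and $k$ is algebraically closed, the Nullstellensatz produces a $k$-rational point of $C$ (any closed point has residue field a finite extension of $k$, necessarily equal to $k$). The corresponding $L \in \Pic X$ lies in the same component as $L'$, so $[L_{k'}] - [L']$ lies in $\PIC^0_X(k') = \Pic^0 X_{k'}$, proving $[L] \mapsto [L']$ in $\NS X_{k'}$.

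The only substantive input is the theory of the Picard scheme of a smooth proper variety: representability (or at least as an algebraic space, which is enough), finite-type-ness and geometric connectedness of $\PIC^0_X$, and compatibility of both with base change on the field. I do not anticipate a real obstacle once these are taken for granted; the remainder is just the combination of the open-and-closed nature of $\PIC^0_X \subseteq \PIC_X$ with the Nullstellensatz. Pleasantly, no smoothness hypothesis on $\PIC^0_X$ is needed, so the argument goes through uniformly in all characteristics.
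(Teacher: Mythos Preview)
Your proposal is correct and follows essentially the same approach as the paper: both identify $\NS X$ with the component group of the Picard scheme $\PIC_{X/k}$ and use that this is invariant under algebraically closed base extension. The paper's proof is terser---it simply asserts the last step---whereas you spell out the injectivity and surjectivity using the open-and-closed nature of $\PIC^0_X$ and the Nullstellensatz, but this is just filling in the details the paper leaves implicit.
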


\begin{proof}
The Picard scheme $\PIC_{X/k}$ is a group scheme that is locally of finite type
over $k$ (this holds more generally for any
proper scheme over a field: see \cite{Murre1964}*{II.15},
which uses \cite{Oort1962}).
Then $\Pic^0 X$ is the set of $k$-points of the
identity component of $\PIC_{X/k}$ \cite{Kleiman2005}*{9.5.10}.
So $\NS X$ is the group of components of $\PIC_{X/k}$.
Thus $\NS X$ is unchanged by algebraically closed base extension.
\end{proof}

\begin{remark}
\label{R:Nakai-Moishezon}
The Nakai-Moishezon criterion \cite{Debarre2001}*{Theorem~1.21}
implies that ampleness of a Cartier divisor
on a proper scheme $X$ over any field $K$
depends only on its class in $\NS X_L$
for any algebraically closed field $L$ containing $K$.
\end{remark}

\subsection{Specialization of N\'{e}ron-Severi groups}
\label{S:specialization of NS}

\begin{proposition}[cf.~\cite{SGA6}*{X~App~7}]
\label{P:construction of specialization}
Let $R$ be a discrete valuation ring
with fraction field $K$ and residue field $k$.
Fix an algebraic closure $\Kbar$ of $K$.
Choose a nonzero prime ideal $\pp$
of the integral closure $\Rbar$ of $R$ in $\Kbar$,
so $\kbar:=\Rbar/\pp$ is an algebraic closure of $k$.
Let $X$ be a smooth proper $R$-scheme.
Then there is a natural homomorphism
\[
	\spe_{\Kbar,\kbar}\colon \NS X_\Kbar \to \NS X_\kbar.
\]
depending only on the choices above.
Moreover, if $\spe_{\Kbar,\kbar}$ maps a class $[\LL]$ to an ample class,
then $\LL$ is ample.
\end{proposition}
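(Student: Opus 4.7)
My plan is to define the specialization map by the standard lift-then-restrict procedure, and to handle well-definedness through the relative component sheaf $\mathcal{N} := \PIC_{X/R}/\PIC^0_{X/R}$. Given a class $[\LL] \in \NS X_\Kbar$ represented by a line bundle $\LL$, finite presentation lets me choose a finite subextension $K \subseteq K' \subseteq \Kbar$ and $\LL_0 \in \Pic X_{K'}$ pulling back to $\LL$. Let $R' \subseteq \Rbar$ be the integral closure of $R$ in $K'$, put $\pp' := \pp \cap R'$ (a maximal ideal of the Dedekind domain $R'$), and set $S' := R'_{\pp'}$, a DVR with residue field $k' \subseteq \kbar$. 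Since $X_{S'} \to \Spec S'$ is smooth and $S'$ is regular, the scheme $X_{S'}$ is itself regular, so the Weil-divisor excision sequence yields a surjection $\Pic X_{S'} \surjects \Pic X_{K'}$. Pick a lift $\tilde\LL \in \Pic X_{S'}$ of $\LL_0$, restrict to the closed fiber, and extend scalars to $\kbar$; define $\spe_{\Kbar,\kbar}([\LL])$ to be the class of $\tilde\LL|_{X_\kbar}$ in $\NS X_\kbar$.

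For well-definedness, I would show that all the choices ($K'$, $\LL_0$, and the lift $\tilde\LL$) produce the same class in $\NS X_\kbar$. Two lifts differ by a line bundle $\OO(D)$ for $D$ a Weil divisor supported on the components $Z_i$ of the closed fiber $X_{k'}$, so it suffices to check that each $\OO(Z_i)|_{X_\kbar}$ lies in $\Pic^0 X_\kbar$. The cleanest route is to repackage everything through $\mathcal{N}$: the relative Picard functor $\PIC_{X/R}$ is representable by an $R$-algebraic space locally of finite type (Artin, or Grothendieck in the projective case), $\PIC^0_{X/R}$ is open in $\PIC_{X/R}$, and hence $\mathcal{N}$ is étale and locally of finite type over $R$. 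Since the valuation ring $\Rbar_\pp$ is henselian (being a valuation ring of the algebraically closed field $\Kbar$) and sits between $\Kbar$ and its residue field $\kbar$, the two evaluation maps $\mathcal{N}(\Rbar_\pp) \to \mathcal{N}(\Kbar)$ and $\mathcal{N}(\Rbar_\pp) \to \mathcal{N}(\kbar)$ are bijective, and $\spe_{\Kbar,\kbar}$ is just the composition of the inverse of the first with the second; compatibility with the concrete recipe follows by chasing $\tilde\LL$ through these evaluations, and all choice-independence is then automatic.

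For the ampleness assertion, assume $\spe_{\Kbar,\kbar}([\LL])$ is an ample class on $X_\kbar$. By Remark~\ref{R:Nakai-Moishezon} the representative $\tilde\LL|_{X_\kbar}$ is itself an ample line bundle, and since ampleness of a line bundle on a proper variety is stable in both directions under field extension of the base, $\tilde\LL|_{X_{k'}}$ is ample on $X_{k'}$. Openness of the relative-ample locus for a proper morphism (EGA IV~9.6.4) then forces $\tilde\LL$ to be fiberwise ample over all of $\Spec S'$, in particular on the generic fiber $X_{K'}$; so $\LL_0$ is ample, and base-changing to $\Kbar$ gives that $\LL$ is ample. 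The main obstacle will be setting up the component space $\mathcal{N}$ with enough care---verifying representability, the openness of $\PIC^0_{X/R}$, and étaleness of the quotient in the generality of an arbitrary smooth proper $R$-scheme, and then matching the abstract specialization on $\mathcal{N}$ with the hands-on lift-and-restrict description---which is where citing SGA~6 XIII or Artin's representability theorems does the heavy lifting.
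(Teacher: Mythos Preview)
Your lift-and-restrict construction of the map on Picard groups and your ampleness argument are both fine and essentially match the paper's; the paper cites \cite{EGA-III.I}*{4.7.1} for the ampleness step rather than the openness result you name, but the content is the same.

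The gap is in the passage from $\Pic$ to $\NS$. If $\mathcal{N}:=\PIC_{X/R}/\PIC^0_{X/R}$ were \'etale over $\Spec R$ and both evaluation maps $\mathcal{N}(\Rbar_\pp)\to\mathcal{N}(\Kbar)$ and $\mathcal{N}(\Rbar_\pp)\to\mathcal{N}(\kbar)$ were bijective as you assert, then $\spe_{\Kbar,\kbar}$ would itself be a \emph{bijection}. But that is false in general: the whole paper concerns families in which $\rho(X_\kbar)>\rho(X_\Kbar)$, so $\spe$ is typically not surjective, and Section~3.3 of the paper exhibits smooth proper $R$-schemes (Enriques surfaces degenerating from type $\Z/2\Z$ to type $\alpha_2$ or $\mu_2$) for which $\spe$ even has nontrivial kernel. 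So $\mathcal{N}$ is not \'etale over $R$ in general, and no amount of care in setting it up will make it so; the fiberwise component groups simply do not assemble into an \'etale object over the base. (What SGA~6~XIII does prove is openness of $\PIC^\tau$ in $\PIC$, which is weaker than what you would need.) The paper instead checks directly that algebraic equivalence is preserved under specialization: given a smooth proper connected curve $C_K$ over $K$ parametrizing an algebraic equivalence on $X_\Kbar$, one passes to an excellent base, extends $C_K$ to a regular proper flat $R$-model via Lipman's resolution for $2$-dimensional excellent schemes, and uses geometric connectedness of its special fiber to witness the algebraic equivalence on $X_\kbar$. That concrete curve argument is what does the work your appeal to $\mathcal{N}$ was meant to do.
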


\begin{proof}
As in \cite{SGA6}*{X~App~7.8} 
or the proof of \cite{Bosch-Lutkebohmert-Raynaud1990}*{\S8.4,~Theorem~3}, 
we have
\begin{equation}
\label{E:Pic specialization}
	\Pic X_K \isomfrom \Pic X \to \Pic X_k.
\end{equation}
If $\LL$ is a line bundle on $X_K$
whose image in $\Pic X_k$ is ample,
then the corresponding line bundle on $X$ is ample relative to $\Spec R$
by \cite{EGA-III.I}*{4.7.1},
so $\LL$ is ample too.

For each finite extension $L$ of $K$ in $\Kbar$,
the integral closure $R_L$ of $R$ in $L$ is a Dedekind ring
by the Krull-Akizuki theorem~\cite{BourbakiCommutativeAlgebra}*{VII.2.\S5,~Proposition~5},
and localizing at $\pp \intersect R_L$
gives a discrete valuation ring $R_L'$.
Take the direct limit over $L$ of the analogue of \eqref{E:Pic specialization}
for $R_L'$ to get
$\Pic X_\Kbar \to \Pic X_\kbar$ (cf.~\cite{SGA6}*{X~App~7.13.3}).

This induces $\NS X_\Kbar \to \NS X_\kbar$ (cf.~\cite{SGA6}*{X~App~7.12.1});
a sketch of an alternative argument for this is as follows:
First we can pass from $R$ to its completion to reduce to the case
that $R$ is excellent.  
It suffices to show the following 
(after replacing $R$ and $K$ by finite extensions):
Given a smooth proper geometrically connected $K$-curve $C_K$
and a line bundle $\LL_K$ on $X_K \times C_K$,
any two fibers above points in $C_K(K)$
specialize to algebraically equivalent line bundles on $X_k$.
By Lipman's resolution of singularities 
for $2$-dimensional excellent schemes~\cite{Lipman1978},
$C_K$ extends to a regular proper flat $R$-scheme $C$,
and $C_k$ is geometrically connected by 
Stein factorization (cf.~\cite{EGA-III.I}*{4.3.12}).
The two specialized line bundles are fibers above points of $C(k)$
of an extension of $\LL_K$ to the regular scheme $X \times_R C$,
so they are algebraically equivalent.

The ampleness claim follows from
Remark~\ref{R:Nakai-Moishezon} and the statement for $\Pic$
already discussed.
\end{proof}

\begin{remark}
In Proposition~\ref{P:construction of specialization},
if $R$ is complete, or more generally henselian,
then there is only one choice of $\pp$.
\end{remark}

\begin{proposition}
\label{P:construction of specialization 2}
Let $B$ be a noetherian scheme.
Let $s,t \in B$ be such that $s$ is a specialization of $t$
(i.e., $s$ is in the closure of $\{t\}$).
Let $p=\Char \kappa(s)$.
Let $\calX \to B$ be a smooth proper morphism.
Then it is possible to choose a homomorphism
\[
	\spe_{\bar{t},\bar{s}}\colon
	\NS \calX_{\bar{t}} \to \NS \calX_{\bar{s}}
\]
with the following properties:
\begin{enumerate}[\upshape (a)]
\item\label{I:specialization injectivity}
If $p=0$, then $\spe_{\bar{t},\bar{s}}$ is injective
and $\coker(\spe_{\bar{t},\bar{s}})$ is torsion-free.
\item\label{I:p>0}
If $p>0$, then \eqref{I:specialization injectivity}
holds after tensoring with $\Z[1/p]$.
\item\label{I:rho inequality}
In all cases, $\rho(\calX_{\bar{s}}) \ge \rho(\calX_{\bar{t}})$.
\item\label{I:specializing ampleness}
If $\spe_{\bar{t},\bar{s}}$ maps a class $[\LL]$ to an ample class,
then $\LL$ is ample.
\end{enumerate}
\end{proposition}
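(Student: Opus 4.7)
My plan is to reduce to Proposition~\ref{P:construction of specialization} by constructing a trait through $B$ joining $s$ and $t$, and then to control the kernel and cokernel of the resulting specialization map via $\ell$-adic smooth proper base change.

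First I would construct a morphism $\Spec R \to B$ from a discrete valuation ring $R$ sending the generic point to $t$ and the closed point to $s$. Let $Z \subseteq B$ be the reduced closure of $\{t\}$; then $\calO_{Z,s}$ is a noetherian local domain with fraction field $\kappa(t)$ and residue field $\kappa(s)$. The existence of such an $R$ dominating $\calO_{Z,s}$ inside $\kappa(t)$, with residue field algebraic over $\kappa(s)$, is standard: one argues by induction on $\dim \calO_{Z,s}$, using blow-ups to reduce to the one-dimensional case, where Krull-Akizuki provides a normal Dedekind model and any localization at a maximal ideal lying over $\mathfrak{m}_{Z,s}$ is the desired DVR. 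Pulling back $\calX$ along this trait gives a smooth proper $R$-scheme $\calX_R$, to which Proposition~\ref{P:construction of specialization} applies; using Proposition~\ref{P:algebraically closed base change} to identify $\NS$ after algebraically closed extensions, this produces $\spe_{\bar{t},\bar{s}}$. Part~\eqref{I:specializing ampleness} is then immediate from the corresponding assertion in Proposition~\ref{P:construction of specialization}.

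For parts~\eqref{I:specialization injectivity} and~\eqref{I:p>0}, I would compare with $\ell$-adic cohomology. Fix a prime $\ell$ invertible on $\Spec R$ (so $\ell\ne p$ when $p>0$, and any $\ell$ when $p=0$). Smooth proper base change applied to $\calX_R\to\Spec R$ yields a canonical isomorphism
\[
\HH^2_\et(\calX_{\bar{t}},\Z_\ell(1)) \isomto \HH^2_\et(\calX_{\bar{s}},\Z_\ell(1))
\]
compatible, via the first Chern class, with $\spe_{\bar{t},\bar{s}}$. The Kummer sequence together with the divisibility of $\Pic^0$ over any algebraically closed field gives, for any smooth proper variety $X$ over such a field of residue characteristic different from $\ell$, a short exact sequence
\[
0 \to \NS X \tensor \Z_\ell \to \HH^2_\et(X,\Z_\ell(1)) \to T_\ell(\Br X) \to 0,
\]
exhibiting $\NS X\tensor\Z_\ell$ as a saturated subgroup of $\HH^2_\et(X,\Z_\ell(1))$, the cokernel being a Tate module and hence torsion-free. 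Applying this to both $\calX_{\bar{t}}$ and $\calX_{\bar{s}}$ and chasing the commutative square shows that $\spe_{\bar{t},\bar{s}}\tensor\Z_\ell$ is injective and that its image is saturated inside $\NS\calX_{\bar{s}}\tensor\Z_\ell$; in particular its cokernel is torsion-free. Letting $\ell$ range over all permissible primes, and using that $\NS$-groups are finitely generated, yields~\eqref{I:p>0}, which specializes to~\eqref{I:specialization injectivity} when $p=0$. Finally, \eqref{I:rho inequality} is immediate after $\tensor\Q$.

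The step I expect to require the most care is the compatibility of the algebraic specialization map on $\NS$, as built from Proposition~\ref{P:construction of specialization}, with the $\ell$-adic base change isomorphism. Both come from the same geometric input of the trait $\Spec R \to B$, but one must verify that extending a line bundle from $\calX_K$ to $\calX_R$ (implicit in the proof of Proposition~\ref{P:construction of specialization}) is compatible with the local constancy of $R^2 f_\ast\Z_\ell(1)$, so that first Chern classes commute with specialization. This verification is standard but is the only non-formal point in the argument.
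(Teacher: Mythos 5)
Your proposal matches the paper's proof in all essentials: construct a discrete valuation ring dominating $\calO_{\overline{\{t\}},s}$ to reduce to Proposition~\ref{P:construction of specialization}, then use the commutative diagram pairing the $\ell$-adic smooth proper base change isomorphism on $\HH^2_\et(\cdot,\Z_\ell(1))$ with the saturated inclusion $\NS X\otimes\Z_\ell\hookrightarrow\HH^2_\et(X,\Z_\ell(1))$ (cokernel $T_\ell\Br X$) to get injectivity and torsion-freeness of the cokernel, with \eqref{I:rho inequality} following after $\otimes\,\Q$ and \eqref{I:specializing ampleness} inherited from Proposition~\ref{P:construction of specialization}. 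The paper simply cites \cite{SGA6}*{X~App~7.17} for the existence of the trait and \cite{SGA6}*{7.13.10} for the Chern class compatibility that you flag as the delicate point, so the two arguments are the same modulo your spelling out some of those references.
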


\begin{proof}
A construction of $\spe_{\bar{t},\bar{s}}$
is explained at the beginning of \cite{SGA6}*{X~App~7.17}:
the idea is to choose a discrete valuation ring $R$
with a morphism $\Spec R = \{s',t'\} \to B$
mapping $s'$ to $s$ and $t'$ to $t$,
to obtain
\[
	\NS \calX_{\bar{t}}
	\isomto \NS \calX_{\bar{t}'}
	\stackrel{\spe_{\bar{t}',\bar{s}'}}\longrightarrow
	\NS \calX_{\bar{s}'}
	\isomfrom \NS \calX_{\bar{s}},
\]
with the outer isomorphisms coming from
Proposition~\ref{P:algebraically closed base change}.

For any prime $\ell \ne p$, there is a commutative diagram
\begin{equation}
\label{E:NS and H^2}
\xymatrix{
	\NS \calX_{\bar{t}} \tensor \Z_\ell \ar@{^{(}->}[r] \ar[d]_{\spe_{\bar{t},\bar{s}}} & \HH^2_\et(\calX_{\bar{t}},\Z_\ell(1)) \ar@{=}[d] \\
	\NS \calX_{\bar{s}} \tensor \Z_\ell \ar@{^{(}->}[r] & \HH^2_\et(\calX_{\bar{s}},\Z_\ell(1)) \\
}
\end{equation}
(cf.~\cite{SGA6}*{7.13.10}: there everything is tensored with $\Q$,
but the explanation shows that in our setting we need only tensor with $\Z[1/(i-1)!]$
with $i=1$).
This proves the injectivity
in \eqref{I:specialization injectivity} and~\eqref{I:p>0}.
By~\eqref{E:NS and H^2},
$\coker(\spe_{\bar{t},\bar{s}}) \tensor \Z_\ell$ is contained in
$\coker\left(\NS \calX_{\bar{t}} \tensor \Z_\ell \to \HH^2_\et(\calX_{\bar{t}},\Z_\ell(1)) \right)$.
Using the Kummer sequence,
one shows~\cite{MilneEtaleCohomology1980}*{V.3.29(d)}
that the latter is 
$T_\ell \Br \calX_{\bar{t}} := \varprojlim_n (\Br \calX_{\bar{t}})[\ell^n]$,
which is automatically torsion-free;
this proves the torsion-freeness 
in \eqref{I:specialization injectivity} and~\eqref{I:p>0}.
Finally, \eqref{I:rho inequality} follows from the earlier parts,
and \eqref{I:specializing ampleness} follows from the corresponding
part of Proposition~\ref{P:construction of specialization}.
\end{proof}

\begin{proposition}
\label{P:rho at least n}
Let $B$ be a noetherian scheme.
For a smooth proper morphism $\calX \to B$
and a nonnegative integer $n$,
define
\[
	B_{\ge n} := \{b \in B : \rho(\calX_{\bar{b}}) \ge n \}.
\]
\begin{enumerate}[\upshape (a)]
\item\label{I:countable union}
The set $B_{\ge n}$ is a countable union of Zariski closed subsets of $B$.
\item\label{I:base change of countable union} 
If we base change by a morphism $\iota \colon B' \to B$ 
of noetherian schemes,
then $B'_{\ge n} = \iota^{-1}(B_{\ge n})$.
\end{enumerate}
\end{proposition}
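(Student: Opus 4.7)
My plan is to treat the two parts separately: part (b) will follow directly from Proposition~\ref{P:algebraically closed base change}, whereas part (a) requires an analysis of the relative Picard functor of $\calX/B$.

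For part (b), I would take $b' \in B'$ with image $b := \iota(b')$, and observe that $\calX'_{\bar{b}'}$ is identified with $\calX_{\bar{b}} \times_{\overline{\kappa(b)}} \overline{\kappa(b')}$ once compatible algebraic closures $\overline{\kappa(b)} \injects \overline{\kappa(b')}$ are chosen. By Proposition~\ref{P:algebraically closed base change}, the natural map $\NS \calX_{\bar{b}} \to \NS \calX'_{\bar{b}'}$ is an isomorphism, so $\rho(\calX'_{\bar{b}'}) = \rho(\calX_{\bar{b}})$, and the equality $B'_{\ge n} = \iota^{-1}(B_{\ge n})$ falls out of the definition of $B_{\ge n}$.

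For part (a), the plan is to write $B_{\ge n}$ as a countable union of constructible subsets, each stable under specialization, and therefore Zariski closed by the standard equivalence on noetherian schemes. The key input is that the relative Picard functor $\PIC_{\calX/B}$ is representable by an algebraic space $P$ locally of finite type over $B$ (Artin), and that in the smooth proper setting $P$ admits a decomposition into countably many quasi-compact sub-algebraic-spaces $P_\alpha$ indexed by numerical invariants (for instance by Hilbert polynomials relative to a local polarization, after shrinking $B$ if necessary). Each image $Z_\alpha := \im(P_\alpha \to B)$ is constructible and closed under specialization, because specializing a line bundle preserves its numerical type---a consequence of the injectivity in Proposition~\ref{P:construction of specialization 2}\eqref{I:specialization injectivity}--\eqref{I:p>0}---hence $Z_\alpha$ is Zariski closed. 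For each $n$-tuple $(\alpha_1, \ldots, \alpha_n)$ whose numerical types are linearly independent---detectable on the level of intersection numbers against a polarization, using the non-degeneracy provided by the Hodge index theorem---the intersection $\bigcap_j Z_{\alpha_j}$ is a closed subset contained in $B_{\ge n}$. Conversely, any $b \in B_{\ge n}$ lies in some such intersection: represent $n$ independent classes in $\NS \calX_{\bar{b}}$ by line bundles (after twisting by a sufficiently positive power of a polarization to pass to effective divisors), whose numerical types will then be independent. Summing over the countably many valid tuples expresses $B_{\ge n}$ as a countable union of Zariski closed subsets, as desired.

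The main obstacle will be the absence of a global relative polarization, since $\calX \to B$ is only assumed smooth proper; my plan is to overcome this either by working Zariski-locally on $B$ (the statement is local on $B$) or, more intrinsically, by phrasing the decomposition of $P$ and the numerical independence condition directly in terms of the Picard algebraic space, using a polarization-free notion of numerical equivalence. A secondary subtlety is verifying that numerical independence of the components $P_{\alpha_j}$ forces linear independence in every fiber of $\NS$, which ultimately rests on the positivity encoded in the Hodge index theorem.
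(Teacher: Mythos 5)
Your part (b) argument matches the paper's: both reduce immediately to Proposition~\ref{P:algebraically closed base change}. But your proposed route for part (a) is quite different from the paper's, and it rests on a gap that you flag but do not close. The morphism $\calX \to B$ is only assumed \emph{proper}, and a smooth proper family need not become projective after shrinking $B$ to a Zariski-open subset --- indeed the fibers themselves need not be projective --- so neither a local relative polarization nor the attendant Hilbert-polynomial decomposition of $\PIC_{\calX/B}$ into quasi-compact pieces is available. Your fallback of a ``polarization-free notion of numerical equivalence'' is not explained, and without some such device there is no evident source of a \emph{countable} decomposition of a Picard space that is merely locally of finite type over $B$. (There is also a representability caveat: Artin's theorem for $\PIC_{\calX/B}$ as an algebraic space requires $f_*\OO_\calX = \OO_B$ universally, which does not follow from smooth properness alone.)

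The paper avoids all of this machinery via a finiteness-of-presentation reduction. The three steps are: (i) Proposition~\ref{P:construction of specialization 2}\eqref{I:rho inequality} shows that $B_{\ge n}$ is stable under specialization; (ii) by standard limit arguments and part (b), the claim reduces to the case $B = \Spec A_0$ with $A_0$ a finitely generated $\Z$-algebra, with a final patching step handling a non-affine $B$; and (iii) such a $\Spec A_0$ has only \emph{countably many points}, so the specialization-stable set $B_{\ge n}$ is automatically the countable union of the closures of its own points. The countability that your approach tries to extract from a decomposition of the Picard space is instead obtained for free from the arithmetic finiteness of the base, and no moduli space, polarization, or appeal to the Hodge index theorem is needed. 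If you insist on a Picard-functor argument, you would at minimum have to carry out the same arithmetic reduction first --- at which point the heavier machinery becomes superfluous.
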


\begin{proof}
Proposition~\ref{P:algebraically closed base change} 
proves~\eqref{I:base change of countable union}.

Now we prove~\eqref{I:countable union}.
Proposition~\ref{P:construction of specialization 2}\eqref{I:rho inequality}
says that $B_{\ge n}$ contains the closure of any point in $B_{\ge n}$.
So if $B = \Spec A$ for some finitely generated $\Z$-algebra $A$,
then $B_{\ge n}$ is the (countable) union over $b \in B_{\ge n}$
of the closure of $\{b\}$.
Combining this with \eqref{I:base change of countable union} proves 
\eqref{I:countable union} for any noetherian affine scheme.
Finally, if $B$ is any noetherian scheme,
write $B = \Union_{i=1}^n B_i$ with $B_i$ affine,
let $C_i$ be the union of the closures in $B$ of the generic points
of all the irreducible components of the closed subsets of $B_i$
appearing in the countable union for $(B_i)_{\ge n}$,
and let $C = \Union_{i=1}^n C_i$.
Then $B_{\ge n} = \Union_{i=1}^n (B_i)_{\ge n} \subseteq C$
and the opposite inclusion follows using
Proposition~\ref{P:construction of specialization 2}\eqref{I:rho inequality}
again as above.
\end{proof}

\begin{corollary}
\label{C:jumping locus is countable union}
Let $k \subseteq k'$ be algebraically closed fields.
Let $B$ be an irreducible $k$-variety.
For a smooth proper morphism $\calX \to B$, the jumping locus
\[
	B(k')_{\jumping}:= \{b \in B(k') :
		\rho(\calX_b) > \rho(\calX_{\bar{\eta}}) \}
\]
is the union of $Z(k')$ where $Z$ ranges over a countable collection
of closed $k$-subvarieties of $B$.
\end{corollary}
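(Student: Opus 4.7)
The plan is to reduce this corollary to Proposition~\ref{P:rho at least n}\eqref{I:countable union} applied to $B$, by showing that membership of $b \in B(k')$ in the jumping locus depends only on the image point of $b$ in $|B|$. Set $n_0 := \rho(\calX_{\bar{\eta}}) + 1$, so that $B(k')_{\jumping} = \{b \in B(k') : \rho(\calX_b) \ge n_0\}$.

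First I would carry out the reduction to $B_{\ge n_0}$. A point $b \in B(k')$ corresponds to a morphism $\Spec k' \to B$ with some image point $b' \in B$. Since $k'$ is algebraically closed, the induced field embedding $\kappa(b') \injects k'$ extends to $\overline{\kappa(b')} \injects k'$. Under this extension, $\calX_b$ is the base change of $\calX_{\bar{b}'}$ from $\overline{\kappa(b')}$ to $k'$, so Proposition~\ref{P:algebraically closed base change} yields
\[
\rho(\calX_b) = \rho(\calX_{\bar{b}'}).
\]
Consequently $b \in B(k')_{\jumping}$ if and only if $b' \in B_{\ge n_0}$.

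Next I would invoke Proposition~\ref{P:rho at least n}\eqref{I:countable union} to write $B_{\ge n_0} = \Union_{i \in I} |Z_i|$ for a countable family of Zariski closed subsets of $B$, each equipped with its reduced induced structure so as to be a closed $k$-subvariety of $B$. Because $\Spec k'$ has a single (reduced) point, a morphism $\Spec k' \to B$ factors through a closed subscheme of $B$ precisely when its image lies in the underlying point-set of that subscheme; applied to the $Z_i^{\red}$ this gives $\{b \in B(k') : b' \in |Z_i|\} = Z_i(k')$. Combining with the previous paragraph,
\[
B(k')_{\jumping} = \Union_{i \in I} Z_i(k'),
\]
as required.

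There is no real obstacle: Propositions \ref{P:algebraically closed base change} and~\ref{P:rho at least n} do all the substantive work, and what remains is only the set-theoretic observation that factorization of a $k'$-point through a closed subscheme is detected on underlying points. The mildest subtlety is the choice of embedding $\overline{\kappa(b')} \injects k'$, but Proposition~\ref{P:algebraically closed base change} guarantees that the Picard number is independent of this choice.
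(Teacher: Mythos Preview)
Your argument is correct and follows essentially the same route as the paper's proof: the paper applies Proposition~\ref{P:rho at least n}\eqref{I:countable union} to obtain the closed subvarieties and then invokes Proposition~\ref{P:rho at least n}\eqref{I:base change of countable union} to pass from $k$ to $k'$, whereas you inline the latter step by appealing directly to Proposition~\ref{P:algebraically closed base change} (which is exactly how \eqref{I:base change of countable union} is proved). One small notational point: in this paper $|B|$ denotes the set of \emph{closed} points (Section~\ref{S:notation}), so writing $B_{\ge n_0} = \Union_i |Z_i|$ is a slight clash; you mean the full underlying point sets of the $Z_i$, since your image point $b'$ need not be closed.
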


\begin{proof}
Proposition~\ref{P:rho at least n}\eqref{I:countable union} 
yields subvarieties $Z$
for the case $k'=k$.
The same subvarieties work for larger $k'$
by Proposition~\ref{P:rho at least n}\eqref{I:base change of countable union}.
\end{proof}

\subsection{Pathological behavior in positive characteristic}

The material in this section is not needed elsewhere in this article.
Let $R$ be a discrete valuation ring, and define $K,k,\Kbar,\kbar$
as in Section~\ref{S:specialization of NS}.
The two examples below show that $\spe_{\Kbar,\kbar}$ is not always injective.

\begin{example}
There exist $R$ of equal characteristic $2$ and 
a smooth proper morphism $X \to \Spec R$
such that $X_\Kbar$ and $X_\kbar$
are Enriques surfaces of type $\Z/2\Z$ and $\alpha_2$,
respectively \cite{Bombieri-Mumford1976}*{p.~222}.
(The type refers to the isomorphism class of the scheme $\PIC^\tau$
parametrizing line bundles numerically equivalent to $0$.)
In this case $\NS X_\Kbar \to \NS X_\kbar$ has a nontrivial kernel,
generated by the canonical class of $X_\Kbar$,
an element of order $2$.
\end{example}

\begin{example}
There exist $R$ of mixed characteristic $(0,2)$ and 
a smooth proper morphism $X \to \Spec R$
such that $X_\Kbar$ and $X_\kbar$
are Enriques surfaces of type $\Z/2\Z$ and $\mu_2$,
respectively \cite{Lang1983}*{Theorem~1.3},
so again we have a nontrivial kernel.
\end{example}

Next, we give an example showing that $\coker(\spe_{\Kbar,\kbar})$
is not always torsion-free.

\begin{example}
Let $\calO$ be the maximal order of an imaginary quadratic field
in which $p$ splits.
Let $\calO'$ be the order of conductor $p$ in $\calO$.
Over a finite extension $R$ of $\Z_p$,
there exists a $p$-isogeny $\psi \colon E \to E'$
between elliptic curves over $R$
such that $\End E_\Kbar \isom \calO$
and $\End E'_\Kbar \isom \calO'$.
Since $p$ splits, $E$ has good ordinary reduction
and $\End E_\kbar \isom \calO$.
But $\psi$ must reduce to either Frobenius or Verschiebung,
so $\End E'_\kbar \isom \calO$ too.
Using that $\coker\left(\End E'_\Kbar \to \End E'_\kbar\right)$
is of order $p$,
one can show that the cokernel of
$\NS((E' \times E')_\Kbar) \to \NS((E' \times E')_\kbar)$
contains nonzero elements of order $p$.
\end{example}

\section{Convergent isocrystals and de Rham cohomology}
\label{S:convergent isocrystals}

We now begin work toward the $p$-adic proof of Theorem~\ref{T:global}.

\subsection{Goal of this section}

\begin{definition}
\label{D:polydisk}
Assume Setup~\ref{A:STAR}.
Let $d=\dim B_K$.
Let $b$ be a smooth $\Kbar$-point on $B_K$.
If $B$ is a closed subscheme of $\Aff^n$,
a \defi{polydisk neighborhood} of $b$ is a neighborhood $U$ of $b$
in $B(\Kbar)$ in the analytic topology
equipped with, for some $\epsilon>0$, a bijection
\[
	\Delta_{d,\epsilon}:= 
	\{(z_1,\ldots,z_d) \in \Kbar^d : |z_i| \le \epsilon\} 
	\to U
\]
defined by an $n$-tuple of power series in $z_1,\ldots,z_d$
with coefficients in some finite extension of $K$.
(Such neighborhoods exist by the implicit function theorem.
If we replace the embedding $B \injects \Aff^n$, by a different one,
$B \injects \Aff^{n'}$,
the notion of polydisk neighborhood of $b$ changes,
but the new {\em system} of polydisk neighborhoods of $b$
is cofinal with the original one.)
A polydisk neighborhood of $b$ in an arbitrary $B$ is a polydisk
neighborhood of $b$ in some affine open subscheme of $B$.
Let $\scriptH(U)$ be the subring
of $\Kbar[[z_1,\ldots,z_d]]$ consisting of power series $g$
with coefficients in some finite extension of $K$
such that $g$ converges on $\Delta_{d,\epsilon}$.
\end{definition}

The goal of Section~\ref{S:convergent isocrystals} is to prove the following:

\begin{lemma}
\label{L:isocrystal output}
Assume Setup~\ref{A:STAR}.
Let $b_0 \in B(\calO_{\Kbar}) \subseteq B(\Kbar)$
be such that $B_\Kbar$ is smooth at $b_0$.
Then there exists a polydisk neighborhood $U$ of $b_0$
contained in $B(\calO_{\Kbar})$
and a finitely generated $\Z$-submodule $\Lambda \subseteq \scriptH(U)^n$
for some $n$ such that
\[
	\{ b \in U : \rho(\calX_b) > \rho(\calX_{\bar{\eta}}) \}
	= \Union_{\substack{\lambda \in \Lambda \\ \lambda \ne 0}} (\textup{zeros of $\lambda$ in $U$}).
\]
\end{lemma}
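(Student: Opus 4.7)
The plan is to use the relative de Rham cohomology of $\calX/B$ together with the Berthelot--Ogus $p$-adic Lefschetz $(1,1)$ theorem to rephrase the jumping condition as the vanishing of certain convergent $p$-adic power series on $U$. Specifically, I would consider the sheaf $\calE := R^2 f_* \Omega^{\bullet}_{\calX/B}$ on a Zariski neighborhood of $b_0$ in $B_K$; after shrinking, it is locally free of rank $N := \dim_C \HH^2_{\dR}(\calX_{b_0}/C)$, carries a Gauss--Manin connection $\nabla$, and has a Hodge subbundle $F^1 \subset \calE$ of corank $n$. After shrinking the polydisk $U$ around $b_0$ if necessary, the $p$-adic estimates for $\nabla$ guarantee a full basis of horizontal sections on $U$, giving a trivialization $\calE(U) \cong V \otimes_C \scriptH(U)$ for a $C$-vector space $V$ of dimension $N$. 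Evaluation at $b \in U$ identifies $V$ with $\HH^2_{\dR}(\calX_b/C)$, and the fiber $F^1_b \subset V$ varies analytically in $b$.

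The composition $V \hookrightarrow V \otimes \scriptH(U) \cong \calE(U) \twoheadrightarrow (\calE/F^1)(U) \cong \scriptH(U)^n$ defines a $C$-linear map $\pi \colon V \to \scriptH(U)^n$ such that for $v \in V$ and $b \in U$, $\pi(v)(b) = 0$ iff $v(b) \in F^1_b$. For each $b \in U$ the crystalline Chern class gives an injection $\NS \calX_b \hookrightarrow V$ with image $M_b \subset F^1_b$, and the $p$-adic Lefschetz $(1,1)$ theorem of Berthelot and Ogus supplies the converse: $M_b \otimes \Q_p$ is precisely $F^1_b$ intersected with the natural $\Q_p$-rational structure on $V$ coming from crystalline cohomology. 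Furthermore, horizontality of Chern classes of line bundles that extend across the family, combined with Proposition~\ref{P:construction of specialization 2}, identifies the image $M_{\bar{\eta}}$ of $\NS \calX_{\bar{\eta}}$ as a sub-$\Z$-module contained in every $M_b$ with $\pi(M_{\bar{\eta}}) = 0$ identically on $U$.

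I would then take $\Lambda_0 \subset V$ to be the $\Z$-submodule generated by $\bigcup_{b \in U} M_b$ and set $\Lambda := \pi(\Lambda_0) \subset \scriptH(U)^n$. A nonzero $\lambda \in \Lambda$ is the image of some $v \in \Lambda_0 \setminus M_{\bar{\eta}}$, and its zero locus in $U$ is exactly the set of $b \in U$ for which $v$ becomes the Chern class of a new line bundle on $\calX_b$, which forces $b$ to lie in the jumping locus; conversely, any jumping $b$ produces, via its $M_b$, a witnessing $v \in \Lambda_0 \setminus M_{\bar{\eta}}$ and a corresponding nonzero $\lambda \in \Lambda$ vanishing at $b$. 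This yields the desired equality of sets.

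The main obstacle will be ensuring that $\Lambda_0$ (equivalently $\Lambda$) is \emph{finitely generated} over $\Z$, not merely of finite $\Q$-rank. Here I would exploit that $U$ may be chosen to lie in a single formal residue disk, so every $b \in U$ has the same reduction $\overline{b_0} \in B(\overline{\kk})$ and every fiber shares the same special fiber $\calX_{\overline{b_0}}$. The Berthelot--Ogus comparison then identifies $V$ with $\HH^2_{\cris}(\calX_{\overline{b_0}}/W(\overline{\kk})) \otimes_{W(\overline{\kk})} C$ in a way that places every crystalline Chern class inside the fixed $W(\overline{\kk})$-lattice $\HH^2_{\cris}(\calX_{\overline{b_0}}/W(\overline{\kk}))$ modulo torsion. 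Combined with the deformation-invariant bound $\rho(\calX_b) \leq h^{1,1}$, this should pin down both the $\Q$-rank and the denominators of $\Lambda_0$ inside a fixed lattice, making $\Lambda_0$ finitely generated and completing the reduction to the $p$-adic power series proposition of Section~\ref{S:unions of zero loci}.
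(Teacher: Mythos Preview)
Your architecture is close to the paper's---trivialize second cohomology over a residue disk, project to $\HH^{02}$, and invoke the Berthelot--Ogus theorem---but two steps do not go through as written.

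\textbf{Finite generation.} Your proposed argument---that the crystalline Chern classes lie in the $W(\overline{\kk})$-lattice $\HH^2_{\cris}(\calX_{\overline{b_0}}/W(\overline{\kk}))$ and have bounded $\Q$-rank---does not yield finite generation over $\Z$: a $W$-lattice (indeed even $\Z_p$) is uncountable, so a subgroup of bounded $\Q$-rank inside it need not be finitely generated. The correct observation, which you almost make, is that since every $b\in U$ reduces to the same point $s\in B(\overline{\kk})$, the specialization map $\spe_{\bar b,\bar s}$ embeds each $(\NS\calX_b)_\Q$ into the single finitely generated group $(\NS\calX_{\bar s})_\Q$. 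The paper therefore defines $\Lambda$ directly as the image of $\NS\calX_{\bar s}$ under $\gamma_{02}\colon [\calL_\kk]\mapsto(\text{projection to }\HH^{02}\text{ of the constant section }c_1^{\cris}(\calL_\kk))$, and finite generation is automatic. Relatedly, the form of the $p$-adic Lefschetz $(1,1)$ theorem you need is not about a ``$\Q_p$-rational structure'' on $V$; it is the statement that for a class in $c_1^{\cris}(\NS\calX_{\bar s})$, landing in $\Fil^1$ at $b$ is equivalent to lifting (after a $p$-power) to $\calX_b$.

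\textbf{The kernel of $\pi$.} You prove $\pi(M_{\bar\eta})=0$, but for the inclusion ``$b$ jumps $\Rightarrow$ some nonzero $\lambda\in\Lambda$ vanishes at $b$'' you also need the reverse: $(\ker\gamma_{02})_\Q$ must coincide with the image of $(\NS\calX_{\bar\eta})_\Q$, equivalently $\rk\ker\gamma_{02}=\rho(\calX_{\bar\eta})$. Otherwise a witnessing class $v\in M_b\setminus(M_{\bar\eta})_\Q$ could satisfy $\pi(v)=0$ identically on $U$, producing no nonzero $\lambda$. The paper supplies this by a separate argument you do not mention: a Baire-type lemma (no countable union of lower-dimensional subschemes contains a $p$-adic neighborhood) gives a point $\beta\in U$ with $\rho(\calX_\beta)=\rho(\calX_{\bar\eta})$, and this is combined with a forward reference to Proposition~\ref{P:evaluation is injective} to pin down $\rk\ker\gamma_{02}$.
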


Its proof will be completed in Section~\ref{S:proof of lemma}.

\begin{remark}
The analogue over $\C$ is a well-known consequence
of the Lefschetz $(1,1)$ theorem~\cite{Voisin2003vol2}*{\S5.3}.
But the union will often be dense in $B(\C)$,
so this analogue is not useful for our purposes.
\end{remark}

\subsection{Coherent sheaves on $p$-adic formal schemes}

In the next two subsections,
we recall some key notions of \cite{Ogus1984},
specialized slightly to the case we need.
Assume Setup~\ref{A:STAR}.

\begin{definition}[cf.~\cite{Ogus1984}*{\S1}]
A \defi{$p$-adic formal scheme} over $\calO_K$ is
a noetherian formal scheme $T$ of finite type over $\Spf \calO_K$.
(See \cite{EGA-I}*{\S10} for basic definitions regarding formal schemes.)
\end{definition}

\begin{example}
\label{E:p-adic completion of scheme}
Let $X$ be a smooth proper scheme over $\calO_K$.
Its completion with respect to the ideal sheaf $p\OO_X$
is a $p$-adic formal scheme $\widehat{X}$.
``Formal GAGA'' states:
\begin{enumerate}[\upshape (a)]
\item
\label{I:formal GAGA coherent sheaves}
The functor $\Coh(X) \to \Coh(\widehat{X})$
mapping $\FF$ to its $p$-adic completion $\widehat{\FF}$
is an equivalence between
the categories of coherent sheaves~\cite{EGA-III.I}*{Corollaire~5.1.6}.
\item
\label{I:formal GAGA line bundles}
Under this equivalence,
line bundles on $X$ correspond to line bundles on $\widehat{X}$.
\item
\label{I:formal GAGA cohomology}
For any $\FF \in \Coh(X)$ and $q \in \Z_{\ge 0}$,
there is an isomorphism of $\calO_K$-modules
$\HH^q(X,\FF) \isom \HH^q(\widehat{X},\widehat{\FF})$~\cite{EGA-III.I}*{Corollaire~4.1.7}.
\end{enumerate}
\end{example}

We write $K \tensor \cdot$ as an abbreviation for $K \tensor_{\calO_K} \cdot$.
Similarly, $\Kbar \tensor \cdot$ means $\Kbar \tensor_{\calO_K} \cdot$.

\begin{definition}[\cite{Ogus1984}*{Definition~1.1}]
For any $p$-adic formal scheme $T$,
let $\Coh(K \tensor \OO_T)$ denote the full subcategory of
$(K \tensor \OO_T)$-modules
isomorphic to $K \tensor \FF$
for some coherent $\OO_T$-module $\FF$.
Equivalently, we could consider the category 
whose objects are coherent $\OO_T$-modules
but whose set of morphisms from $\FF$ to $\GG$ 
is $\Hom(K \tensor \FF,K \tensor \GG)$.
\end{definition}

\begin{definition}
Similarly, if $B$ is a separated finite-type $\OO_K$-scheme,
we may define $\Coh(K \tensor \OO_B)$
as the category whose objects are coherent $\OO_B$-modules
and whose set of morphisms from $\FF$ to $\GG$ 
is $\Hom(K \tensor \FF,K \tensor \GG)$.
\end{definition}

\begin{proposition}
\label{P:Coh}
Let $B$ be a separated finite-type $\OO_K$-scheme.
\begin{enumerate}[\upshape (a)]
\item\label{I:Coh of generic fiber} 
The functor $\Coh(K \tensor \OO_B) \to \Coh(B_K)$
sending $\FF$ to $\FF|_{B_K}$ is an equivalence of categories.
\item\label{I:completion functor} 
We have a functor 
$\Coh(K \tensor \OO_B) \to \Coh(K \tensor \OO_{\widehat{B}})$
sending $\FF$ to $\hat{\FF}$.
\item\label{I:pullback} 
The resulting functor 
$\Coh(B_K) \to \Coh(K \tensor \OO_{\widehat{B}})$
is compatible with pullback by an $\calO_K$-morphism $B' \to B$
on each side.
\item\label{I:vector spaces} 
If $B=\Spec \OO_K$, then the functor in~\eqref{I:pullback}
is an equivalence of categories.
\end{enumerate}
\end{proposition}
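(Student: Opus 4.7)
The plan is to handle the four parts in order, relying in each case on standard facts about coherent sheaves on noetherian schemes, localization of $\Hom$, and formal GAGA as recalled in Example~\ref{E:p-adic completion of scheme}.

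For part~\eqref{I:Coh of generic fiber}, I would verify full faithfulness and essential surjectivity of $\FF \mapsto \FF|_{B_K}$. On an affine open $\Spec A \subseteq B$, coherent sheaves correspond to finitely generated $A$-modules $M, N$; since $A$ is noetherian these are finitely presented, so $\Hom_A(M,N)_K = \Hom_{A_K}(M_K, N_K)$. Globalizing and unpacking the definition of morphisms in $\Coh(K \tensor \OO_B)$ gives full faithfulness. For essential surjectivity I would invoke the standard extension theorem for coherent sheaves: any coherent sheaf on the quasi-compact open subscheme $B_K \subseteq B$ extends to a coherent sheaf on $B$ (EGA~I (Nouvelle \'Edition), 6.9.7, or Stacks Project Tag~01PF).

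For part~\eqref{I:completion functor}, given $\FF \in \Coh(B)$, its $p$-adic completion $\hat\FF$ is coherent on $\widehat B$, so $K \tensor \hat\FF \in \Coh(K \tensor \OO_{\widehat B})$. A morphism $K \tensor \FF \to K \tensor \GG$ in $\Coh(K \tensor \OO_B)$ has the form $p^{-n}\phi$ for some honest $\phi\colon \FF \to \GG$; applying the completion functor to $\phi$ and then tensoring with $K$ supplies the desired morphism $K \tensor \hat\FF \to K \tensor \hat\GG$, and this is well-defined modulo the choice of $n$. Composing this with the inverse of the equivalence in~\eqref{I:Coh of generic fiber} yields the functor $\Coh(B_K) \to \Coh(K \tensor \OO_{\widehat B})$ appearing in~\eqref{I:pullback}. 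For compatibility with pullback, I would note that for an $\OO_K$-morphism $g\colon B' \to B$ the induced map of formal completions $\hat g\colon \widehat{B'} \to \widehat B$ satisfies $\hat g^*\hat\FF \isom \widehat{g^*\FF}$ (completion commutes with pullback of coherent sheaves in this setting), and pullback is compatible with $K \tensor$; this is essentially a naturality check.

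For part~\eqref{I:vector spaces}, the point is that when $B = \Spec \calO_K$, both categories are equivalent to the category of finite-dimensional $K$-vector spaces. Because $K$ is complete, $\widehat B = \Spf \calO_K$ and $\Coh(\widehat B)$ is the category of finitely generated $\calO_K$-modules. The functor $M \mapsto M \tensor K$ from finitely generated $\calO_K$-modules to finite-dimensional $K$-vector spaces is essentially surjective (every finite-dimensional $K$-vector space admits an $\calO_K$-lattice since $\calO_K$ is a DVR) and induces the required equivalence on morphism sets by localization of $\Hom$ for finitely presented modules.

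The only nonroutine input is the coherent extension result used in~\eqref{I:Coh of generic fiber}; parts \eqref{I:completion functor}--\eqref{I:vector spaces} then reduce to commutative algebra and unwinding of definitions, so I expect no serious obstacle beyond bookkeeping.
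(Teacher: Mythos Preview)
Your proposal is correct and matches the paper's approach: the paper simply declares everything ``straightforward'' and records for part~\eqref{I:vector spaces} the same key observation you give, namely that both categories are equivalent to finite-dimensional $K$-vector spaces. Your write-up just supplies the routine details (localization of $\Hom$, coherent extension, naturality of completion) that the paper leaves implicit.
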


\begin{proof}
This is all straightforward.
In~\eqref{I:vector spaces},
both categories are equivalent
to the category of finite-dimensional $K$-vector spaces.
\end{proof}

\begin{remark}
\label{R:analytic sheaf}
Let $B$ be a separated finite-type $\calO_K$-scheme.
The rigid-analytic generic fiber of the formal scheme $\widehat{B}$
is open in the rigid analytification $(B_K)^\an$ of $B_K$.
So, given a coherent $\OO_B$-module $\FF$
and $b_0 \in B(\calO_K) \subseteq B(K)$ such that $B_K$ is smooth at $b_0$,
we have two routes to construct the ``restriction''
of $\FF$ to a sheaf on a sufficiently small
rigid-analytic neighborhood of $b_0$:
one route goes through $\widehat{\FF}$, and the other goes
through $\FF_K$ on $B_K$.
In particular, if $\FF_K$ is locally free of rank $n$ on $B_K$,
and $U$ is a sufficiently small polydisk neighborhood of $b_0$,
then a choice of local trivialization of $\FF_K$
lets us ``restrict'' global sections of $\Kbar \tensor \widehat{\FF}$
to obtain elements of $\scriptH(U)^n$.
(Although we have used some language of rigid geometry in this remark,
it is not needed anywhere else in this article.)
\end{remark}

\subsection{Definition of convergent isocrystal}

Given a $p$-adic formal $\calO_K$-scheme $T$,
let $T_1$ be the closed subscheme defined by the ideal sheaf $p\OO_T$,
and let $T_0$ be the associated reduced subscheme $(T_1)_\red$.

\begin{definition}[cf.~\cite{Ogus1984}*{Definition~2.1}]
An \defi{enlargement} of $B_\kk$ is a $p$-adic formal $\calO_K$-scheme $T$
equipped with a $\kk$-morphism $z \colon T_0 \to B_\kk$.
A \defi{morphism of enlargements} $(T',z') \to (T,z)$
is an $\calO_K$-morphism $T' \to T$
such that the induced $\kk$-morphism $T'_0 \to T_0$
followed by $z$ equals $z'$.
\end{definition}

\begin{example}
Given $s \in B(\kk)$, let $\lceil s \rceil$
denote the enlargement $(\Spf \calO_K,\Spec \kk \stackrel{s}\to B_\kk)$ of $B_\kk$.
\end{example}

\begin{example}
If $\gamma\colon B' \to B$ is a morphism of $\calO_K$-schemes of finite type,
then we view $\widehat{B}'$ as an enlargement of $B_\kk$
by equipping it with
the $\kk$-morphism $\left(\widehat{B}'\right)_0 = (B'_\kk)_\red \to B_\kk$
induced by $\gamma$.
\end{example}

\begin{definition}[cf.~\cite{Ogus1984}*{Definition~2.7}]
\label{D:convergent isocrystal}
A \defi{convergent isocrystal} on $B_\kk$ consists of the following data:
\begin{enumerate}[\upshape (a)]
\item For every enlargement $(T,z)$ of $B_\kk$,
a sheaf $E_T \in \Coh(K \tensor \OO_T)$.
\item For every morphism of enlargements $g \colon (T',z') \to (T,z)$,
an isomorphism $\theta_g \colon g^* E_T \to E_{T'}$
in $\Coh(K \tensor \OO_T)$.
If $h \colon (T'',z'') \to (T',z')$ is another,
the cocycle condition $\theta_h \circ h^* \theta_g = \theta_{g \circ h}$
is required, and $\theta_{\id} = \id$.
\end{enumerate}
\end{definition}

\subsection{Crystalline cohomology}

\begin{definition}[cf.~\cite{Grothendieck-crystals}*{\S7} and \cite{Berthelot1974}*{III.1.1}]
Let $K$ and $\kk$ be as in Setup~\ref{A:STAR},
and let $W$ be the Witt ring of $\kk$,
so $\calO_K$ is finite as a $W$-module.
Given a smooth proper $\kk$-variety $X$ and $q \in \Z_{\ge 0}$,
we have the \defi{crystalline cohomology} $\HH^q_\cris(X/W)$,
which is a finite $W$-module.
Define
\[
	\HH^q_\cris(X/K):=K \tensor_W \HH^q_\cris(X/W).
\]
There is a \defi{Chern class} homomorphism \cite{Grothendieck-crystals}*{\S7.4}
\[
	c_1^\cris\colon \Pic X \to \HH^2_\cris(X/K).
\]
\end{definition}

\begin{remark}
\label{R:factors through NS}
The fact that crystalline cohomology is
a Weil cohomology \cite{Gillet-Messing1987}
implies \cite{Kleiman1968}*{1.2.1}
that $c_1^\cris(\LL)$ depends only
on the image of $\LL$ in $\NS X_{\overline{\kk}}$.
\end{remark}

If instead of a single $\kk$-variety
we have a family, then the $K$-vector space $\HH^2_\cris(X/K)$ is 
replaced by a compatible system of sheaves,
i.e., a convergent isocrystal:

\begin{theorem}[cf.~\cite{Ogus1984}*{Theorems 3.1 and~3.7}]
\label{T:crystalline isocrystal}
Assume Setup~\ref{A:STAR}.
For each $q \in \Z_{\ge 0}$,
there exists a convergent isocrystal 
$E := \RR^q_\cris f_* \OO_{\calX/K}$ on $B_\kk$
with an isomorphism of $K$-vector spaces
$E_{\lceil s \rceil} \isom \HH^q_\cris(\calX_s/K)$
for each $s \in B(\kk)$.
\end{theorem}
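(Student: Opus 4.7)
The plan is to deduce Theorem~\ref{T:crystalline isocrystal} from the general construction of higher direct images in the category of convergent isocrystals, as developed by Ogus. Since this is essentially a citation of \cite{Ogus1984}*{Theorems 3.1 and~3.7}, my outline focuses on indicating what gets constructed on each enlargement and why the fiber comparison at a $\kk$-point falls out.

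First I would define the candidate sheaf $E_T$ on each enlargement $(T,z\colon T_0 \to B_\kk)$. Form the base change $\calX_{T_0} := \calX_\kk \times_{B_\kk,z} T_0$, which is smooth and proper over $T_0$ since $f$ is. The relative crystalline topos $((\calX_{T_0}/T)_\cris$ has a structure sheaf $\OO_{\calX_{T_0}/T}$, and one sets
\[
  E_T := \bigl(K \tensor R^q f_{T,\cris,*}\,\OO_{\calX_{T_0}/T}\bigr),
\]
viewed as an object of $\Coh(K \tensor \OO_T)$. That this actually lives in $\Coh(K \tensor \OO_T)$ — i.e., that the underlying $\OO_T$-module, up to isogeny, is coherent — is the finiteness statement of \cite{Ogus1984}*{Theorem~3.1}, which builds on Berthelot's proper base change and finiteness for crystalline cohomology of smooth proper morphisms.

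Next I would produce the transition isomorphisms $\theta_g$. For a morphism of enlargements $g\colon (T',z') \to (T,z)$, the $\kk$-morphism $T'_0 \to T_0$ induces a Cartesian diagram
\[
\xymatrix{
  \calX_{T'_0} \ar[r] \ar[d] & \calX_{T_0} \ar[d] \\
  T'_0 \ar[r] & T_0
}
\]
and the crystalline base change theorem produces a natural map
$g^\ast R^q f_{T,\cris,*}\,\OO_{\calX_{T_0}/T} \to R^q f_{T',\cris,*}\,\OO_{\calX_{T'_0}/T'}$
which becomes an isomorphism after $\tensor K$; this is the convergent isocrystal output of Ogus \cite{Ogus1984}*{Theorem~3.7}. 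Functoriality of base change gives the cocycle condition $\theta_h \circ h^\ast \theta_g = \theta_{g \circ h}$ and $\theta_{\id}=\id$ automatically. Together these data satisfy Definition~\ref{D:convergent isocrystal}, so $E = \RR^q_\cris f_\ast \OO_{\calX/K}$ is genuinely a convergent isocrystal on $B_\kk$.

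Finally I would check the fiber identification at $\lceil s \rceil = (\Spf \calO_K, \Spec \kk \stackrel{s}\to B_\kk)$. Here $\calX_{T_0} = \calX_s$ (a smooth proper $\kk$-variety), the formal scheme $T=\Spf \calO_K$ is just a point from the point of view of crystalline cohomology, and $R^q f_{T,\cris,*}\,\OO_{\calX_s/\calO_K}$ recovers $\HH^q_\cris(\calX_s/\calO_K)$; tensoring with $K$ and using that crystalline cohomology over $\calO_K$ agrees with crystalline cohomology over $W$ after extension of scalars (since $\calO_K$ is finite and flat over $W$), one obtains the desired isomorphism $E_{\lceil s \rceil} \isom \HH^q_\cris(\calX_s/K)$. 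The main obstacle — which Ogus resolves — is proving both coherence of $E_T$ and the base change isomorphism $\theta_g$ simultaneously for \emph{all} enlargements, not only those that are smooth or $p$-adically smooth over $\Spf \calO_K$; absent this uniform statement, one would have to content oneself with working in the smooth case and then descending, losing the clean functorial picture that Lemma~\ref{L:isocrystal output} needs downstream.
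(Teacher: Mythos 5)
The paper offers no proof of this statement at all: it is a citation theorem, pointing the reader to \cite{Ogus1984}*{Theorems 3.1 and~3.7} for the construction. So there is no ``paper's proof'' to compare against; what you have written is essentially an exegesis of the citation.

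As such exegesis your outline is accurate and matches the intended reading of the reference. Defining $E_T$ as the $K$-isogeny class of $R^q f_{T,\cris,*}\,\OO_{\calX_{T_0}/T}$, invoking Ogus's coherence result (his Theorem~3.1) to place it in $\Coh(K \tensor \OO_T)$, and getting the transition isomorphisms $\theta_g$ from crystalline base change after inverting $p$ (his Theorem~3.7) is precisely how the convergent isocrystal $\RR^q_\cris f_* \OO_{\calX/K}$ is assembled; the cocycle conditions come for free from functoriality of derived pushforward under base change, as you say. Your closing observation — that the real content in Ogus is proving coherence and the base change isomorphism \emph{uniformly} over all enlargements, not merely the smooth or ``nice'' ones — is exactly the point that makes the theorem nontrivial and that the downstream Lemma~\ref{L:isocrystal output} depends on.

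One point worth stating a bit more carefully: the fiber identification $E_{\lceil s\rceil} \isom \HH^q_\cris(\calX_s/K)$ is not purely a triviality about ``points,'' because the paper's $\HH^q_\cris(\calX_s/K)$ is \emph{defined} as $K \tensor_W \HH^q_\cris(\calX_s/W)$ with $W$ the Witt ring of $\kk$, whereas the enlargement $\lceil s\rceil$ has structure ring $\calO_K$, which may be ramified over $W$. That the value of the convergent isocrystal at $\lceil s\rceil$ agrees with $K \tensor_W \HH^q_\cris(\calX_s/W)$ is itself part of Ogus's comparison (and uses that one is working up to isogeny; over $\calO_K$ itself the crystalline cohomology relative to the PD thickening $(p) \subset \calO_K$ would not generally coincide integrally with the $W$-version). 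You gesture at this with ``since $\calO_K$ is finite and flat over $W$,'' but flatness alone does not give the comparison; the correct justification is the base-change and comparison machinery of \cite{Ogus1984}, which is again exactly what the citation is for.
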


\subsection{De Rham cohomology}

\subsubsection{Algebraic de Rham cohomology}

\begin{definition}
\label{D:algebraic de Rham}
Let $f \colon \calX \to B$ be a smooth proper morphism of noetherian schemes,
and let $q \in \Z_{\ge 0}$.
\defi{De Rham cohomology} 
is defined as the coherent $\OO_B$-module $\R^q f_* \Omega^\bullet_{\calX/B}$ 
 (cf.~\cite{Grothendieck1966-DR}).
It has a \defi{Hodge filtration} in $\Coh(B)$ given by
\[
	\Fil^p( \R^q f_* \Omega^\bullet_{\calX/B} )
	:=
	\im \left( \R^q f_* \Omega^{\ge p}_{\calX/B}
	\to \R^q f_* \Omega^\bullet_{\calX/B} \right).
\]
where $\Omega^{\ge p}$ is obtained from $\Omega^\bullet$
by replacing terms in degrees less than $p$ by $0$.
Also define the coherent $\OO_B$-module
\[
	\HH^{02}(\calX):=
	\frac{\R^2 f_* \Omega^\bullet_{\calX/B}}
	{\Fil^1(\R^2 f_* \Omega^\bullet_{\calX/B})}.
\]
\end{definition}

In the case $B=\Spec K$ for a field $K$,
these $\OO_B$-modules are $K$-vector spaces,
and there is a
\defi{Chern class} homomorphism (cf.~\cite{Hartshorne1975}*{II.7.7})
\[
	c_1^\dR\colon \Pic \calX \to \HH^2_{\dR}(\calX)
		:= \RR^2 f_* \Omega^\bullet_{\calX/K}.
\]

De Rham cohomology over varieties behaves well under pullback 
in characteristic~$0$:
\begin{proposition}
\label{P:pullback of de Rham}
Let $K$ be a field of characteristic~$0$.
Let $B$ be a $K$-variety.
Let $\calX \to B$ be a smooth proper morphism.
\begin{enumerate}[\upshape (a)]
\item\label{I:Deligne locally free}
The $\OO_B$-modules $\R^q f_* \Omega^\bullet_{\calX/B}$
and $\HH^{02}(\calX)$ are locally free.
\item
Let $\alpha \colon B' \to B$ be a morphism of $K$-varieties.
Let $\calX':=\calX \times_B B'$.
Then 
\[
	\R^2 f_* \Omega^\bullet_{\calX'/B'}
	\isom \alpha^* \R^2 f_* \Omega^\bullet_{\calX/B},
\]
and this isomorphism respects the Hodge filtrations,
so we also have
\[
	\HH^{02}(\calX') \isom \alpha^* \HH^{02}(\calX).
\]
\end{enumerate}
\end{proposition}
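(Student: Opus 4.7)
The plan is to invoke two classical characteristic-zero facts about smooth proper morphisms: degeneration at $E_1$ of the relative Hodge-to-de Rham spectral sequence
\[
	E_1^{p,q} = \R^q f_* \Omega^p_{\calX/B} \Longrightarrow \R^{p+q} f_* \Omega^\bullet_{\calX/B},
\]
and local freeness of each Hodge sheaf $\R^q f_* \Omega^p_{\calX/B}$.

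For part~(a), I would first establish local freeness of the Hodge sheaves, arguing fiberwise. On each geometric fiber $\calX_b$, Deligne's degeneration theorem gives
\[
	\dim \HH^n_\dR(\calX_b) = \sum_{p+q=n} h^{p,q}(\calX_b).
\]
By the de Rham--Betti comparison, $\dim \HH^n_\dR(\calX_b)$ equals $\dim \HH^n(\calX_b^\an, \C)$ (after a reduction to the case $K \subseteq \C$), which is locally constant on $B$ because $\calX \to B$ is smooth and proper. Hence the sum of Hodge numbers is locally constant. Each $h^{p,q}$ is upper semicontinuous by Grauert's semicontinuity theorem applied to the $\OO_\calX$-module $\Omega^p_{\calX/B}$ (locally free since $f$ is smooth), so constancy of the sum forces each $h^{p,q}$ to be locally constant. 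Cohomology and base change then yields that $\R^q f_* \Omega^p_{\calX/B}$ is locally free and commutes with arbitrary base change. Next, a pointwise rank count --- the rank of $\R^n f_* \Omega^\bullet_{\calX/B}$ at $b$ equals $\dim \HH^n_\dR(\calX_b)$, which matches the sum of the ranks of the Hodge sheaves --- forces the relative Hodge spectral sequence itself to degenerate at $E_1$. Therefore $\R^n f_* \Omega^\bullet_{\calX/B}$ is a successive extension of locally free sheaves, hence locally free. The same applies to the quotient $\HH^{02}(\calX)$, which by degeneration is isomorphic to $\R^2 f_* \OO_\calX$.

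For part~(b), local freeness combined with cohomology and base change gives
\[
	\alpha^* \R^q f_* \Omega^p_{\calX/B} \isom \R^q f'_* \Omega^p_{\calX'/B'}
\]
for every $p,q$, using that $\Omega^p_{\calX'/B'}$ is the pullback of $\Omega^p_{\calX/B}$ along $\calX' \to \calX$. Since both relative Hodge spectral sequences degenerate at $E_1$, the natural map
\[
	\alpha^* \R^2 f_* \Omega^\bullet_{\calX/B} \to \R^2 f'_* \Omega^\bullet_{\calX'/B'}
\]
is a filtered map whose associated graded is an isomorphism, hence an isomorphism of filtered sheaves. Quotienting by $\Fil^1$ yields the statement for $\HH^{02}$.

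The main subtlety is avoiding circularity when deducing relative degeneration and local freeness from fiberwise degeneration: the key input is local constancy of Hodge numbers, which must be established before appealing to relative degeneration or base change for $\R^n f_* \Omega^\bullet$. The argument above circumvents this by routing through Betti cohomology on the topological side, and then applying Grauert only to the individual locally free sheaves $\Omega^p_{\calX/B}$ in each bidegree, before drawing any conclusion about the total de Rham cohomology.
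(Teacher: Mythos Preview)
Your proposal is correct and is essentially an unpacking of Deligne's Th\'eor\`eme~5.5 in \cite{Deligne1968}, which is all the paper itself invokes---the paper gives no independent argument. One step you pass over quickly deserves a word: the claim that the rank equality between $\R^n f_* \Omega^\bullet_{\calX/B}$ and $\sum_{p+q=n} \R^q f_* \Omega^p_{\calX/B}$ \emph{forces} degeneration of the relative spectral sequence is not formal over a possibly non-reduced base (the paper allows varieties to be non-reduced), since a map of locally free sheaves can vanish on every fiber yet be nonzero. The cleanest fix is to also apply your Grauert argument to the truncated complexes $\Omega^{\ge p}_{\calX/B}$---their fiberwise hypercohomology dimensions are locally constant by fiberwise degeneration---so that each $\R^n f_* \Omega^{\ge p}_{\calX/B}$ is locally free and the maps $\R^n f_* \Omega^{\ge p}_{\calX/B} \to \R^n f_* \Omega^\bullet_{\calX/B}$ are fiberwise injective between locally free sheaves, hence locally split. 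Alternatively, your rank count can be justified by reducing to an Artinian local base and comparing lengths. Either way, your outline goes through.
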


\begin{proof}
See~\cite{Deligne1968}*{Th\'eor\`eme~5.5}.  
\end{proof}

\subsubsection{Formal de Rham cohomology}

There is an analogous definition of de Rham cohomology for formal schemes:
\begin{definition}
Let $g \colon \calY \to T$ be a smooth proper morphism
of $p$-adic formal schemes
and let $q \in \Z_{\ge 0}$.
The sheaf
$K \tensor \R^q g_* \widehat{\Omega}^\bullet_{\calY/T}
\in \Coh(K \tensor \OO_T)$
has a Hodge filtration in $\Coh(K \tensor \OO_T)$ given by
\[
	\Fil^p( K \tensor \R^q g_* \widehat{\Omega}^\bullet_{\calY/T} )
	:=
	\im \left( K \tensor \R^q g_* \widehat{\Omega}^{\ge p}_{\calY/T}
	\to K \tensor \R^q g_* \widehat{\Omega}^\bullet_{\calY/T} \right).
\]
Also define the sheaf
\[
	\HH^{02}(\calY/K):=
	\frac{K \tensor \R^2 g_* \widehat{\Omega}^\bullet_{\calY/T}}
	{\Fil^1(K \tensor \R^2 g_* \widehat{\Omega}^\bullet_{\calY/T})}.	
\]
\end{definition}

In the case $T=\Spf \calO_K$, we have
\[
	c_1^\dR\colon \Pic \calY \to \HH^2_\dR(\calY/K):=
	K \tensor \R^2 g_* \widehat{\Omega}^\bullet_{\calY/T}.
\]

\subsubsection{Fibers of de Rham cohomology}

For families arising
as the $p$-adic completion of a
smooth proper morphism of $\calO_K$-schemes,
we show that taking de Rham cohomology 
commutes with restriction to fibers:
\begin{proposition}
\label{P:arising from algebra}
Let $f \colon \calX \to B$ be a smooth proper morphism
of $\calO_K$-schemes.
Let $b \in B(\calO_K)$.
and let $\calX_b$ be the pullback of $\calX$ 
by $\Spec \calO_K \stackrel{b}\to B$.
and $\calX_{b,K} = \calX_b \times K$.
Then 
\begin{enumerate}[\upshape (a)]
\item \label{I:fibers of de Rham cohomology} 
For each $q \ge 0$,
there are isomorphisms of filtered $K$-vector spaces
\[
	K \tensor \left. 
	\R^q f_* \widehat{\Omega}^\bullet_{\widehat{\calX}/\widehat{B}} \right|_b
	\to
	\HH^q_{\dR}(\widehat{\calX}/K).
\]
\item \label{I:isomorphism of H02}
There is an isomorphism of $K$-vector spaces
\[
	\left.\HH^{02}(\widehat{\calX}/K)\right|_b
	\to \HH^{02}(\widehat{\calX_b}/K).
\]
\end{enumerate}
\end{proposition}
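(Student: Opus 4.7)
My plan is to prove~\eqref{I:fibers of de Rham cohomology} by chaining together three filtered isomorphisms: first, a relative formal GAGA comparison between algebraic and formal relative de Rham cohomology over $B$; second, base change for algebraic de Rham cohomology along the section $b$; and third, absolute formal GAGA applied to the smooth proper $\calO_K$-scheme $\calX_b$. Part~\eqref{I:isomorphism of H02} will then follow by passing to quotients, using that the chain is compatible with the Hodge filtrations.

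For the first step, since $f$ is proper, Grothendieck's comparison theorem for higher direct images (the relative version of \cite{EGA-III.I}*{4.1.7}) gives, for each pair $(p,q)$, a canonical isomorphism of coherent $\OO_{\widehat{B}}$-modules
\[
	\widehat{\R^q f_* \Omega^p_{\calX/B}} \isom \R^q \widehat{f}_* \widehat{\Omega}^p_{\widehat{\calX}/\widehat{B}},
\]
and likewise after replacing $\Omega^p$ by the stupidly truncated complexes $\Omega^{\ge p}_{\calX/B}$. Because $p$-adic completion is exact on coherent sheaves over a noetherian base and is functorial in the complex of coefficients, it commutes with the formation of hypercohomology; applied to the map of complexes $\Omega^{\ge p}_{\calX/B} \to \Omega^\bullet_{\calX/B}$, this yields a canonical filtered isomorphism $\widehat{\R^q f_* \Omega^\bullet_{\calX/B}} \isom \R^q \widehat{f}_* \widehat{\Omega}^\bullet_{\widehat{\calX}/\widehat{B}}$. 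Tensoring with $K$ and invoking Proposition~\ref{P:Coh}\eqref{I:completion functor}, this realizes the filtered object $K \tensor \R^q \widehat{f}_* \widehat{\Omega}^\bullet_{\widehat{\calX}/\widehat{B}}$ in $\Coh(K \tensor \OO_{\widehat{B}})$ as the completion of the filtered object $K \tensor \R^q f_* \Omega^\bullet_{\calX/B}$ in $\Coh(K \tensor \OO_B)$.

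For the second step, Proposition~\ref{P:Coh}\eqref{I:pullback} says that restriction along $\Spec \calO_K \stackrel{b}\to B$ commutes with the completion functor, so the fiber at $b$ of the formal object equals the fiber at $b$ of the algebraic one (as filtered $K$-vector spaces, using Proposition~\ref{P:Coh}\eqref{I:vector spaces}). By Proposition~\ref{P:pullback of de Rham} applied to the generic fibers $\calX_K \to B_K$ and the induced $K$-point of $B_K$, this latter fiber is canonically and filtration-respectingly isomorphic to $\HH^q_{\dR}(\calX_{b,K}/K)$. For the third step, apply Example~\ref{E:p-adic completion of scheme}\eqref{I:formal GAGA cohomology} to the smooth proper $\calO_K$-scheme $\calX_b$, term by term in $\Omega^\bullet_{\calX_b/\calO_K}$ and $\Omega^{\ge p}_{\calX_b/\calO_K}$, and pass through the Hodge-to-de~Rham spectral sequence on both sides to obtain the filtered identification $\HH^q_{\dR}(\calX_{b,K}/K) \isom \HH^q_{\dR}(\widehat{\calX_b}/K)$. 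Composing the three filtered isomorphisms gives~\eqref{I:fibers of de Rham cohomology}, and taking the $q=2$ quotient by $\Fil^1$ gives~\eqref{I:isomorphism of H02}. The main subtlety I expect is the bookkeeping around the Hodge filtration in the first step, since $\Fil^p$ of a hypercohomology is defined as the image of a map of hypercohomologies rather than a subobject of a spectral sequence term; this is handled by applying Grothendieck's comparison theorem to the morphism of complexes $\Omega^{\ge p}_{\calX/B} \to \Omega^\bullet_{\calX/B}$ rather than to individual terms, using naturality in the coefficients.
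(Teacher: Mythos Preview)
Your proof is correct and follows essentially the same strategy as the paper's: reduce to the algebraic base-change isomorphism of Proposition~\ref{P:pullback of de Rham}, and transport it to the formal side via completion. The paper's own proof is considerably terser---it simply invokes Proposition~\ref{P:pullback of de Rham} for the algebraic map $(\R^q f_* \Omega^\bullet_{\calX_K/B_K})|_{b_K} \to \HH^q_{\dR}(\calX_{b,K})$ and then says ``apply the functor in Proposition~\ref{P:Coh}\eqref{I:vector spaces}''---whereas you unpack that last step into explicit formal GAGA comparisons (your Steps~1 and~3) on both the relative and absolute sides. This is not a different route, just a more detailed accounting of what ``apply the functor'' entails; your care with the filtration (applying the comparison to the map of complexes $\Omega^{\ge p} \to \Omega^\bullet$ rather than term by term) is a genuine point that the paper leaves implicit.
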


\begin{proof}
\hfill
\begin{enumerate}[\upshape (a)]
\item 
The algebraic analogue of this isomorphism, namely
\[
	\left. \left( \R^q f_* \Omega^\bullet_{\calX_K/B_K} \right) \right|_{b_K}
	\to \HH^q_{\dR}(\calX_K),
\]
is an isomorphism of filtered $K$-vector spaces 
by Proposition~\ref{P:pullback of de Rham} 
with $\alpha = (b)_K \colon \Spec K \to B_K$.
Now apply the functor in Proposition~\ref{P:Coh}\eqref{I:vector spaces}.
\item
This follows from~\eqref{I:fibers of de Rham cohomology} for $q=2$.
\end{enumerate}
\end{proof}

\subsection{Comparison and the $p$-adic Lefschetz $(1,1)$ theorem}

The following result identifies crystalline and de Rham cohomologies,
even in the family setting.

\begin{theorem}
\label{T:dR cris comparison}
Assume Setup~\ref{A:STAR}.
Let $(T,z)$ be an enlargement of $B_\kk$.
Let $f_0\colon \calX_0 \to T_0$ be obtained from $f \colon \calX \to B$
by base change along $z \colon T_0 \to B_\kk \injects B$.
Let $g \colon \calY \to T$ be a smooth proper lifting of $f_0$.
Then for each $q \in \Z_{\ge 0}$ there is a canonical isomorphism
\begin{equation}
\label{E:crystalline de Rham comparison}
	\sigma_{\cris,T} \colon
	K \tensor \R^q g_* \widehat{\Omega}^\bullet_{\calY/T}
	\to \left( \RR^q_\cris f_* \OO_{\calX/K} \right)_T.
\end{equation}
Moreover, if $t \in T(\calO_K)$
and $s = z(t(\Spec \kk)) \in B(\kk)$,
then the isomorphism $\sigma_{\cris,t}$ induced by $\sigma_{\cris,T}$
on the fibers above $t$
fits in a commutative diagram
\begin{equation}
\label{E:commutative square}
\xymatrix{
	\Pic \calY_t \ar[d]_{c_1^\dR} \ar[r] & \Pic \calX_s \ar[d]^{c_1^\cris} \\
       \HH^2_\dR(\calY_t/K) \ar[r]^{\sigma_{\cris,t}} & \HH^2_\cris(\calX_s/K).
}
\end{equation}
\end{theorem}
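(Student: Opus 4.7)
The plan is to derive both assertions from Ogus's construction \cite{Ogus1984} of the convergent isocrystal $\RR^q_\cris f_*\OO_{\calX/K}$. That construction is essentially the recipe on the right-hand side of \eqref{E:crystalline de Rham comparison}: given an enlargement $(T,z)$ of $B_\kk$ and a smooth proper lift $g\colon \calY\to T$ of the base change $f_0\colon \calX_0\to T_0$, one sets $E_T := K\tensor \RR^q g_*\widehat{\Omega}^\bullet_{\calY/T}$ and uses the crystalline Poincar\'e lemma together with infinitesimal deformation theory to prove independence (up to canonical isomorphism) of the chosen lift. The transition maps $\theta_h$ across morphisms of enlargements assemble these $E_T$'s into the convergent isocrystal of Theorem~\ref{T:crystalline isocrystal}. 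Thus the canonical isomorphism $\sigma_{\cris,T}$ in \eqref{E:crystalline de Rham comparison} is essentially built into that construction; what must be verified is naturality in the enlargement $(T,z)$ and compatibility with Ogus's definition, both of which are part of the cited work.

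For the commutativity of \eqref{E:commutative square}, I would first apply Proposition~\ref{P:arising from algebra}\eqref{I:fibers of de Rham cohomology} to identify the fiber of $K\tensor \RR^2 g_*\widehat{\Omega}^\bullet_{\calY/T}$ above $t$ with $\HH^2_\dR(\calY_t/K)$. The morphism of enlargements $\lceil s\rceil \to T$ induced by $t$ has the form of a smooth proper $\calO_K$-lift: the special fiber $(\calY_t)_\kk$ is canonically $\calX_s$, and the restriction of $\sigma_{\cris,T}$ along this morphism becomes the standard Berthelot--Ogus comparison isomorphism (\cite{Berthelot-Ogus1983}) attached to $\calY_t$ as a lift of $\calX_s$. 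With this translation in place, commutativity of \eqref{E:commutative square} reduces to the classical statement that this comparison carries the de Rham first Chern class of a line bundle on $\calY_t$ to the crystalline first Chern class of its restriction to $\calX_s$.

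This last compatibility is a standard fact; one proof is to realize both Chern classes as coming from the same connecting homomorphism, namely the boundary in a short exact sequence of the form $0\to 1+p\OO_{\widehat{\calY}_t}\to \OO_{\widehat{\calY}_t}^\times\to \OO_{\calX_s}^\times\to 0$ composed with the $p$-adic logarithm $1+p\OO_{\widehat{\calY}_t}\to \OO_{\widehat{\calY}_t}$, the resulting class being visible simultaneously in $\HH^2_\dR(\calY_t/K)$ and, via the crystalline interpretation of the lift, in $\HH^2_\cris(\calX_s/K)$. The main obstacle, such as it is, is bookkeeping: pinning down the identifications precisely enough to recognize that the value of $\RR^q_\cris f_*\OO_{\calX/K}$ at the enlargement induced by $t$ is, as a $K$-vector space equipped with a Chern class map, none other than the de Rham cohomology of $\calY_t$ with its de Rham Chern class. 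All the necessary naturality statements are established in \cite{Ogus1984} and \cite{Berthelot-Ogus1983}, so the proof ultimately consists of assembling the correct references.
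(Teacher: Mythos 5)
Your proof takes essentially the same approach as the paper: the paper's entire argument is the pair of citations — \cite{Ogus1984}*{Theorem~3.8.2} for the comparison isomorphism \eqref{E:crystalline de Rham comparison}, and \cite{Berthelot-Illusie1970}*{2.3} together with \cite{Berthelot-Ogus1983}*{3.4} for the commutativity of \eqref{E:commutative square} — which is exactly what you do, with some added expository unwinding of what those references establish. The only small mismatch is that you do not explicitly point to Berthelot–Illusie for the Chern-class compatibility (it is the primary source for that step), but your identification of the result as a reference-assembly exercise and of the right ingredients is correct.
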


\begin{proof}
For \eqref{E:crystalline de Rham comparison}, 
see~\cite{Ogus1984}*{Theorem~3.8.2}.
For \eqref{E:commutative square}, see
\cite{Berthelot-Illusie1970}*{2.3} and~\cite{Berthelot-Ogus1983}*{3.4}.
\end{proof}

Finally, we have what one might call a $p$-adic analogue
of the Lefschetz $(1,1)$ theorem:

\begin{theorem}[cf.~\cite{Berthelot-Ogus1983}*{Theorem~3.8}]
\label{T:p-adic Lefschetz 1,1 theorem}
Let $X \stackrel{g}\to \Spf \calO_K$
be a smooth proper $p$-adic formal scheme.
Let $\LL_\kk$ be a line bundle on $X_\kk$.
Then the following are equivalent:
\begin{enumerate}[\upshape (a)]
\item There exists $m$ such that $\LL_\kk^{\tensor p^m}$
lifts to a line bundle on $X$.
\item
The element of $\HH^2_\dR(X/K):=K \tensor \R^q g_* \widehat{\Omega}^\bullet_{X/\calO_K}$
corresponding under $\sigma_{\cris,\calO_K}$ to $c_1(\LL_\kk) \in \HH^2_\cris(X_\kk/K)$
maps to $0$ in the quotient $\HH^{02}(X/K)$.
\end{enumerate}
\end{theorem}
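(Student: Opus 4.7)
The plan is to prove both directions, with (a)$\Rightarrow$(b) being a formal consequence of the compatibility of Chern classes with the crystalline-to-de-Rham comparison, while (b)$\Rightarrow$(a) requires identifying the lifting obstruction with the image of $c_1^\cris$ in $\HH^{02}$.

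For (a)$\Rightarrow$(b), suppose $\LL_\kk^{\tensor p^m}$ extends to a line bundle $\calL$ on $X$. The compatibility square~\eqref{E:commutative square}, applied with $\calY = X$ and the natural enlargement structure on $X_\kk \injects X$, yields $\sigma_{\cris,\calO_K}(c_1^\cris(\LL_\kk^{\tensor p^m})) = c_1^\dR(\calL) \in \HH^2_\dR(X/K)$.  By construction, $c_1^\dR(\calL)$ is the image of the class $d\log \calL \in \R^1 g_* \widehat{\Omega}^1_{X/\calO_K}$ under the edge map into $\R^2 g_* \widehat{\Omega}^\bullet_{X/\calO_K}$ (cf.\ Definition~\ref{D:algebraic de Rham}), and hence lies in $\Fil^1$; thus its image in $\HH^{02}(X/K)$ is zero.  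Dividing by the nonzero scalar $p^m$ in this $K$-vector space gives the same vanishing for the image of $c_1^\cris(\LL_\kk)$.

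For (b)$\Rightarrow$(a), the strategy is to express the lifting obstruction cohomologically via the $p$-adic exponential sequence on the formal scheme $X$:
\[
	0 \to 1 + p\OO_X \to \OO_X^\times \to \OO_{X_\kk}^\times \to 0.
\]
The $p$-adic logarithm furnishes an isomorphism $1 + p\OO_X \isom p\OO_X \isom \OO_X$ of sheaves of abelian groups (after minor adjustments at $p=2$, e.g.\ working with $1 + 4\OO_X$, which pose no essential difficulty), so the connecting map of the associated long exact sequence reads
\[
	\delta \colon \Pic X_\kk = \HH^1(X_\kk, \OO_{X_\kk}^\times) \to \HH^2(X, \OO_X),
\]
and $\LL_\kk$ lifts to a line bundle on $X$ precisely when $\delta(\LL_\kk) = 0$.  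Since $\delta$ is a homomorphism of abelian groups, $\delta(\LL_\kk^{\tensor p^m}) = p^m \delta(\LL_\kk)$; hence some $\LL_\kk^{\tensor p^m}$ lifts if and only if $\delta(\LL_\kk)$ is $p$-power torsion, if and only if its image in $K \tensor \HH^2(X, \OO_X)$ vanishes.

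The main obstacle is the final step: identifying $K \tensor \HH^2(X, \OO_X)$ with $\HH^{02}(X/K)$ and verifying that under this identification,
\[
	\delta(\LL_\kk) \;=\; \textrm{image of } \sigma_{\cris,\calO_K}(c_1^\cris(\LL_\kk)) \textrm{ in } \HH^{02}(X/K),
\]
up to a unit arising from the $\log$/$\exp$ normalization.  The first identification follows from Hodge-to-de-Rham degeneration on the rigid-analytic generic fiber of $X$ (or more concretely, from the definition of $\HH^{02}$ as a quotient of $\R^2 g_* \widehat{\Omega}^\bullet$ together with the long exact sequence attached to the stupid truncation $\widehat{\Omega}^{\geq 1} \to \widehat{\Omega}^\bullet \to \OO_X$).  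The second, namely the compatibility of $\delta$ with $c_1^\cris$ under $\sigma_\cris$, is the technical heart of~\cite{Berthelot-Ogus1983}*{Theorem~3.8}: it requires unwinding $c_1^\cris$ via the divided-power logarithm on the crystalline site and $\delta$ via the truncated exponential on the formal site, and showing that $\sigma_{\cris,\calO_K}$ intertwines them.  Given this matching, condition (b) forces $\delta(\LL_\kk)$ to be torsion, whence some $\LL_\kk^{\tensor p^m}$ lifts, completing the proof.
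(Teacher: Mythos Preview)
The paper does not supply its own proof of this theorem: it is quoted directly from \cite{Berthelot-Ogus1983}*{Theorem~3.8} and used as a black box, so there is nothing in the paper to compare your argument against.

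That said, your sketch is a faithful outline of the Berthelot--Ogus argument. The direction (a)$\Rightarrow$(b) is indeed formal from the compatibility of $c_1^\dR$ with the Hodge filtration and the commutative square~\eqref{E:commutative square}. For (b)$\Rightarrow$(a), your use of the multiplicative-to-additive passage via the $p$-adic logarithm and the identification of the lifting obstruction in $K\tensor\HH^2(X,\OO_X)\simeq\HH^{02}(X/K)$ with the image of $c_1^\cris(\LL_\kk)$ under $\sigma_{\cris}$ is exactly the structure of the cited proof. You are right to flag the compatibility of $\delta$ with $c_1^\cris$ through $\sigma_{\cris}$ as the substantive step; in \cite{Berthelot-Ogus1983} this is carried out by unwinding the crystalline Chern class via the divided-power logarithm and matching it against the formal obstruction, which you describe accurately but do not perform. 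One minor point: your claim that ``$\delta(\LL_\kk)$ is $p$-power torsion iff its image in $K\tensor\HH^2(X,\OO_X)$ vanishes'' uses implicitly that $\HH^2(X,\OO_X)$ is a finitely generated $\calO_K$-module (so all torsion is $p$-power torsion); this follows from properness of $g$ but is worth stating.
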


All the above definitions and results are compatible with
respect to base change from $\calO_K$ to $\calO_L$
for a finite extension $L$ of $K$ \cite{Ogus1984}*{3.6, 3.9.1, 3.11.1}.

\subsection{Proof of Lemma~\ref{L:isocrystal output}}
\label{S:proof of lemma}

By smoothness, there is a unique irreducible component of $B_\Kbar$ containing
the point of $B(\Kbar)$ corresponding to $b_0$.
Replace $K$ by a finite extension
so that $b_0$ and this component are defined over $K$,
and replace $B$ by the closure of this component.
Then replace $B$ by an open subscheme
to assume that $B$ is a closed subscheme of
$\Aff^r = \Spec \calO_K[x_1,\ldots,x_r]$
for some $r$.
Translate so that $b_0$ is the origin in $\Aff^r$.
Let $s \in B(\kk)$ be the reduction of $b_0$, the origin in $\Aff^r(\kk)$.

Let $\wp \colon \Aff^r \to \Aff^r$
be the morphism induced by the $\calO_K$-algebra homomorphism
mapping each variable $x_i$ to $p x_i$.
Let $B' = \wp^{-1}(B)$.
Let $b_0' \in B'(\calO_K)$ be the origin, so $\wp(b_0')=b_0$.
Let $T = \widehat{B'}$.
Pulling back $f \colon \calX \to B$
yields a morphism of $\calO_K$-schemes $\calX' \to B'$.
Completing yields
a morphism of $p$-adic formal $\calO_K$-schemes $\calX_T \to T$.
We write $f$ for any of these.

Let $E$ be the convergent isocrystal $\RR^2_\cris f_* \OO_{\calX/K}$ on $B_\kk$
given by Theorem~\ref{T:crystalline isocrystal}.
Because the special fiber of $T$ maps to $s \in B(\kk)$,
we have a morphism of enlargements $T \to \lceil s \rceil$,
so the definition of convergent isocrystal gives an identification
\[
	E_T \isom E_{\lceil s \rceil} \tensor_{\calO_K} \OO_T
	\isom \HH^2_\cris(\calX_s/K) \tensor_{\calO_K} \OO_T,
\]
and the latter is a trivialized sheaf in $\Coh(K \tensor \OO_T)$.

Let $\LL_\kk$ be a line bundle on $\calX_s$.
Then $c_1^\cris(\LL_\kk) \in \HH^2_\cris(X_s/K)$
gives rise to a constant section 
$\gamma_\cris(\LL_\kk) := c_1^\cris(\LL_\kk) \tensor 1
\in \HH^2_\cris(\calX_s/K) \tensor_{\calO_K} \OO_T \isom E_T$.
Applying $\sigma_{\cris,T}^{-1}$ yields
a section $\gamma_\dR(\LL_\kk)$ of
$K \tensor \R^2 f_* \widehat{\Omega}^\bullet_{\calX_T/T}$,
which can be mapped to a section $\gamma_{02}(\LL_\kk)$
of the quotient sheaf $\HH^{02}(\calX_T/K)$.

Let $b' \in B'(\calO_K)$.
Let $\calX_{b'}$ be the $\calO_K$-scheme
obtained by pulling back $\calX \to B$
by the composition $\Spec \calO_K \stackrel{b'}\to B' \to B$.
Let $\calX_{b',K} = \calX_{b'} \times K$.
We can ``evaluate''
$\gamma_\cris(\LL_\kk)$, $\gamma_\dR(\LL_\kk)$, and $\gamma_{02}(\LL_\kk)$ at $b'$
by pulling back via $\Spf \calO_K \stackrel{b'}\to T$
to obtain values in $K$-vector spaces
\begin{align*}
	\gamma_\cris(\LL_\kk,b') &\in \HH^2_\cris(\calX_s/K), \\
	\gamma_\dR(\LL_\kk,b') &\in \HH^2_\dR(\widehat{\calX_{b'}}/K),\text{ and}\\
	\gamma_{02}(\LL_\kk,b') &\in \HH^{02}(\widehat{\calX_{b'}}/K).
\end{align*}
Because the composition of enlargements
$\widehat{b'} \to T \to \lceil s \rceil$
is the identity, the cocycle condition
in Definition~\ref{D:convergent isocrystal}
yields $\gamma_\cris(\LL_\kk,b') = c_1^\cris(\LL_\kk)$.

Everything so far has been compatible with base extension from $\calO_K$
to $\calO_L$ for a finite extension $L$ of $K$,
and we may take direct limits to adapt the definitions and results
above to $\calO_{\Kbar}$.

Proposition~\ref{P:construction of specialization 2}\eqref{I:p>0}
gives an injective homomorphism
\[
	\spe_{\bar{b}',\bar{s}} \colon
	(\NS \calX_{b',\Kbar})_\Q
	\injects
	(\NS \calX_{\bar{s}})_\Q.
\]

\begin{claim}
\label{C:lifting criterion 1}
The class $[\LL_\kk] \in (\NS \calX_{\bar{s}})_\Q$
is in the image of $\spe_{\bar{b}',\bar{s}}$
if and only if $\gamma_{02}(\LL_\kk,b')=0$.
\end{claim}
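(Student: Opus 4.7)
My strategy is to apply the $p$-adic Lefschetz $(1,1)$ theorem (Theorem~\ref{T:p-adic Lefschetz 1,1 theorem}) to the smooth proper formal scheme $\widehat{\calX_{b'}}$ over $\calO_K$ (or over $\calO_L$ for a suitable finite extension $L/K$, as permitted by the base change compatibilities noted above). The key bridge is that by the construction of the various $\gamma$'s together with Proposition~\ref{P:arising from algebra}\eqref{I:isomorphism of H02}, the element $\gamma_{02}(\LL_\kk,b') \in \HH^{02}(\widehat{\calX_{b'}}/K)$ is exactly the image in $\HH^{02}$ of $\sigma_{\cris,b'}^{-1}(c_1^\cris(\LL_\kk))$. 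Hence Theorem~\ref{T:p-adic Lefschetz 1,1 theorem} shows that $\gamma_{02}(\LL_\kk,b') = 0$ is equivalent to the existence of $m \ge 0$ such that $\LL_\kk^{\tensor p^m}$ lifts to a line bundle on $\widehat{\calX_{b'}}$, and the claim reduces to the equivalence between this lifting condition and the $(\NS)_\Q$-image condition.

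For the \emph{``if'' direction}, given $\gamma_{02}(\LL_\kk,b') = 0$, Berthelot--Ogus produces a lift $\MM$ of $\LL_\kk^{\tensor p^m}$ to the formal scheme $\widehat{\calX_{b'}}$. Since $\calX_{b'}$ is smooth and proper over $\calO_L$, formal GAGA (Example~\ref{E:p-adic completion of scheme}\eqref{I:formal GAGA line bundles}) algebraizes $\MM$ to a genuine line bundle on $\calX_{b',\calO_L}$. Its generic fiber defines a class in $\NS \calX_{b',\Kbar}$ whose specialization, by the construction of Proposition~\ref{P:construction of specialization}, is represented by the reduction $\LL_\kk^{\tensor p^m}$. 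Thus $\spe_{\bar b',\bar s}([\MM_{\Kbar}]) = p^m [\LL_\kk]$ in $\NS \calX_{\bar s}$, and after inverting $p$ we obtain $[\LL_\kk] = p^{-m} \spe_{\bar b',\bar s}([\MM_{\Kbar}])$ in the image.

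For the \emph{``only if'' direction}, suppose $[\LL_\kk]$ lies in the image, so there exist a positive integer $N$ and a class $[\MM] \in \NS \calX_{b',\Kbar}$ with $\spe_{\bar b',\bar s}([\MM]) = N[\LL_\kk]$. After enlarging $L$, realize $[\MM]$ by an actual line bundle on $\calX_{b',L}$ and extend it to $\widetilde\MM$ on $\calX_{b',\calO_L}$ via the isomorphism $\Pic\calX_{b',\calO_L}\isomto\Pic\calX_{b',L}$ from Proposition~\ref{P:construction of specialization}. By construction of $\spe$, the reduction $\widetilde\MM_\kk$ has class $N[\LL_\kk]$ in $\NS \calX_s$, so by Remark~\ref{R:factors through NS} we have $c_1^\cris(\widetilde\MM_\kk) = N\cdot c_1^\cris(\LL_\kk)$. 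The commutative diagram~\eqref{E:commutative square} (applied with $\calY = \widehat{\calX_{b',\calO_L}}$ and $t = b'$) then identifies this with $\sigma_{\cris,b'}(c_1^\dR(\widetilde\MM|_{\widehat{\calX_{b'}}}))$. Since the de Rham Chern class always lies in $\Fil^1$, its image in $\HH^{02}(\widehat{\calX_{b'}}/K)$ vanishes; unwinding the definitions gives $N\cdot\gamma_{02}(\LL_\kk,b') = 0$ in the $K$-vector space $\HH^{02}(\widehat{\calX_{b'}}/K)$, hence $\gamma_{02}(\LL_\kk,b') = 0$.

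The main obstacle is reconciling the integral ``lift up to $p^m$-power'' yielded by Berthelot--Ogus with the rational ``in the image of $\spe$'' phrasing of the claim: passing to $\Q$-coefficients absorbs both the $p^m$ denominator appearing on one side and the integer multiplier $N$ appearing on the other. A secondary bookkeeping point is verifying that evaluation at $b'$ (of the convergent isocrystal $E$, of the de Rham sheaf and its Hodge filtration, and of the comparison isomorphism $\sigma_{\cris,T}$) really does yield the corresponding invariants on $\widehat{\calX_{b'}}$, but this is precisely what Propositions~\ref{P:Coh} and \ref{P:arising from algebra} together with the functoriality in Theorem~\ref{T:dR cris comparison} are designed to supply.
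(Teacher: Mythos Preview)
Your proposal is correct and follows essentially the same approach as the paper's proof: both directions hinge on Theorem~\ref{T:p-adic Lefschetz 1,1 theorem} applied to $\widehat{\calX_{b'}}$, together with formal GAGA and the commutative square~\eqref{E:commutative square}. The only cosmetic differences are that the paper, in the ``only if'' direction, modifies $\LL_\kk$ (by a tensor power and a twist by a line bundle algebraically equivalent to zero) so that it is literally a specialization in $\Pic$, whereas you carry the integer $N$ through and divide at the end; and the paper cites Theorem~\ref{T:p-adic Lefschetz 1,1 theorem} for the vanishing in $\HH^{02}$ where you directly invoke $c_1^\dR \in \Fil^1$, which amounts to the same thing.
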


\begin{proof}
Suppose that $[\LL_\kk]$ is in the image of $\spe_{\bar{b}',\bar{s}}$.
After replacing $\LL_\kk$ by a tensor power,
replacing $K$ by a finite extension,
and tensoring $\LL_\kk$ with a line bundle algebraically equivalent to $0$
(which, by Remark~\ref{R:factors through NS}, does not change any of
the sections and values $\gamma_{\cdot}(\cdot)$),
we may assume that the isomorphism class of $\LL_\kk$ in $\Pic \calX_s$
is the specialization of the isomorphism class of some
line bundle $\LL_K$ on $\calX_{b',K}$.
Lift $\LL_K$ to a line bundle $\LL$ on the $\calO_K$-scheme $\calX_{b'}$.
Completing yields $\widehat{\LL} \in \Pic \widehat{\calX_{b'}}$.
The commutative diagram in Theorem~\ref{T:dR cris comparison}
shows that the element
$c_1^\dR(\widehat{\LL}) \in \HH^2_\dR(\widehat{\calX_{b'}}/K)$
is mapped by $\sigma_{\cris,b'}$ to $c_1^\cris(\LL_\kk)$.
Then Theorem~\ref{T:p-adic Lefschetz 1,1 theorem}
applied to $\widehat{\calX_{b'}}$
shows that $\gamma_{02}(\LL_\kk,b')=0$.

Conversely, suppose that $\gamma_{02}(\LL_\kk,b')=0$.
Theorem~\ref{T:p-adic Lefschetz 1,1 theorem}
applied to $\widehat{\calX_{b'}}$
shows that after raising $\LL_\kk$ to a power of $p$,
we have that $\LL_\kk$ comes from some $\widehat{\LL}$ on $\widehat{\calX_{b'}}$.
By
Example~\ref{E:p-adic completion of scheme}\eqref{I:formal GAGA line bundles},
$\widehat{\LL}$ comes from some $\LL$ on $\calX_{b'}$.
Then $[\LL_\kk] = \spe_{\bar{b}',\bar{s}}([\Kbar \tensor \LL])$.
This completes the proof of Claim~\ref{C:lifting criterion 1}.
\end{proof}

Because of Remark~\ref{R:factors through NS},
$\gamma_{02}$ on $\Pic \calX_{\bar{s}}$ induces a homomorphism
from $\NS \calX_{\bar{s}}$ to the space of sections
of the sheaf $\Kbar \tensor_K \HH^{02}(\calX_T/K)$ on $T$.
Let $\Lambda_T$ be the image,
so $\Lambda_T$ is a finitely generated $\Z$-module.
For any $b' \in B'(\calO_\Kbar)$,
evaluation at $b'$ as in 
Proposition~\ref{P:arising from algebra}\eqref{I:isomorphism of H02}
defines a homomorphism $\ev_{b'}$
from $\Lambda_T$ (or $(\Lambda_T)_\Q$)
to $\HH^{02}(\widehat{\calX_{b'}}/\Kbar)$.

Applying Claim~\ref{C:lifting criterion 1}
over all finite extensions of $\calO_K$ yields

\begin{corollary}
\label{C:formula}
\hfill
\begin{enumerate}[\upshape (a)]
\item
\label{I:rho formula}
For any $b' \in B'(\calO_{\Kbar})$, $\rho(\calX_{b'})$
is the rank of the kernel of the composition
\[
\xymatrix{
	(\NS \calX_{\bar{s}})_\Q \ar@{->>}[r]^-{\gamma_{02}} \ar@/_1pc/[rr]_-{\gamma_{02}(-,b')} & (\Lambda_T)_\Q \ar[r]^-{\ev_{b'}} & \HH^{02}(\widehat{\calX_{b'}}/\Kbar). \\
}
\]
\item
\label{I:condition for rho equality}
In particular,
\begin{equation}
\label{E:rho lower bound}
	\rho(\calX_{b'}) \ge \rk \ker \gamma_{02},
\end{equation}
with equality if and only if $\ev_{b'} \colon \Lambda_T \to \HH^{02}(\widehat{\calX_{b'}}/\Kbar)$
is injective.
\end{enumerate}
\end{corollary}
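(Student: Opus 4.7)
The plan is that both parts of the corollary fall out of Claim~\ref{C:lifting criterion 1} once we track the maps carefully; no new geometric input is needed beyond what has already been set up.

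For part~\eqref{I:rho formula}, I will argue as follows. By construction, for any line bundle class $[\LL_\kk] \in (\NS \calX_{\bar s})_\Q$, the value $\gamma_{02}(\LL_\kk, b')$ is obtained by first applying the surjection $\gamma_{02} \colon (\NS \calX_{\bar s})_\Q \twoheadrightarrow (\Lambda_T)_\Q$ (defined as the image of the section-valued map, using Remark~\ref{R:factors through NS}) and then evaluating at $b'$ via $\ev_{b'}$. Thus the kernel of the composition $\ev_{b'} \circ \gamma_{02}$ is exactly the set of classes $[\LL_\kk]$ with $\gamma_{02}(\LL_\kk, b') = 0$, which by Claim~\ref{C:lifting criterion 1} is precisely the image of the specialization map
\[
\spe_{\bar b', \bar s} \colon (\NS \calX_{b', \Kbar})_\Q \injects (\NS \calX_{\bar s})_\Q.
\]
Because $\spe_{\bar b',\bar s}$ is injective (Proposition~\ref{P:construction of specialization 2}\eqref{I:p>0}), the dimension of its image equals $\rho(\calX_{b'})$, which proves~\eqref{I:rho formula}.

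For part~\eqref{I:condition for rho equality}, I will do a short dimension count using the surjectivity of $\gamma_{02}$ onto $(\Lambda_T)_\Q$. Since $\gamma_{02}$ is surjective, $\ker(\ev_{b'} \circ \gamma_{02}) = \gamma_{02}^{-1}(\ker \ev_{b'})$, so
\[
\rho(\calX_{b'}) \;=\; \rk \ker \gamma_{02} + \dim_\Q \ker\bigl(\ev_{b'}|_{(\Lambda_T)_\Q}\bigr).
\]
The inequality~\eqref{E:rho lower bound} is now immediate, and equality holds precisely when $\ev_{b'}$ is injective on $(\Lambda_T)_\Q$, which (since $\Lambda_T$ is a finitely generated $\Z$-module and $\HH^{02}(\widehat{\calX_{b'}}/\Kbar)$ is a $\Kbar$-vector space, hence torsion-free) is equivalent to injectivity of $\ev_{b'}$ on $\Lambda_T$ itself.

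There is no real obstacle here beyond bookkeeping: the substance was already packaged into Claim~\ref{C:lifting criterion 1} (which invoked the $p$-adic Lefschetz $(1,1)$ theorem and formal GAGA), and the present corollary is essentially a reformulation that makes explicit the dependence on $b'$ through the single evaluation map $\ev_{b'}$ on the fixed finitely generated module $\Lambda_T$. The only subtlety to double-check is that everything is legitimate after extending scalars from $\calO_K$ to $\calO_{\Kbar}$, but this was already noted in the paragraph preceding the corollary statement, where the compatibility of crystalline cohomology, the de Rham comparison, and the $p$-adic Lefschetz theorem with finite base extension justifies passing to the direct limit.
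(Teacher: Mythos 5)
Your proof is correct and follows exactly the route the paper intends; the paper compresses the proof to the single sentence ``Applying Claim~\ref{C:lifting criterion 1} over all finite extensions of $\calO_K$ yields,'' and your argument simply expands the implicit bookkeeping. In particular you have correctly identified the two necessary ingredients: for~\eqref{I:rho formula}, that $\ker(\ev_{b'}\circ\gamma_{02})$ equals the image of $\spe_{\bar b',\bar s}$ (by Claim~\ref{C:lifting criterion 1}, applied over finite extensions so that both $\calO_{\Kbar}$-points $b'$ and classes in $\NS\calX_{\bar s}$ are covered) and is therefore a subspace of rank $\rho(\calX_{b'})$ since $\spe_{\bar b',\bar s}$ is injective; and for~\eqref{I:condition for rho equality}, the rank count from the surjection $\gamma_{02}$ together with the observation that $\Lambda_T$ is torsion-free (being a subgroup of a $\Kbar$-vector space of sections), so injectivity on $\Lambda_T$ and on $(\Lambda_T)_\Q$ coincide.
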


Proposition~\ref{P:pullback of de Rham}\eqref{I:Deligne locally free} 
lets us apply Remark~\ref{R:analytic sheaf}
to $\FF:=\RR^2 f_* \OO_{\calX'}$ on $B'$
to obtain a polydisk neighborhood $U'$ of $b_0'$ in $B'(\calO_\Kbar)$
such that the subgroup $\Lambda_T$ of global sections
of $\Kbar \tensor \HH^{02}(\calX_T/K)$
is expressed on $U'$ as a subgroup $\Lambda'$ of $\scriptH(U')^n$:
in fact, if $K$ is enlarged so that all elements of $\NS(\calX_{\bar{s}})$
are defined over the residue field $\kk$,
then the coefficients of the elements in $\Lambda'$ lie in $K$.
For $b' \in U'$, we may interpret $\ev_{b'}$ concretely in terms of
evaluation of power series in $\Lambda'$.

We will prove
\begin{equation}
\label{E:generic rank}
	\rk \ker \gamma_{02} = \rho(\calX_{\bar{\eta}})
\end{equation}
by comparing both with $\rho(\calX_\beta)$ for a ``very general'' $\beta \in B'(\OO_\Kbar)$.

\begin{lemma}
\label{L:countable union}
Let $Z$ be a finite-type $K$-scheme that is smooth of dimension $n$.
Fix $z_0 \in Z(K)$.
Then no countable union of subschemes of $Z$ of dimension less than $n$
can contain a neighborhood of $z_0$ in $Z(K)$.
\end{lemma}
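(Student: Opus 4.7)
The plan is to exploit the completeness of $K$ (from Setup~\ref{A:STAR}) together with the implicit function theorem to reduce to a Baire category argument on a $p$-adic polydisk.

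First I would use the smoothness hypothesis at $z_0$ to pass to a polydisk neighborhood: after shrinking, I can find an analytic isomorphism of a neighborhood $U$ of $z_0$ in $Z(K)$ with a closed polydisk $\Delta = \{(z_1,\ldots,z_n) \in K^n : |z_i| \le \epsilon\}$, by the same implicit function theorem argument used in Definition~\ref{D:polydisk}. Since $K$ is complete with respect to its discrete valuation, $\Delta$ is a complete metric space in the analytic topology, hence a Baire space.

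Next, for each subscheme $W \subseteq Z$ of dimension strictly less than $n$, I would show that $W(K) \cap U$ is nowhere dense in $U$. After shrinking $U$ further (which is harmless for the conclusion, since we may work with the countable collection one at a time and intersect the resulting dense open complements) and replacing $W$ by the intersection of an affine chart with its scheme-theoretic closure, I may assume $W$ is cut out in an ambient smooth chart by a finite set of regular functions not all zero on the irreducible component of $Z$ through $z_0$. Pulling back via the polydisk parametrization, $W \cap U$ becomes the zero locus of finitely many convergent power series $f_1,\ldots,f_r \in \scriptH(U)$, at least one of which is not identically zero on $\Delta$. The identity principle for convergent $p$-adic power series (a nonzero element of $\scriptH(\Delta)$ cannot vanish on any open subset of $\Delta$, as follows from the Weierstrass preparation theorem or a direct induction on $n$) then implies that this zero locus has empty interior in $\Delta$.

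Finally, applying the Baire category theorem to the complete metric space $\Delta \cong U$, the countable union $\bigcup_i W_i(K) \cap U$ of nowhere dense closed sets cannot exhaust $U$; indeed its complement in $U$ is dense. Hence the countable union does not contain any neighborhood of $z_0$. The main technical point is the $p$-adic identity principle used in the middle step; everything else is formal consequence of completeness of $K$ and smoothness of $Z$ at $z_0$.
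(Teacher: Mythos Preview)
Your argument is correct, but it proceeds differently from the paper. The paper reduces to $Z=\Aff^n$ via an \'etale chart and then argues directly by induction on $n$: project each $W_i$ to $\Aff^{n-1}$, use the inductive hypothesis to find a point in $\Aff^{n-1}(K)$ missed by the images of all but countably many fibers, and then use that $K$ is uncountable to find a good point in the remaining fiber. No completeness, no Baire, no identity principle---just uncountability of $K$.

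Your route instead exploits the completeness of $K$ from Setup~\ref{A:STAR}: a closed polydisk in $K^n$ is a complete metric space, each $W_i(K)$ sits inside the zero locus of a nonzero convergent power series (hence has empty interior by the $p$-adic identity principle), and Baire finishes it. This is arguably more conceptual and gives the stronger conclusion that the complement is dense in every neighborhood, not just nonempty. One wrinkle: your parenthetical about ``shrinking $U$ further'' for each $W$ separately is misleading---doing that countably many times could destroy the neighborhood. In fact no further shrinking is needed: once you fix a single polydisk $\Delta$ inside an affine open of $Z$, every closed subscheme $W$ of that affine open is already cut out by regular functions there, and since $\dim W<n$ at least one of them is nonzero on the component through $z_0$, hence pulls back to a nonzero element of $\scriptH(\Delta)$. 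So drop the shrinking remark and run Baire on the fixed $\Delta$.
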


\begin{proof}
Shrink $Z$ so that there is an \'{e}tale morphism $\pi\colon Z \to \Aff^n$.
By the definition of \'etale morphism and 
the nonarchimedean implicit function theorem~\cite{Igusa2000}*{Theorem~2.1.1},
$\pi$ maps any sufficiently small neighborhood of $z_0$ in $Z(K)$
bijectively to a neighborhood of $\pi(z_0)$ in $\Aff^n(K)$.
Also, the scheme-theoretic image in $\Aff^n$ 
of any subscheme in $Z$ of dimension less than $n$
is of dimension less than $n$.
So we may reduce to the case $Z=\Aff^n$.
This follows by induction on $n$
by projecting onto $\Aff^{n-1}$ and using the uncountability of $K$.
\end{proof}

Corollary~\ref{C:jumping locus is countable union}
and Lemma~\ref{L:countable union}
show that there exists $\beta \in B'(\OO_\Kbar)$ near $b_0'$
such that $\rho(\calX_\beta) = \rho(\calX_{\bar{\eta}})$.
For any $b' \in B'(\OO_\Kbar)$, we have
\begin{equation}
\label{E:key inequalities}
	\rk \ker \gamma_{02} \le \rho(\calX_\beta) = \rho(\calX_{\bar{\eta}}) \le \rho(\calX_{b'})
\end{equation}
by \eqref{E:rho lower bound}, the choice of $\beta$, and
Proposition~\ref{P:construction of specialization 2}\eqref{I:rho inequality},
respectively.

\begin{claim}
\label{C:one point}
For some $b' \in B'(\OO_\Kbar)$, we have $\rk \ker \gamma_{02} = \rho(\calX_{b'})$.
\end{claim}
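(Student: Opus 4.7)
The plan is to combine Corollary~\ref{C:formula}\eqref{I:condition for rho equality} with the key $p$-adic power-series input (Proposition~\ref{P:evaluation is injective}) already alluded to in Section~\ref{S:p-adic outline} and to be proved in Section~\ref{S:unions of zero loci}. By that corollary, equality holds in \eqref{E:rho lower bound} at a given $b'$ precisely when $\ev_{b'}\colon (\Lambda_T)_\Q \to \HH^{02}(\widehat{\calX_{b'}}/\Kbar)$ is injective, so it suffices to produce a single $b' \in B'(\calO_\Kbar)$---which I would look for inside the polydisk neighborhood $U'$ of $b_0'$ already fixed above---at which $\ev_{b'}$ has trivial kernel on $(\Lambda_T)_\Q$.

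Next, I would make this injectivity completely concrete. Via the trivialization of Remark~\ref{R:analytic sheaf} applied to $\RR^2 f_* \OO_{\calX'}$, the group $\Lambda_T$ is realized, after shrinking $U'$ if necessary, as a finitely generated $\Z$-submodule $\Lambda' \subseteq \scriptH(U')^n$, and $\ev_{b'}$ becomes substitution of the coordinates of $b'$ into each $n$-tuple of power series. Choose elements $\lambda_1,\ldots,\lambda_r \in \Lambda'$ whose images form a $\Q$-basis of $(\Lambda_T)_\Q$; these are then $\Q$-linearly independent elements of $\scriptH(U')^n$, and the desired injectivity becomes the assertion that for some $b' \in U'$ the values $\lambda_1(b'),\ldots,\lambda_r(b') \in \Kbar^n$ are still $\Q$-linearly independent.

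At this stage the problem is purely one about $p$-adic power series, and I would dispatch it by invoking Proposition~\ref{P:evaluation is injective} (applied componentwise to the $n$-tuples $\lambda_i$, or directly if the proposition is stated in the vector-valued form): that is precisely the statement that $\Q$-linearly independent convergent $p$-adic power series on a polydisk admit a common point of evaluation at which the values remain $\Q$-linearly independent. That proposition is the real obstacle, and I regard the present claim as a routine consequence of it. The naive approach only shows that each individual nonzero $\Q$-linear combination $\sum c_i \lambda_i$ vanishes on a nowhere-dense analytic subset of $U'$; but there are countably many such combinations up to scaling, and a countable union of nowhere-dense subsets of a $p$-adic polydisk need not be nowhere dense (cf.\ the discussion in Section~\ref{S:p-adic outline}), so a genuine $p$-adic-analytic argument is needed to guarantee that these bad loci do not cover $U'$. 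Supplying that argument is the task of Section~\ref{S:unions of zero loci}; granting it, Claim~\ref{C:one point} follows immediately.
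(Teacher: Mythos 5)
Your proposal is correct and matches the paper's argument: reduce to injectivity of $\ev_{b'}$ via Corollary~\ref{C:formula}\eqref{I:condition for rho equality}, realize $\Lambda_T$ as the lattice $\Lambda'\subseteq\scriptH(U')^n$, and invoke Proposition~\ref{P:evaluation is injective}; your diagnosis that a countability/Baire-type argument is inadequate and that the real content lies in Section~\ref{S:unions of zero loci} is exactly right. Two small points left implicit: Proposition~\ref{P:evaluation is injective} furnishes a nonempty open subset of a polydisk in $C^d$, so one must invoke the density of $\Kbar$ in $C$ to land on the required $b'\in B'(\calO_{\Kbar})$; and that proposition actually gives injectivity of $\ev_u$ on the full $\Q_p$-span of $\Lambda'$ (its proof genuinely needs $\Q_p$, via compactness of $\Z_p$), which is stronger than the preservation of $\Q$-linear independence you describe and hence a fortiori sufficient.
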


\begin{proof}
Applying Proposition~\ref{P:evaluation is injective}\footnote{The proofs in
Section~\ref{S:unions of zero loci} do not rely on any results in
this section.}
to $\Lambda'$
gives a nonempty open subset $V$ of the polydisk in $C^d$
such that $\ev_u$ is injective for $u \in V$.
Since $\Kbar$ is dense in $C$, the set $V$ contains a point in $\Kbar^d$,
and we let $b'$ be its image in $B'(\calO_\Kbar)$,
so $\ev_{b'}$ is injective.
Apply Corollary~\ref{C:formula}\eqref{I:condition for rho equality} to $b'$.
\end{proof}

Applying \eqref{E:key inequalities} with $b'$ as in Claim~\ref{C:one point}
proves \eqref{E:generic rank}.

Next, $\wp$ maps $U'$ isomorphically to
a polydisk neighborhood $U$ of $b_0$ in $B(\calO_{\Kbar})$,
and $\Lambda'$ corresponds to some $\Lambda \subseteq \scriptH(U)^n$.
Substituting \eqref{E:generic rank} into
Corollary~\ref{C:formula}\eqref{I:condition for rho equality},
expressed on $U$ in terms of $\Lambda$, shows that for $b \in U$,
\[
	\rho(\calX_b) \ge \rho(\calX_{\bar{\eta}}),
\]
with equality if and only if
$\lambda(b) \ne 0$ for every nonzero $\lambda \in \Lambda$.
This completes the proof of Lemma~\ref{L:isocrystal output}.

\begin{remark}
We conjecture that Lemma~\ref{L:isocrystal output}
holds with $B(\calO_\Kbar)$ replaced by $B(\Kbar)$,
but crystalline methods do not suffice to prove this.
This would imply that Theorem~\ref{T:local} holds with $B(C)$
in place of $B(\calO_C)$.
\end{remark}

\section{Unions of zero loci of power series}
\label{S:unions of zero loci}

To understand the nature of the following statement,
the reader is urged to consider the $r=1$ case first.

\begin{proposition}
\label{P:evaluation is injective}
Let $C$ be as in Setup~\ref{A:STAR}.
Let $D = \{(z_1,\ldots,z_d) \in C^d : v(z_i) \le \epsilon \textup{ for all $i$}\}$
for some $\epsilon>0$.
Let $R$ be the subring of $C[[z_1,\ldots,z_d]]$
consisting of power series that converge on $D$.
Let $r$ be a nonnegative integer.
Let $\Lambda$ be a finite-dimensional $\Q_p$-subspace of $R^r$.
Then there exists a nonempty analytic open subset $U$ of $D$
such that for all $u \in U$, the evaluation-at-$u$ map
\begin{align*}
	\ev_u \colon \Lambda &\to C^r \\
	(f_1,\ldots,f_r)
		& \mapsto (f_1(u),\ldots,f_r(u))
\end{align*}
is injective.
\end{proposition}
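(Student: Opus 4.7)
The plan is to induct on $n := \dim_{\Q_p}\Lambda$. The base cases $n \le 1$ are immediate: take $U = D$ if $n = 0$, or $U = D \setminus Z(\lambda)$ for a nonzero $\lambda \in \Lambda$ if $n = 1$. For the inductive step, choose a $\Q_p$-basis $\lambda_1, \ldots, \lambda_n$ of $\Lambda$ and apply the inductive hypothesis to $\Lambda' := \Q_p\lambda_1 + \cdots + \Q_p\lambda_{n-1}$ to obtain a nonempty analytic open $U_0 \subseteq D$ on which $\lambda_1(u), \ldots, \lambda_{n-1}(u) \in C^r$ are $\Q_p$-linearly independent. Set
\[
  U_0^{\mathrm{bad}} := \{u \in U_0 : \lambda_n(u) \in \Q_p\lambda_1(u) + \cdots + \Q_p\lambda_{n-1}(u)\}.
\]
Normalizing any hypothetical dependence relation so that its coefficients lie in the compact set $\Z_p^n \setminus p\Z_p^n$ and extracting a convergent subsequence shows that $U_0^{\mathrm{bad}}$ is closed in $U_0$, so the complement $U_0 \setminus U_0^{\mathrm{bad}}$ is open; it thus suffices to produce a single good point, or equivalently to derive a contradiction from the hypothesis $U_0^{\mathrm{bad}} = U_0$.

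Assume $U_0^{\mathrm{bad}} = U_0$. Then $\Q_p$-independence of $\lambda_1(u), \ldots, \lambda_{n-1}(u)$ provides a unique $c(u) = (c_1(u), \ldots, c_{n-1}(u)) \in \Q_p^{n-1}$ with $\lambda_n(u) = \sum_i c_i(u)\lambda_i(u)$. Fix $u_0 \in U_0$. The $\Q_p$-linear map $L_{u_0}\colon \Q_p^{n-1} \to C^r$, $c \mapsto \sum_i c_i\lambda_i(u_0)$, is injective onto a finite-dimensional $\Q_p$-normed subspace and so admits a lower bound $\|L_{u_0}(c)\|_C \ge (1/M)\|c\|_\infty$ for some $M>0$; combined with $\|L_u - L_{u_0}\|_{\mathrm{op}} \le \max_i|\lambda_i(u) - \lambda_i(u_0)|_C$, a perturbation argument yields a polydisk neighborhood $V \subseteq U_0$ of $u_0$ on which $|L_u(c)|_C \ge (1/(2M))\|c\|_\infty$ uniformly. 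Set $F := \lambda_n - \sum_i c_i(u_0)\lambda_i \in R^r$ and $\delta(u) := c(u) - c(u_0) \in \Q_p^{n-1}$, so that $F(u_0) = 0$ and $F(u) = \sum_i \delta_i(u)\lambda_i(u)$ on $V$. If $F \equiv 0$ on $D$, we already obtain the $\Q_p$-relation $\lambda_n = \sum_i c_i(u_0)\lambda_i$, contradicting the $\Q_p$-independence of $\lambda_1, \ldots, \lambda_n$. Otherwise let $m \ge 1$ be the order of vanishing of $F$ at $u_0$ and let $F_m \colon C^d \to C^r$ be the nonzero homogeneous degree-$m$ term in the Taylor expansion of $F$ at $u_0$.

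The heart of the argument is to show that in fact $F_m \equiv 0$, yielding the desired contradiction. Write $v := u - u_0$. The Taylor expansion gives $|F(u)|_C = O(|v|^m)$, while the uniform lower bound gives $|F(u)|_C \ge (1/(2M))\|\delta(u)\|_\infty$; together these yield $\|\delta(u)\|_\infty = O(|v|^m)$ on $V$. Using $\lambda_i(u) = \lambda_i(u_0) + O(|v|)$ one obtains
\[
  F(u) \;=\; \sum_i \delta_i(u)\lambda_i(u_0) \;+\; O\bigl(\|\delta(u)\|_\infty \cdot |v|\bigr) \;=\; \sum_i \delta_i(u)\lambda_i(u_0) \;+\; O(|v|^{m+1}).
\]
For any fixed $v \in C^d$, letting $t \to 0$ with $u_0 + tv \in V$, the Taylor side gives $t^{-m}F(u_0 + tv) \to F_m(v)$, whereas the displayed identity expresses $t^{-m}F(u_0 + tv)$ as an element of $V^\ast := \Q_p\lambda_1(u_0) + \cdots + \Q_p\lambda_{n-1}(u_0) \subseteq C^r$ plus a term of size $O(t)$. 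Since the finite-dimensional $\Q_p$-subspace $V^\ast$ is closed in $C^r$, taking the limit forces $F_m(v) \in V^\ast$ for every $v \in C^d$. But $F_m$ is a $C$-polynomial map, so the $C$-linear span of its image is a $C$-subspace $W^\ast$ of $C^r$; since $[C:\Q_p] = 2^{\aleph_0}$, any nonzero $C$-subspace of $C^r$ has infinite $\Q_p$-dimension and therefore cannot embed in the $(n-1)$-dimensional $\Q_p$-subspace $V^\ast$. Hence $W^\ast = 0$ and $F_m \equiv 0$, the required contradiction. The most delicate step I anticipate is securing the uniform perturbation bound making $L_u$ bounded below for $u$ in a neighborhood of $u_0$, which is what powers the sharp estimate $\|\delta(u)\|_\infty = O(|v|^m)$; once this is in place, the extraction of the leading term and the finite-versus-infinite $\Q_p$-dimension clash between $V^\ast$ and any nonzero $C$-subspace of $C^r$ make the final contradiction essentially formal.
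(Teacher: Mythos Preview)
Your argument works and is genuinely different from the paper's, but two steps need tightening. First, take the scaling parameter $t$ in $\Q_p^\times$, not in $C$: only then does $t^{-m}\delta_i(u_0+tv)$ lie in $\Q_p$, so that $t^{-m}\sum_i \delta_i(u_0+tv)\lambda_i(u_0)$ is genuinely an element of $V^*$; with $t \in \Q_p^\times$ and $V^*$ closed in $C^r$, the conclusion $F_m(v) \in V^*$ then follows as you say. Second, you have only shown $\im(F_m) \subseteq V^*$, not that its $C$-span $W^*$ lies there, so the last sentence does not parse as written. Argue directly instead: homogeneity gives $F_m(tv_0) = t^m F_m(v_0)$ for all $t \in C$, and since $C$ is algebraically closed the map $t \mapsto t^m$ is onto, so if $F_m(v_0) \ne 0$ then the full $C$-line $C \cdot F_m(v_0)$ already sits inside $\im(F_m) \subseteq V^*$, contradicting $\dim_{\Q_p} V^* \le n-1$.

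The paper proceeds quite differently. It inducts on $r$ rather than on $\dim_{\Q_p}\Lambda$, a short d\'evissage reducing to the scalar case $r=1$, and there it \emph{constructs} $U$ explicitly: after passing to a jet order $m$ at which a chosen $\Q_p$-basis $f_1,\ldots,f_n$ remains independent, it picks distinct large primes $q_1,\ldots,q_d$ avoiding certain denominators and sets $U = \{u : v(u_i) = N + 1/q_i\}$ for a suitable integer $N$. A compactness lemma bounding the valuations attained by $\Q_p$-linear combinations of fixed vectors in $C^s$ then ensures that for any nonzero $f = \sum a_i f_i$ and any $u \in U$, the low-degree monomial contributions $c^\mu \mu(u)$ have pairwise distinct valuations (each exponent $e_i$ is recovered from the class of $v(c^\mu\mu(u))$ in $\Q/\Z_{(q_i)}$), so no cancellation occurs and $f(u) \ne 0$. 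Your proof is softer and more conceptual, resting on the closedness of finite-dimensional $\Q_p$-subspaces and the infinitude of $[C:\Q_p]$; the paper's is harder and constructive, producing an explicit valuation-theoretic description of a good open set.
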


\begin{remark}
The archimedean analogue of Proposition~\ref{P:evaluation is injective}
is false.
For example, if $\Lambda$ is the $\R$-span of $1,x,x^2$,
then there is no $u \in \C$ such that
the evaluation-at-$u$ map $\Lambda \to \C$ is injective.
Even if we consider only the $\Z$-span of $1,x,x^2$,
the evaluation-at-$u$ map fails to be injective on a dense subset of $\C$.
\end{remark}

The rest of this section is devoted to the proof of
Proposition~\ref{P:evaluation is injective}.
The proof is by induction on $r$.
Because the base case $r=1$ is rather involved,
we begin by explaining the inductive step.

Suppose that $r>1$, and that Proposition~\ref{P:evaluation is injective}
is known for $r'<r$.
Let $\pi\colon R^r \to R^{r-1}$ be the projection to
the first $r-1$ coordinates.
Let $\Lambda^{(r)}$ and $\Lambda_{r-1}$ be the kernel and image of $\pi|_\Lambda$.
View $\Lambda^{(r)}$ as a subgroup of $R$.
For any $u \in D$, we have a commutative diagram with exact rows
\begin{equation}
\label{E:Lambdas}
\xymatrix{
0 \ar[r] & \Lambda^{(r)} \ar[r] \ar[d]^{\ev_u} & \Lambda \ar[r] \ar[d]^{\ev_u} & \Lambda_{r-1} \ar[r] \ar[d]^{\ev_u} & 0 \\
0 \ar[r] & C \ar[r] & C^r \ar[r] & C^{r-1} \ar[r] & 0. \\
}
\end{equation}
The inductive hypothesis applied to $\Lambda_{r-1} \subseteq R^{r-1}$
gives a closed polydisk $D' \subseteq D$
such that the right vertical map in~\eqref{E:Lambdas} is injective
for all $u \in D'$.
The inductive hypothesis applied to $\Lambda^{(r)} \subseteq R$
gives a closed polydisk $D'' \subseteq D'$
such that the left vertical map in~\eqref{E:Lambdas} is injective
for all $u \in D''$.
Then for $u \in D''$, the middle vertical map in~\eqref{E:Lambdas}
is injective.
This completes the proof of the inductive step.

Before discussing the base case $r=1$,
we prove another lemma.
Let $v \colon C \to \Q \union \{+\infty\}$ be the valuation on $C$,
normalized by $v(p)=1$.
If $\vec{t} = (t_1,\ldots,t_s) \in C^s$ for some $s$,
define $v(\vec{t}):=\min_j v(t_j)$.

\begin{lemma}
\label{L:Qp linear combinations}
\hfill
  \begin{enumerate}[\upshape (a)]
  \item
\label{I:independent vectors}
If $\vec{t}_1,\ldots,\vec{t}_n \in C^s$ are $\Q_p$-independent,
then $\left\{v\left(\sum a_i \vec{t}_i\right) : (a_1,\ldots,a_n) \in (\Z_p)^n - (p\Z_p)^n\right\}$
is finite.
  \item
\label{I:image in Q/Z}
If $t_1,\ldots,t_n \in C$, then $\left\{v(t): t = \sum a_i t_i \ne 0 \textup{ for some $a_i \in \Q_p$}\right\}$ has finite image in $\Q/\Z$.
  \end{enumerate}
\end{lemma}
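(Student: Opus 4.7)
The plan is to prove (a) by compactness plus local constancy, and then to deduce (b) from (a) by clearing denominators with a power of $p$.

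For (a), I would introduce the $\Z_p$-linear map $\phi \colon \Z_p^n \to C^s$ defined by $\phi(a_1,\ldots,a_n) = \sum_i a_i \vec{t}_i$, and let $S = \Z_p^n \setminus (p\Z_p)^n$. The first observation is that $S$ is compact, being closed inside the compact space $\Z_p^n$. The second observation is that $\Q_p$-independence of $\vec{t}_1,\ldots,\vec{t}_n$ implies $\phi(a) \ne 0$ for every $a \in S$, so $v(\phi(a))$ is a well-defined element of $\Q$. The third and key observation is that the map $a \mapsto v(\phi(a))$ is \emph{locally constant} on $S$: fix $a \in S$, let $M = \max_i v(\vec{t}_i) \in \Q$ (componentwise min-valuation), and note that if $a' \in \Z_p^n$ satisfies $v(a'_i - a_i) > v(\phi(a)) - M$ for every $i$, then $v(\phi(a') - \phi(a)) > v(\phi(a))$, so by the ultrametric inequality $v(\phi(a')) = v(\phi(a))$. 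Together, compactness and local constancy force $v \circ \phi$ to take only finitely many values on $S$, which is precisely (a).

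For (b), I would reduce to the $\Q_p$-independent case: choose a maximal $\Q_p$-independent subset $t_{i_1},\ldots,t_{i_k}$ of $\{t_1,\ldots,t_n\}$, and rewrite an arbitrary $t = \sum_i a_i t_i$ as $t = \sum_j b_j t_{i_j}$ with $b_j \in \Q_p$. If $t \ne 0$, then not all $b_j$ vanish, so letting $N = \min_j v(b_j) \in \Z$ and setting $b'_j = p^{-N} b_j$ puts $(b'_j) \in \Z_p^k \setminus (p\Z_p)^k$. Then $v(t) = N + v\!\left(\sum_j b'_j t_{i_j}\right)$, and since $N \in \Z$ this shows $v(t) \bmod \Z$ lies in the image mod $\Z$ of the finite set produced by part (a) applied to $t_{i_1},\ldots,t_{i_k} \in C = C^1$. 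Hence the image of the full set of valuations in $\Q/\Z$ is finite.

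There is no real obstacle here; the only thing to handle carefully is the ultrametric estimate that guarantees local constancy of $v \circ \phi$ on $S$, and the bookkeeping in (b) ensuring that the scaling factor used to move $(b_j)$ into $S$ has integral valuation so that it disappears modulo $\Z$.
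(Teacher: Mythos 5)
Your approach matches the paper's: for (a), compactness of $\Z_p^n \setminus (p\Z_p)^n$ together with local constancy (equivalently, continuity into the discrete space $\Q$) of $a \mapsto v(\phi(a))$ forces the image to be finite; for (b), you reduce to the $\Q_p$-independent, $s=1$ case by passing to a basis for the $\Q_p$-span and pulling out a power of $p$, which drops out mod $\Z$. The paper compresses the local-constancy step into the single word ``continuous,'' while you unpack the ultrametric estimate behind it.

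That estimate, however, has a sign slip. Writing $v(\vec{t}_i)=\min_j v\bigl((\vec{t}_i)_j\bigr)$, the conclusion $v(\phi(a')-\phi(a))>v(\phi(a))$ needs $v(a'_i-a_i)+v(\vec{t}_i)>v(\phi(a))$ for every $i$, i.e.\ $v(a'_i-a_i)>v(\phi(a))-v(\vec{t}_i)$; to cover all $i$ simultaneously you must subtract the \emph{minimum} of the $v(\vec{t}_i)$, not the maximum. With $M=\max_i v(\vec{t}_i)$ as you wrote, the hypothesis $v(a'_i-a_i)>v(\phi(a))-M$ only yields $v(a'_i-a_i)+v(\vec{t}_i)>v(\phi(a))-M+v(\vec{t}_i)$, and since $v(\vec{t}_i)\le M$ the right-hand side can be $\le v(\phi(a))$, so the ultrametric equality $v(\phi(a'))=v(\phi(a))$ is not forced. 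Replacing $M$ by $m:=\min_i v(\vec{t}_i)$ repairs this: the set of $a'$ with $v(a'_i-a_i)>v(\phi(a))-m$ for all $i$ is still an open neighborhood of $a$ in $\Z_p^n$ (all of $\Z_p^n$ when $v(\phi(a))\le m$), and then compactness finishes (a). Part (b) is correct as written.
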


\begin{proof}\hfill
  \begin{enumerate}[\upshape (a)]
  \item The function
    \begin{align*}
      (\Z_p)^n - (p\Z_p)^n &\to \Q \\
	(a_1,\ldots,a_n) &\mapsto v\left(\sum a_i \vec{t}_i\right)
    \end{align*}
is continuous for the $p$-adic topology on the left
and the discrete topology on the right, so by compactness its image is finite.
\item By replacing the $t_i$ with a basis for their $\Q_p$-span, we reduce to the $s=1$ case of~\eqref{I:independent vectors}.\qedhere
  \end{enumerate}
\end{proof}

{}From now on, we assume $r=1$; i.e., $\Lambda \subseteq R$.
Choose a $\Q_p$-basis $f_1,\ldots,f_n$ of $\Lambda$.
We may assume that $D$ is the unit polydisk,
so the coefficients of each $f_i$ tend to $0$.
Multiply all the $f_i$ by a single power of $p$
to assume that $f_i \in \calO_C[[z_1,\ldots,z_d]]$.
For some $m$, the images of $f_i$ in $C[[z_1,\ldots,z_d]]/(z_1,\ldots,z_d)^m$
are $\Q_p$-independent,
because a descending sequence of vector spaces in $\Lambda$
with zero intersection must be eventually zero.
Fix such an $m$.

Let $M$ be the set of monomials $\mu$ in the $z_i$
whose total degree $\deg \mu$ is less than $m$.
For each $\mu$, let $c_i^\mu \in C$ be the coefficient of $\mu$ in $f_i$.
For each $\mu \in M$,
apply Lemma~\ref{L:Qp linear combinations}\eqref{I:image in Q/Z}
to $c_1^\mu,\ldots,c_n^\mu$
to obtain a finite subset $S_\mu$ of $\Q/\Z$.
Let $S = \Union_{\mu \in M} S_\mu$.
Let $q_1,\ldots,q_d$ be distinct primes greater than $m$
that do not appear in the denominators
of rational numbers representing elements of $S$.
For $i=1,\ldots,n$, let $\vec{t}_i \in C^{\#M}$
be the vector whose coordinates are the $c_i^\mu$ for $\mu \in M$.
By choice of $m$, the $\vec{t}_i$ are $\Q_p$-independent.
By Lemma~\ref{L:Qp linear combinations}\eqref{I:independent vectors},
the set
\[	
	\left\{v\left(\sum a_i \vec{t}_i\right) :
	(a_1,\ldots,a_n) \in (\Z_p)^n - (p\Z_p)^n\right\}
\]
is finite; choose $A \in \Q$ larger than all its elements.
Choose a positive integer $N$ such that
\begin{equation}
\label{E:definition of N}
	mN > (m-1)(N+1/q_i) + A
\end{equation}
for all $i$.
Let
\[
	U:=\{(z_1,\ldots,z_n) \in C^n
	: v(z_i) = N + 1/q_i \text{ for all $i$}\},
\]
so $U$ is open in $D$.

Consider an arbitrary nonzero element
$f = \sum_{\text{all $\mu$}} c^\mu \mu$ of $\Lambda$.
So $f = \sum_{i=1}^n a_i f_i$ for some
$(a_1,\ldots,a_n) \in \Q_p^n - \{\vec{0}\}$.
Let $u = (u_1,\ldots,u_d) \in U$.
It remains to show that $f(u) \ne 0$.

By multiplying $f$ by a power of $p$, we may assume that
$(a_1,\ldots,a_n) \in (\Z_p)^n - (p\Z_p)^n$.
If $\mu = z_1^{e_1}\cdots z_d^{e_d}$ and $\deg \mu \ge m$,
then
\begin{equation}
\label{E:valuation1}
	v(c^\mu \mu(u)) \ge 0 + e_1 N + \cdots + e_d N \ge mN
\end{equation}
On the other hand, the definition of $A$ yields 
$\xi = z_1^{e_1}\cdots z_d^{e_d} \in M$
such that $v(c^{\xi}) < A$, so that
\begin{equation}
\label{E:valuation2}
	v(c^{\xi} \xi(u)) < A + \sum_{i=1}^d e_i(N+1/q_i) \le mN
\end{equation}
by~\eqref{E:definition of N}.

To show that $f(u) \ne 0$, it remains to show that
the valuations $v(c^\mu \mu(u))$ for $\mu \in M$ such that $c^\mu \ne 0$
are distinct,
since then the minimum of these is finite and equals $v(f(u))$,
by \eqref{E:valuation1} and~\eqref{E:valuation2}.
In fact, if $\mu = z_1^{e_1}\cdots z_d^{e_d}$ and $\deg \mu < m$ and $c^\mu \ne 0$,
then for each $i$ the choice of the $q_i$
implies $v(c^\mu) \in \Z_{(q_i)}$
(i.e., $q_i$ does not divide the denominator of $v(c^\mu)$),
so
\[
	v(c^\mu \mu(u)) \in \frac{e_i}{q_i} + \Z_{(q_i)};
\]
moreover $e_i \le \deg \mu < m < q_i$,
so $e_i$ is determined by $v(c^\mu \mu(u))$ whenever $c^\mu \ne 0$.
This completes the proof of Proposition~\ref{P:evaluation is injective}.

\section{Proof of Theorems \ref{T:local} and~\ref{T:global}}
\label{S:proof of local and global theorems}

\begin{lemma}
\label{L:density of points}
Let $k \subseteq k'$ be an extension of algebraically closed valued fields
such that $k$ is dense in $k'$.
Then for any finite-type $k$-scheme $B$, the set $B(k)$ is dense in $B(k')$
with respect to the analytic topology.
\end{lemma}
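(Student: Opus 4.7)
The plan is to reduce to the case of $B$ affine and integral, then apply Noether normalization together with continuity of roots of polynomials to lift approximations in $\Aff^d(k)$ to approximations of $k'$-points of $B$ by $k$-points.

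First I would reduce to $B$ affine: given $x \in B(k')$, any affine open $U \subseteq B$ containing the image of $x$ yields an analytic neighborhood $U(k') \subseteq B(k')$ of $x$, so it suffices to approximate $x$ by points of $U(k)$. Since $\Spec k'$ is reduced, $B(k') = B_{\mathrm{red}}(k')$, and $B_{\mathrm{red}}$ has finitely many irreducible components, each defined over the algebraically closed field $k$; shrinking $U$ further to an affine open meeting only a single component, we may assume $B = \Spec A$ is integral of some dimension $d$. By Noether normalization there is a finite surjective morphism $\pi \colon B \to \Aff^d_k$ corresponding to a finite extension $R := k[y_1, \ldots, y_d] \hookrightarrow A$; choosing $R$-module generators $\xi_1, \ldots, \xi_m$ of $A$, each satisfying a monic relation in $R[T]$, embeds $B$ as a closed subscheme of $\Aff^{d+m}$ with respect to which the analytic topology on $B(k')$ is induced from $(k')^{d+m}$.

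Given $x \in B(k')$ and a prescribed analytic neighborhood, set $y := \pi(x) \in (k')^d$, and using density of $k$ in $k'$ pick $y' \in k^d$ close to $y$. The fiber $\pi^{-1}(y')$ is a finite $k$-scheme, and because $k$ is algebraically closed its closed points are all $k$-rational, so any point of the fiber that is analytically close to $x$ automatically lies in $B(k)$. The core of the proof is to produce such a closed point. I would reduce this to continuity of roots of a single polynomial by passing to a primitive element $\alpha \in A$ for $\mathrm{Frac}(A)/\mathrm{Frac}(R)$: after localizing $R$ and shrinking $B$ to a Zariski open that still contains the image of $x$, one may assume $\pi$ is cut out inside $\Aff^d \times \Aff^1$ by a single monic polynomial $q(T) \in R[T]$. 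A Krasner-type continuity-of-roots argument then produces, for any $y' \in k^d$ close enough to $y$, a root of $q_{y'}(T)$ in $k$ close to $\alpha(x) \in k'$, which assembles into a $k$-point of $B$ close to $x$.

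The main obstacle is this continuity-of-roots step at a potentially ramified point of $\pi$, where several sheets may collide at $x$: the primitive-element reduction is what converts the problem from a family of integral relations (which would have to be solved consistently in $k$) into a single monic polynomial whose roots deform continuously with the coefficients. A further technical subtlety is inseparability of $\mathrm{Frac}(A)/\mathrm{Frac}(R)$ in positive characteristic, which can be sidestepped by a preliminary reduction to a generically étale situation on a dense Zariski open of $B$ before choosing the primitive element.
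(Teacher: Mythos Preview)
Your overall strategy via Noether normalization and continuity of roots is workable, but there is a genuine gap in the primitive-element reduction. You assert that after localizing $R$ one may arrange $B \hookrightarrow \Aff^d \times \Aff^1$ cut out by a single monic polynomial, on a Zariski open \emph{still containing the image of $x$}. This is not automatic. Take $B = \Spec k[t,u,w]/(u^2-w^3)$ with the Noether normalization $R=k[s,t]$ given by $s=u^2=w^3$, and let $x$ be the $k'$-point $(t,u,w)=(\tau,0,0)$ for some $\tau \in k' \setminus k$. The image of $x$ in $\Spec R$ is the generic point of the line $\{s=0\}$, and $A \otimes_R R/(s) \cong k[t][u,w]/(u^2,w^3)$ is not monogenic over $k[t]$ even after inverting any nonzero polynomial in $t$ (its fibers are local Artin rings of length $6$ and embedding dimension $2$). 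So for this particular normalization no primitive element works on any open containing the image of $x$; you would have to prove that a \emph{better} normalization always exists, which you do not do.

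The paper bypasses all of this with one preliminary move: replace $B$ by the reduced Zariski closure of the image of $x$ in $B$. If that closure is a closed point then $x \in B(k)$ already; otherwise the image of $x$ becomes the generic point of the new $B$, so it lies in every nonempty Zariski open and is moreover a smooth point (the generic point of an integral variety over an algebraically closed field). One can then shrink freely to make $B$ finite \'etale over an open in $\Aff^n$, after which fibers over $k$-points consist entirely of $k$-points and a one-variable continuity-of-roots argument finishes immediately. Once you insert this reduction, your primitive-element and inseparability maneuvers become unnecessary and the proof collapses to a few lines.
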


\begin{proof}
Let $b' \in B(k')$ be a point to be approximated by $k$-points.
We may replace $B$ by the Zariski closure of the image of $b'$
under $B_{k'} \to B$.
Then $b'$ is a smooth point.
We may shrink $B$ to assume that $B$ is 
finite \'{e}tale over an open subscheme of $\Aff^n$ for some $n$.
Since $k$ is algebraically closed, we may reduce to the case
that $B$ is open in $\Aff^n$.
This case follows from $k$ being dense in $k'$.
\end{proof}

\begin{proof}[Proof of Theorem~\ref{T:local}]
Let $b_0 \in B(\calO_C)$.
We need to show that any neighborhood $U_0$ of $b_0$ in $B(\calO_C)$
contains a nonempty open set $V$ that does not meet $B(\calO_C)_{\jumping}$.
By Lemma~\ref{L:density of points} we may assume that $b_0 \in B(\calO_\Kbar)$.
Lemma~\ref{L:isocrystal output} gives a smaller neighborhood
$U_1$ of $b_0$ in $B(\calO_C)$ in which the jumping locus
is described explicitly in terms of power series,
and Proposition~\ref{P:evaluation is injective}
gives a nonempty open subset $V$ of $U_1$
such that $\rho(\calX_b)=\rho(\calX_{\bar{\eta}})$
for all $b \in V \intersect B(\calO_\Kbar)$.

Suppose that $b \in V \intersect B(\calO_C)_{\jumping}$.
By Corollary~\ref{C:jumping locus is countable union},
$b$ is contained in a $\Kbar$-variety $Z$
such that $Z(C) \subseteq B(C)_{\jumping}$.
Lemma~\ref{L:density of points} gives
$b' \in V \intersect Z(\Kbar) \subseteq B(\OO_\Kbar)_{\jumping}$,
which contradicts the definition of $V$.
\end{proof}

\begin{proof}[Proof that Theorem~\ref{T:local} implies Theorem~\ref{T:global}]
Assume that $k$ and $\calX \to B$ are as in Theorem~\ref{T:global}.
Replacing $B$ by a dense open subscheme,
we may assume that $B$ is smooth over $k$.
Choose a finitely generated $\Z$-algebra $A$ in $k$
such that $\calX \to B$ is the base extension of a morphism
$\calX_A \to B_A$ of $A$-schemes.
By localizing $A$, we may assume that $\calX_A \to B_A$
is a smooth proper morphism, and that $B_A \to \Spec A$
is smooth with geometrically irreducible fibers 
\cite{EGA-IV.III}*{8.10.5(xii),~9.7.7(i)}, \cite{EGA-IV.IV}*{17.7.8(ii)}.
Because of Proposition~\ref{P:algebraically closed base change},
we may replace $k$ by the algebraic closure of $\Frac(A)$ in $k$.

By \cite{CasselsLocalFields1986}*{Chapter~5,~Theorem~1.1},
there exists an embedding $A \injects \Z_p$ for some prime $p$.
Extend it to an embedding $k \injects \C_p$.
Base extend by $A \injects \Z_p$ to obtain $\calX_{\Z_p} \to B_{\Z_p}$.
Since $B_{\Z_p} \to \Spec \Z_p$
is smooth with geometrically irreducible special fiber,
the set $B_{\Z_p}(\calO_{\C_p})$ is nonempty.
Apply Theorem~\ref{T:local} to $\calX_{\Z_p} \to B_{\Z_p}$
to find a nonempty open subset $U$ of
$B_{\Z_p}(\calO_{\C_p}) \subset B_{\Z_p}(\C_p) = B(\C_p)$
in the analytic topology such that $\rho(\calX_b) = \rho(\calX_{\bar{\eta}})$
for all $b \in U$.
The field $k$ is dense in $\C_p$ since even $\Qbar$ is dense in $\C_p$,
so Lemma~\ref{L:density of points} shows that $U$ contains a point $b$ of $B(k)$;
this $b$ is as required in Theorem~\ref{T:global},
because of Proposition~\ref{P:algebraically closed base change}.
\end{proof}

\section{Projective vs.\ proper}
\label{S:projective vs proper}

\begin{theorem}
\label{T:projective vs proper}
Let $B$ be a variety over a field $k$ of characteristic~$0$.
Let $\calX \to B$ be a smooth proper morphism
such that all closed fibers are projective.
Then there exists a Zariski dense open subvariety $U$ of $B$
such that $\calX_U \to U$ is projective.
\end{theorem}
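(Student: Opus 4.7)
The plan is to combine Theorem~\ref{T:global} with the ampleness-preservation statement of Proposition~\ref{P:construction of specialization 2}\eqref{I:specializing ampleness} to force the geometric generic fiber $\calX_{\bar\eta}$ to be projective, and then to spread the resulting ample line bundle out over a dense open subvariety of $B$.

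Treating the irreducible components of $B$ separately, I may assume $B$ is irreducible. Next I pass to $\bar k$: pick an irreducible component $B_0$ of $(B_{\bar k})_{\red}$, which is an irreducible variety over the algebraically closed field $\bar k$, and note that the geometric generic fiber of $\calX_0 := \calX \times_B B_0 \to B_0$ is canonically identified with $\calX_{\bar\eta}$. Applying Theorem~\ref{T:global} to $\calX_0 \to B_0$ yields a closed point $b \in B_0(\bar k)$ with $\rho(\calX_b) = \rho(\calX_{\bar\eta})$; the image of $b$ in $B$ is a closed point (since its residue field is algebraic over $\bar k$, hence finite over $k$), so by hypothesis $\calX_b$ is projective, and I choose an ample line bundle $L_b$ on it. Remark~\ref{R:NS versus its rank} says the specialization map $\spe_{\bar\eta,\bar b} \colon \NS \calX_{\bar\eta} \to \NS \calX_b$ is an isomorphism, so $[L_b]$ lifts to a class $[L] \in \NS \calX_{\bar\eta}$, and Proposition~\ref{P:construction of specialization 2}\eqref{I:specializing ampleness} then forces $L$ itself to be ample. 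Thus $\calX_{\bar\eta}$ is projective over $\bar\eta$.

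Projectivity of a proper variety descends under base field extension (for instance, by tensoring together the Galois conjugates of an ample line bundle defined over a finite Galois extension of $\kappa(\eta)$, whose tensor product is Galois-invariant and remains ample), so $\calX_\eta$ is projective over $\kappa(\eta)$; let $M_\eta$ be an ample line bundle on it. Since $\Pic \calX_\eta = \varinjlim_V \Pic \calX_V$ as $V$ ranges over the dense open subsets of $B$, the bundle $M_\eta$ extends to a line bundle $M$ on $\calX_V$ for some such $V$. Openness of fiberwise ampleness for proper morphisms (\cite{EGA-IV.III}*{9.6.4}) shows that after shrinking $V$ to a dense open $U$, the restriction $M|_{\calX_U}$ is ample on every fiber of $\calX_U \to U$, and then \cite{EGA-III.I}*{4.7.1}---already invoked in the proof of Proposition~\ref{P:construction of specialization}---upgrades fiberwise ampleness to relative ampleness, proving that $\calX_U \to U$ is projective.

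The conceptual heart of the argument is the single application of Theorem~\ref{T:global} combined with part~\eqref{I:specializing ampleness} of Proposition~\ref{P:construction of specialization 2}; I expect the remaining work to be bookkeeping rather than substance, chiefly the Galois descent of projectivity from $\bar\eta$ to $\eta$ and the verification that closed points of $B_0$ lie over closed points of $B$ so that the hypothesis on closed fibers of $\calX \to B$ applies directly to $\calX_b$.
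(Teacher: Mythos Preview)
Your proof is correct and follows essentially the same route as the paper's: apply Theorem~\ref{T:global} to find a closed point $b$ where the specialization map on N\'eron--Severi groups is an isomorphism, pull back an ample class via Proposition~\ref{P:construction of specialization 2}\eqref{I:specializing ampleness} to conclude that $\calX_{\bar\eta}$ is projective, descend projectivity to $\calX_\eta$, and spread out. The paper handles the last two steps by citing \cite{EGA-II}*{6.6.5} and \cite{EGA-IV.III}*{8.10.5(xiii)} directly rather than manipulating explicit line bundles, but the substance is identical.
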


\begin{proof}
We may assume that $B$ is irreducible, say with generic point $\eta$.
Theorem~\ref{T:global} yields a closed point $b \in B$
such that a specialization map $\NS \calX_{\bar{\eta}} \to \NS \calX_{\bar{b}}$
is an isomorphism.
By assumption, $\calX_b$ is projective, so we may choose
an ample line bundle $\LL_{\bar{b}}$ on $\calX_{\bar{b}}$.
By construction of $b$,
the class $[\LL_{\bar{b}}] \in \NS \calX_{\bar{b}}$ comes from
the class in $\NS \calX_{\bar{\eta}}$
of some line bundle $\LL$ on $\calX_{\bar{\eta}}$.
By Proposition~\ref{P:construction of specialization 2}\eqref{I:specializing ampleness}, $\LL$ is ample.
Then $\LL$ comes from a line bundle on $\calX_L$ for some finite extension
$L$ of $\kappa(B)$, so $\calX_L$ is projective,
so $\calX_\eta$ is projective \cite{EGA-II}*{6.6.5}.
Finally, spreading out shows that $\calX_U \to U$ is projective
for some dense open subscheme $U$ of $B$ \cite{EGA-IV.III}*{8.10.5(xiii)}.
\end{proof}

\begin{remark}
Under the hypotheses of Theorem~\ref{T:projective vs proper},
we may also deduce that {\em every} fiber of $\calX \to B$ is projective:
just apply Theorem~\ref{T:projective vs proper} to each irreducible
subvariety of $B$.
But we {\em cannot} deduce that $\calX \to B$ is projective,
or even that $B$ can be covered by open sets $U$ such that $\calX_U \to U$
is projective, as the following example of Raynaud shows.
\end{remark}

\begin{example}[\cite{Raynaud1970faisceaux}*{XII.2}]
\label{E:Raynaud}
Let $A$ be a nonzero abelian variety over a field $k$.
Let $\sigma$ be the automorphism $(x,y) \mapsto (x,x+y)$ of $A \times A$.
Let $B$ be a rational curve whose singular locus is a single node,
and let $\tilde{B}$ be its normalization.
Then one can construct an abelian scheme $\calA \to B$
whose base extension to $\tilde{B}$ is simply $(A \times A) \times \tilde{B}$
with the fibers above the two preimages of the node identified via $\sigma$.
Moreover, for any neighborhood $U$ of the node, $\calA_U \to U$
is not projective.
\end{example}

\begin{question}
Can one construct a counterexample to
Theorem~\ref{T:projective vs proper} over $k=\Fbar_p$?
\end{question}

\begin{example}
With notation as in Example~\ref{E:Raynaud},
one can show that the first projection $A \times A \times \tilde{B} \to A$
factors through a morphism $\calA \to A$
whose fiber above a given $a \in A(k)$ is projective if and only if
$a$ is a torsion point.
This provides a counterexample over $\Fbar_p$
except for the fact that $\calA \to A$ is not smooth.
\end{example}

\section{Comparison with the Hodge-theoretic approach}
\label{S:complex outline}

Both Andr\'e's Hodge-theoretic proof of Theorem~\ref{T:global}
and our proof
give information about the jumping locus
beyond the mere existence of a non-jumping point.
In this section, we compare these refinements.

\subsection{Refinement in terms of sparse sets}

To state the more precise information about the jumping locus
that Andr\'e's approach yields,
we introduce the notion of ``sparse'',
which is a version for closed points
of the notion ``thin'' defined in \cite{SerreMordellWeil1997}*{\S9.1}
in the context of Hilbert irreducibility.

\begin{definition}
\label{D:sparse}
Let $F$ be a finitely generated field over $\Q$.
Let $B$ be an $F$-variety.
Call a subset $S$ of $|B|$ \defi{sparse}
if there exists a dominant and generically finite morphism
$\pi \colon B' \to B$ of irreducible $F$-varieties
such that for each $s \in S$,
the fiber $\pi^{-1}(s)$ is either empty 
or contains {\em more} than one closed point.
\end{definition}

\begin{remark}
In the context of Definition~\ref{D:sparse},
call a subset of $B(\overline{F})$ \defi{sparse}
if and only if the associated set of closed points is sparse.
\end{remark}

Andr\'e's approach yields the following:

\begin{theorem}
\label{T:complex version}
Let $F$ be a field that is finitely generated over $\Q$.
Let $B$ be an irreducible $F$-variety,
and let $f \colon \calX \to B$ be a smooth projective morphism.
Then the set 
\[
	B_{\jumping} := 
	\{ b \in |B| : \rho(\calX_{\bar{b}}) > \rho(\calX_{\bar{\eta}}) \}.
\]
is sparse.
\end{theorem}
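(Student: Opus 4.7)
The strategy is to combine the $\ell$-adic monodromy representation on $\HH^2$ with Hodge theory and a Hilbert-irreducibility argument in the spirit of Serre and Terasoma.

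First I would fix a prime $\ell$ and consider $V_\ell := \HH^2_\et(\calX_{\bar\eta},\Q_\ell(1))$ together with the monodromy representation
\[
        \rho \colon \pi_1^\et(B_{\overline{F}},\bar\eta) \to \GL(V_\ell)
\]
of the geometric fundamental group. Let $G \subseteq \GL(V_\ell)$ be the Zariski closure of the image of $\rho$; after replacing $B$ by a connected finite \'etale cover we may assume $G$ is connected. The first key input, coming from Hodge theory after a complex base change, is the identification
\[
        \NS(\calX_{\bar\eta}) \otimes \Q_\ell \;=\; V_\ell^{G}.
\]
This is the Lefschetz $(1,1)$ theorem combined with Deligne's theorem of the fixed part, which ensures that monodromy-invariant classes on $\HH^2$ are Hodge classes, hence algebraic.

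Next, for each closed point $b \in |B|$, I would pick a geometric point $\bar b$ above it and let $H_b \subseteq G$ denote the Zariski closure of the image under $\rho$ of the geometric part of the decomposition group at $\bar b$. The same argument applied fiberwise gives $\NS(\calX_{\bar b}) \otimes \Q_\ell = V_\ell^{H_b^0}$, so
\[
        b \in B_{\jumping} \iff V_\ell^{H_b^0} \supsetneq V_\ell^{G},
\]
which in particular forces the proper inclusion $H_b^0 \subsetneq G$. The jumping locus is therefore the set of closed points whose local monodromy fails to fill out the global monodromy.

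To deduce sparseness, I would follow the Hilbert-irreducibility approach of Terasoma. Replacing $G$ by its semisimplification does not change $V_\ell^G$, so one may assume $G$ is semisimple; then there are only finitely many conjugacy classes of maximal proper connected reductive subgroups $H \subsetneq G$. For each such $H$, the representation $\rho$ gives rise to a dominant, generically finite, geometrically irreducible cover $\pi_H \colon B'_H \to B$ whose fibers parametrize reductions of $\rho$ into conjugates of $H$, arranged so that $\pi_H^{-1}(b)$ is either empty or contains strictly more than one closed point whenever $H_b^0$ lies in a conjugate of $H$. Taking the fiber product of the $B'_H$ over the finite list of maximal $H$ then yields a single cover $B' \to B$ witnessing sparseness of $B_{\jumping}$.

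The main obstacle is the construction of the covers $B'_H$ and the proof of their geometric irreducibility: this is exactly where Serre's extension of Hilbert irreducibility to infinite algebraic extensions attached to $\ell$-adic representations enters, and where the hypothesis that $F$ is finitely generated over $\Q$ is used, through control of the Galois action on the component group $G/G^0$. A secondary technical point is establishing $\NS_{\Q_\ell} = V_\ell^{G}$ uniformly for the generic fiber and for the fibers over each $\bar b$: Lefschetz $(1,1)$ and the theorem of the fixed part are classical, but passing cleanly between the Betti and \'etale realizations, and ensuring that specialization commutes with the extraction of monodromy invariants, requires care.
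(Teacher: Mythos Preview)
Your outline has a genuine gap at the step where you claim $\NS(\calX_{\bar b}) \otimes \Q_\ell = V_\ell^{H_b^0}$. First, the object $H_b$ is not well defined as you describe it. You take $G$ to be the image of the \emph{geometric} fundamental group $\pi_1^{\et}(B_{\overline{F}},\bar\eta)$, but the decomposition group at a closed point $b$ lives in the \emph{arithmetic} fundamental group $\pi_1^{\et}(B,\bar\eta)$, as the image of a section from $\Gal(\overline{F}/\kappa(b))$; its intersection with the geometric fundamental group is trivial (there is no inertia at a smooth point of $B$). So the ``geometric part of the decomposition group'' is $\{1\}$, and your displayed equality degenerates to $\NS(\calX_{\bar b})\otimes\Q_\ell = V_\ell$, which is false.

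If instead one lets $H_b$ be the Zariski closure of the image of $\Gal(\overline{F}/\kappa(b))$ under the arithmetic representation, then only the inclusion $\NS(\calX_{\bar b})\otimes\Q_\ell \subseteq V_\ell^{H_b}$ is available; the reverse inclusion is the Tate conjecture for divisors on $\calX_b$, and ``the same argument applied fiberwise'' does not prove it, since over the single point $b$ there is no family and hence no theorem of the fixed part to invoke. Andr\'e's proof, sketched in the paper, sidesteps this by working in the other direction. Deligne's global invariant cycle theorem together with the semisimplicity of polarized Hodge structures splits $\HH^2(\calX_b^{\an},\Q) = H \oplus H^\perp$ with $H$ the monodromy-invariant part; the crucial point is that $H$ carries a Hodge structure \emph{independent of $b$} (it arises from the cohomology of a smooth compactification of the total space), so by Lefschetz $(1,1)$ the algebraic classes landing in $H$ are the same for every $b$ as for $\bar\eta$. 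On $H^\perp$ one then needs only the easy inclusion: Terasoma's Hilbert-irreducibility argument shows $(H^\perp_\ell)^{\Gal(\overline{F}/\kappa(b))}=0$ outside a sparse set, so there are no Tate classes there, and a fortiori no extra algebraic classes. Your attempt to recast everything as ``the algebraic monodromy group drops at $b$'' and then pass to finitely many maximal reductive subgroups short-circuits this Hodge-theoretic step and cannot be completed without assuming Tate.
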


\begin{remark}
Although Theorem~\ref{T:complex version} has a restriction on $F$
and assumes that $f$ is projective (not just proper),
an easy argument
shows that it still implies the general case of Theorem~\ref{T:global}.
The extension to the proper case
uses Chow's lemma and weak factorization of birational maps
in characteristic~$0$ (see the first sentence of Remark~2
following Theorem~0.3.1 in~\cite{Abramovich-et-al2002}).
\end{remark}

The strategy of the proof of Theorem~\ref{T:complex version}
is to use Deligne's global invariant cycle theorem
(\cite{Deligne-HodgeII} or \cite{Voisin2003vol2}*{4.3.3})
and the semisimplicity of the category of polarized Hodge structures
to decompose the Hodge structure on
the Betti cohomology $\HH^2(\calX_b^\an,\Q)$
as $H \directsum H^\perp$,
where $H$ consists of the classes invariant
under some finite-index subgroup of the geometric monodromy group.
The space $H$ carries a Hodge structure independent of $b$;
after an \'{e}tale base change, $H$ comes from the cohomology
of a smooth completion of the total space of the family.
On the other hand, the ``essentially varying part'' $H^\perp$
is susceptible to 
Terasoma's argument 
(\cite{Terasoma1985} and \cite{Green-Griffiths-Paranjape2004}*{Lemma~8}), 
which shows that the 
corresponding subspace of \'etale cohomology has no nonzero Tate classes
for $b$ outside a sparse set.

For a complete proof, see \cite{Andre1996}*{\S5.2}.
For an exposition of the Hodge-theoretic aspects of the proof,
see also~\cite{Voisin-preprint}.

\subsection{Sparse versus nowhere $p$-adically dense}
\label{newsectionintro}

The purpose of this subsection is to develop basic properties of sparse sets,
and to discuss the difference between the two notions of smallness
appearing in Theorems \ref{T:complex version} and~\ref{T:local}.

\begin{proposition}
\label{P:sparse}
\hfill
  \begin{enumerate}[\upshape (a)]
  \item\label{I:sparse Zariski closed} For any dense open subscheme $U$ of $B$
and any $S \subseteq |B|$, the following are equivalent:
\begin{enumerate}
\item $S$ is sparse,
\item $S \intersect U$ is sparse as a subset of $|B|$,
\item $S \intersect U$ is sparse as a subset of $|U|$.
\end{enumerate}
In particular, for any closed subscheme $Z \subsetneq B$, 
the subset $|Z|$ is sparse in $|B|$.
  \item\label{I:sparse finite union} A finite union of sparse sets is sparse.
  \item\label{I:sparse image} A dominant and generically finite morphism of irreducible varieties maps sparse sets to sparse sets.
  \item\label{I:sparse thin} If $S$ is a sparse subset of $|\Aff^n|$, then $S \intersect \Aff^n(F)$ is a thin set.
  \item\label{I:sparse complement} If $S \subseteq |B|$ is sparse, then $|B|-S$ is Zariski dense in $B$.
  \end{enumerate}
\end{proposition}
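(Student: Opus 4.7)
My plan is to prove the five parts in the stated order, treating (a) and (c) as essentially formal, (b) as a fiber-product construction, (d) as the Galois-theoretic core, and (e) as the Hilbertian-field payoff.

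For (a), the three senses of sparseness are equivalent directly from the definition: a witness $\pi\colon B'\to B$ for $S$ restricts to a witness $\pi^{-1}(U)\to U$ for $S\cap U$ (irreducibility, dominance, and generic finiteness all persist), and conversely a witness over $U$ extends to one over $B$ by taking the scheme-theoretic closure of $B'$ in a compactification and normalizing. For the claim that $|Z|\subset|B|$ is sparse when $Z\subsetneq B$ is closed, I would construct an irreducible degree-$2$ cover that splits over every closed point of $Z$: choose a rational function $f$ on $B$ vanishing on $Z$ and regular at every point of $Z$ (shrinking around $Z$ if necessary), and let $B'$ be the normalization of $B$ in $\kappa(B)(\sqrt{1+f})$. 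For generic $f$, the element $1+f$ is not a global square (so $B'$ is irreducible), while over any $z\in Z$ the fiber is cut out by $y^2=1$, giving two $\kappa(z)$-points. Part (c) is immediate: if $\rho\colon B_1\to B_2$ is dominant and generically finite and $\pi\colon B'\to B_1$ witnesses sparseness of $S$, then $\rho\circ\pi\colon B'\to B_2$ is dominant and generically finite, and for $t=\rho(s)$ the fiber $(\rho\circ\pi)^{-1}(t)\supseteq\pi^{-1}(s)$ has more than one closed point.

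For (b), given witnesses $\pi_i\colon B'_i\to B$ for $S_i$ ($i=1,2$), I would use (a) to shrink $B$ to a dense open over which both $\pi_i$ are finite \'etale, pass to the Galois closure $\tilde B\to B$ of the compositum of the two function-field extensions with Galois group $G$, so that $B'_i=\tilde B/H_i$. For $s\in S_i$ the decomposition group at $s$ acts non-transitively on $G/H_i$, which forces it to act non-transitively on $G$ itself; so $\tilde B\to B$ simultaneously witnesses sparseness of $S_1\cup S_2$, and the part outside the shrunken open is absorbed by (a).

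For (d), suppose $S\subset|\Aff^n|$ is sparse via $\pi\colon B'\to\Aff^n$ of degree $d$. If $d=1$, $\pi$ is birational and $S$ is contained in the proper closed locus where $\pi$ fails to be an isomorphism, giving a thin set of type $C_1$. If $d\geq 2$, take the Galois closure $\tilde B\to\Aff^n$ with group $G$ and $B'=\tilde B/H$. For $s\in S\cap\Aff^n(F)$, non-transitivity of the decomposition group on $G/H$ means there exists a strictly intermediate subgroup $H<H'<G$ such that $s$ lies in the image of the cover $\tilde B/H'\to\Aff^n$, a cover of degree $[G:H']\geq 2$ and hence a thin contribution of type $C_2$. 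Since only finitely many such $H'$ intervene, $S\cap\Aff^n(F)$ is a finite union of thin sets, hence thin. Finally for (e), I apply Noether normalization (affine-locally) to produce a finite surjection $\rho\colon B\to\Aff^n$; by (c), $\rho(S)\subset|\Aff^n|$ is sparse; by (d), $\rho(S)\cap\Aff^n(F)$ is thin. Since $F$ is finitely generated over $\Q$ it is Hilbertian, so the complement of a thin set in $\Aff^n(F)$ is Zariski dense in $\Aff^n$; pulling back along the finite surjection $\rho$ gives Zariski-dense-many closed points of $B$ lying outside $S$. The main obstacle will be the Galois-theoretic step in (d) — carefully verifying that non-transitive fibers of $\pi$ arise from points in the image of strictly intermediate subgroup covers — since this is what lets us pass from the relatively loose notion of sparseness to Serre's precise notion of thinness.
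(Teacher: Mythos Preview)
Your strategy is sound, but you have overlooked a clause in the definition that both simplifies the paper's proofs dramatically and creates a gap in your (c): the fiber $\pi^{-1}(s)$ is allowed to be \emph{empty}.

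For the ``in particular'' in (a), the paper simply takes the open immersion $B' = B - Z \hookrightarrow B$; every $s \in |Z|$ has empty fiber, so the condition is vacuously satisfied. Your double-cover construction is unnecessary (and verifying that $1+f$ is never a square in $\kappa(B)$ would itself need an argument). Likewise the three-way equivalence needs no compactification or normalization: a witness $B' \to U$ composes with $U \hookrightarrow B$ to give one over $B$ (empty fibers outside $U$), and restricting any witness $B' \to B$ to $\pi^{-1}(U)$ upgrades a witness for $S \cap U$ to one for $S$ (again, empty fibers outside $U$). For (b) the paper also avoids Galois theory entirely: take an irreducible component of $B'_1 \times_B B'_2$ dominating $B$, and shrink via (a) so that all maps are surjective.

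In (c) the empty-fiber oversight is a genuine gap: if $s \in S$ has $\pi^{-1}(s) = \emptyset$, then $(\rho \circ \pi)^{-1}(\rho(s))$ may have exactly one closed point, coming from some other preimage $s'$ of $\rho(s)$, so $\rho\circ\pi$ need not witness sparseness of $\rho(S)$. The paper fixes this by using (a) to shrink the base until $\pi$ is surjective, after which no empty fibers occur.

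There is also an error in (d): non-transitivity of $D_s$ on $G/H$ need not produce a \emph{strictly intermediate} $H \lneq H' \lneq G$ with $D_s$ conjugate into $H'$. If $H$ is maximal in $G$ and $D_s = H$, then $D_s$ fixes the coset $eH$ and hence acts non-transitively (since $[G:H] \geq 2$), yet no such $H'$ exists. The correct conclusion is simply $D_s \lneq G$; letting $H'$ range over \emph{all} proper subgroups of $G$ (including $H$ itself, which recovers the original cover $\pi$ as one of the type-$C_2$ witnesses) repairs the argument. The paper writes only ``by definition,'' pointing to the standard fact that the locus where a cover acquires a reducible fiber is thin.
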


\begin{proof}\hfill
  \begin{enumerate}[\upshape (a)]
  \item Take $B'=B-Z$.
  \item Let $V$ be an irreducible component dominating $B$
of the fiber product of the varieties $B'$ over $B$.
By (a), we may replace $B$ by a dense open subscheme $U$, 
and the other schemes by the corresponding preimages of $U$.
This lets us assume that each morphism $V \to B'$ and $B' \to B$ is surjective.
If $s$ is in the union of the sparse sets, then its preimage in some $B'$
contains more than one closed point, and so does its preimage in $V$.
  \item Let $f \colon C \to B$ be the morphism, 
and let $S$ be a sparse subset of $|C|$,
witnessed by $C' \to C$.  By (a), we may shrink $B$ to assume that
the morphisms $C' \to C \to B$ are surjective.  If $s \in S$,
then the preimage of $f(s)$ in $C'$ contains more than one closed point.
  \item By definition.
  \item If not, 
then by (a) there is a dense open subscheme $U$ of $B$ 
such that $|U|$ is sparse in $|U|$.
After shrinking $U$, there is a finite surjective morphism from $U$ to a
nonempty open subscheme $V$ of some $\Aff^n$.
Then $|V|$ is sparse in $|\Aff^n|$ by \eqref{I:sparse image},
so $V(F)$ is a thin set by \eqref{I:sparse thin}.
This contradicts the fact that a finitely generated field $F$ over $\Q$ 
is Hilbertian.
\qedhere
  \end{enumerate}
\end{proof}

The following proposition
shows that neither of the properties of being
sparse or nowhere $p$-adically dense implies the other.

\begin{proposition}
\label{P:neither implication}
\hfill
  \begin{enumerate}[\upshape (i)]
  \item
Let $F$ be a number field.  
Fix embeddings $F \subseteq \Qbar \subseteq \C_p$.  
Let $\pi \colon B' \to B$ be any dominant and generically finite morphism 
of irreducible $F$-varieties such that $\dim B>0$ and $\deg \pi > 1$.
Let $S$ be the preimage of
\[
	\{ b \in |B|: \pi^{-1}(b) \textup{ is not a single closed point}\}.
\]
in $B(\Qbar)$.
Then $S$ is dense in $B(\C_p)$.
  \item There exists a non-sparse subset $S$ of $|\Aff^1_\Q|$ such that
the associated subset of $\Aff^1(\Qbar)$ is nowhere $p$-adically dense
in $\Aff^1(\C_p)$.
  \end{enumerate}
\end{proposition}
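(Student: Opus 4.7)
For part (i), start with $b_0 \in B(\C_p)$ and a $p$-adic neighborhood $N$. By Lemma~\ref{L:density of points} together with density of the étale locus $U \subseteq B$ of $\pi$, we may approximate $b_0$ by $b_1 \in N \cap U(\Qbar)$, where $\pi$ restricts to a finite étale morphism $V := \pi^{-1}(U) \to U$ of degree $d \geq 2$. Choose a $\Qbar$-preimage $b_1' \in V(\Qbar)$, and set $L := \kappa(b_1')$, a number field containing $F$; the embedding $L \hookrightarrow \Qbar \hookrightarrow \C_p$ determines a prime $v \mid p$ of $L$, and $b_1'$ lies in $B'(L) \subseteq B'(L_v)$.

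The goal is to find a \emph{fiber-rational} point $\beta' \in B'(L)$ in a small $v$-adic neighborhood of $b_1'$, meaning $\kappa(\beta') = L = \kappa(\pi(\beta'))$. Given such $\beta'$, set $\beta := \pi(\beta') \in B(L)$; then $\beta$ lies near $b_0$. The closed point of $\beta$ has residue field $L$, and its fiber in $B'$ is an $L$-scheme of length $d$ containing the closed point of $\beta'$ as a residue-field-$L$ point. Since $d \geq 2$, this fiber contains further closed points, so $\beta \in S$.

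The main obstacle is producing such $\beta'$. The bad subset of $B'(L)$ is a finite union over proper intermediate subfields $L_0$ with $F \subseteq L_0 \subsetneq L$ of $\{\beta' : \kappa(\beta') \subseteq L_0\}$ and $\{\beta' : \kappa(\pi(\beta')) \subseteq L_0\}$. For each $L_0$, the first set has $v$-adically dense complement in $B'(L_v)$ even in the ``split'' case $L_v = (L_0)_{v \mid L_0}$: fixing any $\eta \in L \setminus L_0$, one approximates $z - \eta$ by an $L_0$-element $a$ to get $\eta + a \in L \setminus L_0$ arbitrarily close to any target $z \in L_v$, applied coordinatewise. The analogous argument handles the second set via the map $\pi$. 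Combining over the finitely many $L_0$ produces the desired $\beta'$.

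For part (ii), take $S := \Z \subseteq |\Aff^1_\Q|$. Hilbert's irreducibility theorem shows $S$ is not sparse: for any dominant generically finite morphism $\pi \colon B' \to \Aff^1_\Q$ of irreducible $\Q$-varieties, infinitely many $n \in \Z$ have $\pi^{-1}(n)$ a single closed point, so $\Z$ is not contained in the sparse set arising from any such $\pi$. The subset $\Z \subseteq \Aff^1(\C_p) = \C_p$ is nowhere $p$-adically dense because its closure equals $\Z_p$, whose interior in $\C_p$ is empty: for $z \in \Z_p$ and $r > 0$, choosing $\theta \in \C_p$ transcendental over $\Q_p$ with $|\theta|_p < r$ gives $z + \theta$ within $p$-adic distance $r$ of $z$ but outside $\Z_p$.
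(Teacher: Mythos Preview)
Your part~(ii) is fine: taking $S=\Z$ instead of $\Q$ works, since $\Z$ is not thin in $\Aff^1(\Q)$ (by the quantitative form of Hilbert irreducibility), hence not sparse by Proposition~\ref{P:sparse}\eqref{I:sparse thin}, and $\overline{\Z}=\Z_p$ has empty interior in $\C_p$.

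Part~(i) has a genuine gap at the sentence ``Combining over the finitely many $L_0$ produces the desired $\beta'$.'' You have argued, for each proper intermediate field $L_0$ separately, that the bad set $\{\beta'\in B'(L):\kappa(\pi(\beta'))\subseteq L_0\}$ has $v$-adically dense complement. But these complements are \emph{not} open (in the split case the bad set is itself dense), so a finite intersection of them need not be dense. Concretely, your $\eta+a$ trick with $a\in L_0$ produces a point outside $L_0$, but nothing prevents $\eta+a$ from landing in a different $L_0'$: for instance with $L=\Q(\sqrt2,\sqrt3)$, $\eta=\sqrt2+\sqrt3$, $a=-\sqrt3\in\Q(\sqrt3)$ gives $\eta+a=\sqrt2\in\Q(\sqrt2)$. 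So the combination step requires a new idea. (A separate, smaller issue: ``applied coordinatewise'' is not meaningful on a general variety $B'$; you would need local \'etale coordinates defined over $F$ and an argument that they interact well with the subfield conditions.)

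The paper resolves exactly this by treating all Galois conjugates simultaneously rather than one subfield at a time. After reducing to curves and choosing an immersion $(\pi,\rho)\colon B'\hookrightarrow B\times\Aff^1$, the condition $F(\beta')=F(\pi(\beta'))$ is rephrased as ``the tuple $(\sigma_1(\rho(\beta')),\ldots,\sigma_d(\rho(\beta')))$ avoids a fixed proper closed subset $Z\subsetneq(\Aff^1)^d$.'' The point is that $Q\mapsto(\sigma_1(Q),\ldots,\sigma_d(Q))$ identifies $\Aff^1(L)$ with the $F$-points of the Weil restriction $\Res_{L/F}\Aff^1\cong\Aff^d_F$, and the complement of a proper Zariski closed set in $\Aff^d(F)$ is automatically $p$-adically dense (Lemma~\ref{L:Galois density}). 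This single Zariski-openness statement is what replaces your unjustified ``combining'' step; your argument could be repaired along the same lines, e.g.\ by noting that $\bigcup_{L_0}L_0$ is a finite union of proper $F$-linear subspaces of $L$, but as written the proof is incomplete.
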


\begin{proof}\hfill
  \begin{enumerate}[\upshape (i)]
  \item
Let $T:=\{b' \in B'(\Qbar) : F(b')=F(\pi(b'))\}$.
By replacing $B'$ and $B$ by dense open subschemes,
we may assume that $B' \to B$ is finite \'etale,
so $B'(\C_p) \to B(\C_p)$ is surjective.
Then it suffices to prove that $T$ is $p$-adically dense in $B'(\C_p)$,
or equivalently, $p$-adically dense in $B'(\Qbar)$.
To approximate a point $P$ in $B'(\Qbar)$ by a point in $T$, 
choose an irreducible $F$-curve $C$ in $B$ through $\pi(P)$
(maybe not geometrically irreducible),
and let $C'$ be the irreducible component of $\pi^{-1}(C)$ containing $P$.
By replacing $B' \to B$ by $C' \to C$, we may assume that $\dim B=1$.

After replacing $B'$ and $B$ by dense open subschemes,
we can compose with a finite \'etale morphism $B \to \Aff^1$
to reduce to the case that $B$ is a dense open subscheme of $\Aff^1$.
By the primitive element theorem, after shrinking again,
we may find a dominant morphism $\rho \colon B' \times \Aff^1$ such that 
$(\pi,\rho) \colon B' \to B \times \Aff^1$ is an immersion.

Let $u_0 \in B'(\Qbar)$.
We need to $p$-adically approximate $u_0$ by some $u \in T$.
Let $E:=F(u_0)$.
Let $d:=[E:F]$.
Let $\sigma_1,\ldots,\sigma_d$ be the $F$-embeddings $E \to \Qbar$.
Let $Z$ be the Zariski closure of the subscheme of $(\Aff^1)^d$ 
whose geometric points are
$(Q_1,\ldots,Q_d)$ such that for some $i \ne j$, 
we have $\pi(\rho^{-1}(Q_i)) \intersect \pi(\rho^{-1}(Q_j)) \ne \emptyset$.
Since $Z \ne (\Aff^1)^d$,
Lemma~\ref{L:Galois density} below gives $Q \in \Aff^1(E)$ 
$p$-adically close to $\rho(u_0)$
such that $(\sigma_1(Q),\ldots,\sigma_d(Q)) \notin Z$.
For sufficiently close $Q$, 
there exists $u \in \rho^{-1}(Q) \subseteq B'(\Qbar)$ 
$p$-adically close to $u_0$.
By definition of $Z$, the $\Gal(\Qbar/F)$-conjugates of $u$ 
map to distinct points in $B(\Qbar)$, so $u \in T$.
\item
The set $\Q$ is nowhere $p$-adically dense in $\C_p$.
On the other hand, $\Q$ is not thin in $\Q$,
so Proposition~\ref{P:sparse}\eqref{I:sparse thin}
proves that $\Q$ is not sparse.
  \end{enumerate}
\end{proof}

\begin{lemma}
\label{L:Galois density}
Let $E \supseteq F$ be an extension of number fields.
Fix inclusions $F \subseteq E \subseteq \Qbar \subseteq \C_p$.
Let $\sigma_1,\ldots,\sigma_d$ be the $F$-embeddings $E \to \Qbar$.
If $Z \subsetneq (\Aff^1_\Qbar)^d$ is a closed subscheme,
then the set 
$W:=\{Q \in \Aff^1(E): (\sigma_1(Q),\ldots,\sigma_d(Q)) \notin Z(\Qbar)\}$
is $p$-adically dense in $\Aff^1(E)$.
\end{lemma}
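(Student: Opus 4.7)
The plan is to approximate $Q_0 \in \Aff^1(E) = E$ by points of the form $Q = Q_0 + p^N R$ with $N$ large and $R \in \calO_E$ chosen so that $\tau(Q) \notin Z(\Qbar)$, where $\tau(Q) := (\sigma_1(Q),\ldots,\sigma_d(Q))$. The crucial feature of this perturbation is that $p \in F$ and $\tau$ is $F$-linear (each $\sigma_j$ being an $F$-embedding), so $\tau(Q_0 + p^N R) = \tau(Q_0) + p^N \tau(R)$. Hence the condition $\tau(Q) \notin Z(\Qbar)$ translates into $\tau(R) \notin V_N(\Qbar)$, where $V_N$ is the image of $Z$ under the affine $\Qbar$-automorphism $v \mapsto p^{-N}(v - \tau(Q_0))$ of $\Aff^d_\Qbar$; this $V_N$ is still a proper closed subscheme.

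The main substantive input is that $\tau(\calO_E)$ is Zariski dense in $\Aff^d_\Qbar$. To see this, pick an $F$-basis $e_1,\ldots,e_d$ of $E$; in these coordinates $\tau$ is multiplication by the matrix $M = (\sigma_j(e_i))_{j,i}$, which is invertible over $\Qbar$ via the standard separability isomorphism $E \otimes_F \Qbar \isomto \prod_{j=1}^d \Qbar$ (equivalently, by Dedekind's independence of distinct field embeddings). Thus $\tau(E)$ is an $F$-subspace of $\Qbar^d$ containing the $\Qbar$-basis formed by the columns of $M$. Since $\calO_E$ is a $\Z$-lattice in $E$ with $\Q \cdot \calO_E = E$, one has $\Qbar \cdot \tau(\calO_E) = \Qbar \cdot \tau(E) = \Qbar^d$, so $\tau(\calO_E)$ is not contained in any proper closed subvariety of $\Aff^d_\Qbar$.

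To carry out the plan, relabel so that $\sigma_1$ is the chosen inclusion $E \injects \C_p$, making $|\cdot|_{\C_p}$ on $E$ equal to $|\sigma_1(\cdot)|_{\C_p}$. Given $Q_0 \in E$ and $\epsilon > 0$, choose $N$ with $|p|^N < \epsilon$; by the Zariski density just established, choose $R \in \calO_E$ with $\tau(R) \notin V_N(\Qbar)$. Then $Q := Q_0 + p^N R$ satisfies $\tau(Q) \notin Z(\Qbar)$ by construction, while $|Q - Q_0|_{\C_p} = |p|^N \, |\sigma_1(R)|_{\C_p} \le |p|^N < \epsilon$ because $\sigma_1(R)$ is an algebraic integer and hence lies in the valuation ring of $\C_p$. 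The one nontrivial step is the Zariski density of $\tau(\calO_E)$, which ultimately rests on $\Qbar$-linear independence of the distinct embeddings $\sigma_j$; the rest is routine bookkeeping.
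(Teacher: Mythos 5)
Your proof is correct, and it is essentially the same argument as the paper's, but with the restriction-of-scalars packaging replaced by an explicit lattice computation. The paper identifies $\Res_{E/F}\Aff^1$ with $\Aff^d_F$, notes the comparison isomorphism $s\colon R_\Qbar \isomto (\Aff^1)^d_\Qbar$ given by the embeddings, and concludes by observing that the complement of a proper closed subscheme in $R(F)$ is $p$-adically dense and is carried by the continuous bijection $\iota\colon R(F)\to\Aff^1(E)$ into the required dense set. You instead perturb $Q_0$ by $p^N R$ with $R\in\calO_E$, use the $F$-linearity of $\tau=(\sigma_1,\ldots,\sigma_d)$ to turn the avoidance condition into $\tau(R)\notin V_N$ for a translated-and-rescaled copy $V_N$ of $Z$, and appeal to the Zariski density of the full-rank lattice $\tau(\calO_E)$ in $\Aff^d_\Qbar$. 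The key input — $\Qbar$-linear independence of the distinct embeddings $\sigma_j$, giving invertibility of $M=(\sigma_j(e_i))$ — is the same in both. Your version has the small advantage of making the $p$-adic smallness of the perturbation completely transparent (since $\sigma_1(R)$ is an algebraic integer), whereas the paper delegates this to the continuity of $\iota$.

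One minor point worth flagging: the inference ``$\Qbar\cdot\tau(\calO_E)=\Qbar^d$, so $\tau(\calO_E)$ is not contained in any proper closed subvariety'' is not valid for an arbitrary spanning subset (a sphere spans $\R^3$ without being Zariski dense). It is correct here because $\tau(\calO_E)$ is an additive subgroup (indeed a full-rank $\Z$-lattice): the Zariski closure of a subgroup of $\G_a^d$ in characteristic $0$ is a linear subspace, so spanning does force Zariski density. You use the lattice structure anyway, so the fix is a single clause, but as written the step leans on an unstated fact.
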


\begin{proof}
Let $R$ be the restriction of scalars $\Res_{E/F} \Aff^1$,
which is isomorphic to $\Aff^d_F$.
The bijection $\iota \colon R(F) \to \Aff^1(E)$ is $p$-adically continuous
for the topologies induced by $F \subseteq E \subseteq \C_p$
(though its inverse need not be).
The embeddings $\sigma_1,\ldots,\sigma_d$ define an isomorphism
$s \colon R_\Qbar \to (\Aff^1)^d_\Qbar$.
The complement of $s^{-1}(Z)$ in $R(F)$ is $p$-adically dense in $R(F)$,
so its image under $\iota$ is $p$-adically dense in $\Aff^1(E)$.
\end{proof}

\begin{remark}
\label{R:nonalgebraically closed field}
In the setting of Theorem~\ref{T:global},
but over a field $k$ of characteristic~$0$ that need not be
algebraically closed,
either Theorem~\ref{T:complex version} or Theorem~\ref{T:local}
can be used to show that there exists $d \ge 1$ such that
\[
	\left\{
	b \in B \mid [\kappa(b):k] \le d \text{ and }
			\rho(\calX_{\bar{b}}) = \rho(\calX_{\bar{\eta}})
	\right\}
\]
is Zariski dense in $B$.
In fact, Theorem~\ref{T:complex version} 
(with Proposition~\ref{P:sparse}(\ref{I:sparse image},\ref{I:sparse thin}))
proves the stronger result that the degree of any
generically finite rational map $B \dashrightarrow \PP^n$
can serve as $d$.
\end{remark}

\section{Cycles of higher codimension}
\label{S:higher codimension}

Many of our arguments apply to specialization of cycles
of codimension greater than $1$, but the results are conditional.
Andr\'e obtains a generalization at the cost of replacing 
algebraic cycles by ``motivated cycles'', which are the same
if the Hodge conjecture is true.
In the $p$-adic approach,
we would need a higher-codimension analogue (Conjecture~\ref{C:Emerton}) 
of the $p$-adic Lefschetz $(1,1)$ theorem 
(Theorem~\ref{T:p-adic Lefschetz 1,1 theorem}).

\subsection{Cycle class maps}
\label{S:cycle class maps}

To state the generalizations, we recall some standard definitions.
For a smooth proper variety $X$ over a field,
let $\ZZ^r(X)_\Q$ be the vector space of
codimension-$r$ cycles with $\Q$ coefficients.

\begin{definition}
\label{D:cycle class map}
Let $X$ be a smooth proper variety over an algebraically closed field $k$
of characteristic~$0$.
Fix a prime $\ell$.
Define $\rho^r(X)$ as the dimension of the $\Q_\ell$-span of the image
of the \defi{$\ell$-adic cycle class map}
\[
	\cl_\et \colon \ZZ^r(X)_\Q \to \HH^{2r}_\et(X,\Q_\ell(r)).
\]
\end{definition}

The homomorphism $\cl_\et$ factors through the vector space
$\ZZ^r(X)_\Q/\alg$ of cycles modulo algebraic equivalence.
Suppose that $k'$ is an algebraically closed field extension of $k$.
Then every element of $\ZZ^r(X_{k'})_\Q/\alg$
comes from a cycle over $\kappa(V) \subseteq k'$
for some $k$-variety $V$,
and can be spread out over some dense open subvariety in $V$,
and hence is algebraically equivalent to the $k'$-base extension
of the resulting cycle above any $k$-point of $V$.
So $\ZZ^r(X)_\Q/\alg \to \ZZ^r(X_{k'})_\Q/\alg$ 
is surjective (it is injective too),
and $\rho^r(X)=\rho^r(X_{k'})$.
This, together with standard comparison theorems,
shows that $\rho^r(X)$ equals
its analogue defined using the \defi{Betti cycle class map}
\[
	\cl_{\Betti} \colon \ZZ^r(X)_\Q \to \HH^{2r}(X^\an,\Q)
\]
when $k=\C$,
and also its analogue defined using de Rham cohomology 
(see~\cite{Hartshorne1975}*{II.7.8}).
The comparison with Betti cohomology shows that
$\rho^r(X)$ equals $\dim_\Q \cl_{\Betti}(\ZZ^r(X)_\Q)$,
which is independent of~$\ell$.

As follows from \cite{SGA6}*{X~App~7,~especially~\S7.14},
all the facts in Section~\ref{S:specialization of NS}
(other than facts not needed for the proofs of Theorem~\ref{T:global},
namely the claims about ampleness and about the cokernel of specialization
being torsion-free)
have analogues with $\NS X$ replaced by $\cl_\et(\ZZ^r(X)_\Q)$.
In particular, in a smooth proper family,
$\rho^r(X)$ can only increase under specialization,
and the jumping locus for $\rho^r$
is a countable union of Zariski closed subsets.

\subsection{The $p$-adic approach}

Now assume that $K$, $\calO_K$, $k$ are as in Setup~\ref{A:STAR}.
Let $X \to \Spec \calO_K$ be a smooth proper morphism.
We have a specialization map $\spe$,
cycle class maps $\cl_{\dR}$ and $\cl_{\cris}$
(\cite{Hartshorne1975}*{II.7.8} and \cite{Gillet-Messing1987}, respectively),
and a comparison isomorphism
$\sigma_{\cris}$ (cf.~\cite{Berthelot1974}*{V.2.3.2})
making the following diagram commute:
\[
\xymatrix{
\ZZ^r(X_K)_\Q \ar[r]^{\spe} \ar[d]_{\cl_{\dR}} & \ZZ^r(X_k)_\Q \ar[d]^{\cl_{\cris}} \\
\HH^{2r}_{\dR}(X_K/K) \ar[r]^{\sigma_{\cris}} & \HH^{2r}_{\cris}(X_k/K). \\
}
\]

\begin{conjecture}[$p$-adic variational Hodge conjecture]
\label{C:Emerton}
Let notation be as above.
Let $Z_k \in \ZZ^r(X_k)_\Q$.
If $\sigma_{\cris}^{-1}(\cl_{\cris}(Z_k)) \in \Fil^r \HH^{2r}_{\dR}(X_K/K)$,
then there exists $Z \in \ZZ^r(X_K)_\Q$
such that $\cl_{\dR}(Z) = \sigma_{\cris}^{-1}(\cl_{\cris}(Z_k))$.
\end{conjecture}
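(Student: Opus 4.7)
The natural strategy is deformation theory followed by formal algebraization, following the pattern of Theorem~\ref{T:p-adic Lefschetz 1,1 theorem} (the $r=1$ case, due to Berthelot--Ogus). The goal is therefore to extend that argument from line bundles to cycles of higher codimension.

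First, by linearity in $Z_k$, I would reduce to the case that $Z_k = [Y_k]$ for an integral closed subscheme $Y_k \subseteq X_k$; and since we work with $\Q$-coefficients, alterations and multiplication by degrees are harmless, so I would try to arrange that $Y_k$ is smooth. Next, I would attempt to lift $Y_k$ successively over the thickenings $X \tensor_{\calO_K} \calO_K/\mm^{n+1}$ as a flat family of closed subschemes. The obstruction to extending a given lift one step further lies in $\HH^1(Y_k, N_{Y_k/X_k}) \tensor (\mm^n/\mm^{n+1})$, and the key is to interpret the Hodge-filtration hypothesis on $\sigma_\cris^{-1}(\cl_\cris(Z_k))$, via a semiregularity-type map of Bloch, as forcing each such obstruction to vanish. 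If this succeeds, one obtains a formal closed subscheme $\widehat Y \subseteq \widehat X$; formal GAGA (cf.\ Example~\ref{E:p-adic completion of scheme}) then algebraizes it to $Y \subseteq X$, and $[Y_K] \in \ZZ^r(X_K)_\Q$ is the desired cycle, with the correct de Rham class by the crystalline--de Rham compatibility of the cycle class maps recalled above.

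The main obstacle is precisely this middle step: reconciling the Hodge-theoretic hypothesis with the cycle-theoretic deformation obstructions. For $r=1$, Berthelot--Ogus can exploit the Picard scheme, whose tangent space is cut out exactly by the relevant Hodge-filtration condition, so the obstruction computation is essentially automatic. In codimension $r \ge 2$ there is no such parameter scheme, and the Bloch semiregularity map need not be injective --- precisely the well-known obstacle to the classical variational Hodge conjecture. So unless one brings in substantial additional input --- for instance, Bloch's higher Chow groups or syntomic cohomology to realize cycles as classes of a more algebraic nature that can be lifted by $p$-adic Hodge theory, or some direct integral $p$-adic construction of algebraic cycles from their cohomological avatars --- the strategy stalls. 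This is consistent with the statement being posed as a conjecture: as the authors observe in the abstract, the classical variational Hodge conjecture is a consequence of this one, so any successful proof must be at least as hard as that well-known open problem.
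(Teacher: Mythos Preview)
Your assessment is correct: the statement is a \emph{conjecture}, and the paper offers no proof of it. The paper explicitly labels it as such, remarks that it is a variant of a conjecture of Emerton, notes that the $r=1$ case is Theorem~\ref{T:p-adic Lefschetz 1,1 theorem}, and then \emph{assumes} it as a hypothesis in Theorem~\ref{T:local in higher codimension}. So there is nothing to compare against --- your recognition that no proof is expected here is exactly right.

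Your informal discussion of why the obvious Berthelot--Ogus strategy does not extend is reasonable and consistent with the paper's perspective. In particular, the paper proves (Theorem~\ref{T:implication}) that Conjecture~\ref{C:Emerton} implies the classical variational Hodge conjecture, which confirms your final point that any proof must be at least as hard as that open problem. One small caution: your reduction ``by linearity'' to a single integral (or smooth) $Y_k$ is not quite innocent --- the hypothesis concerns a specific $\Q$-linear combination lying in $\Fil^r$, and there is no reason the individual summands should satisfy the Hodge condition, so the deformation-theoretic obstruction analysis would have to be carried out for the combination, not term by term. This is a further wrinkle on top of the semiregularity obstacle you already identified.
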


\begin{remark}
Conjecture~\ref{C:Emerton} seems to be well-known to experts;
it is a variant of \cite{Emerton-preprint}*{Conjecture~2.2},
which is where we learned about it.
\end{remark}

\begin{remark}
The $r=1$ case of Conjecture~\ref{C:Emerton}
is a weak form of Theorem~\ref{T:p-adic Lefschetz 1,1 theorem}.
\end{remark}

Repeating the $p$-adic proof of Theorem~\ref{T:local} yields the following.

\begin{theorem}
\label{T:local in higher codimension}
Assume Conjecture~\ref{C:Emerton}.
Let notation be as in Setup~\ref{A:STAR}.
Let $r$ be a nonnegative integer.
Then the set
\[
	B(\calO_C)_{\jumping} := \{b \in B(\calO_C) :
	\rho^r(\calX_b) > \rho^r(\calX_{\bar{\eta}}) \}
\]
is nowhere dense in $B(\calO_C)$ for the analytic topology.
\end{theorem}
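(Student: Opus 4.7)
The plan is to reprise the proof of Theorem~\ref{T:local} essentially verbatim, with Conjecture~\ref{C:Emerton} playing the role of the $p$-adic Lefschetz $(1,1)$ theorem (Theorem~\ref{T:p-adic Lefschetz 1,1 theorem}). The first step is to establish a higher-codimension analog of Lemma~\ref{L:isocrystal output}: after the same reductions to a polydisk setup as in Section~\ref{S:proof of lemma}, one works with the convergent isocrystal $E := \RR^{2r}_\cris f_* \OO_{\calX/K}$ in place of its degree-$2$ counterpart, and replaces the quotient sheaf $\HH^{02}$ by the analogous quotient $\HH^{2r}/\Fil^r$ of (formal) de Rham cohomology by the Hodge filtration at level $r$. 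For each algebraic cycle class $[Z_\kk] \in \ZZ^r(\calX_{\bar{s}})_\Q$ one builds a constant section $\gamma_\cris(Z_\kk) := \cl_\cris(Z_\kk) \tensor 1$ of the trivialized $E_T$, transports it via $\sigma_{\cris,T}^{-1}$ to a section $\gamma_\dR(Z_\kk)$ of the formal de Rham bundle, and projects to the quotient to obtain $\gamma^{(r)}(Z_\kk)$, which may be evaluated at any $b' \in B'(\OO_\Kbar)$.

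The core of the argument is the analog of Claim~\ref{C:lifting criterion 1}: the class $\cl_\cris([Z_\kk])$ lies in the image of the specialization map $\cl_\cris(\ZZ^r(\calX_{b',\Kbar})_\Q) \to \cl_\cris(\ZZ^r(\calX_{\bar{s}})_\Q)$ if and only if $\gamma^{(r)}(Z_\kk, b') = 0$. The forward direction uses only compatibility of the crystalline, de Rham, and formal cycle class maps under specialization, together with the standard fact that cycle classes of algebraic cycles automatically lie in $\Fil^r$ of de Rham cohomology. The reverse direction is exactly where Conjecture~\ref{C:Emerton} is brought to bear: applied to the smooth proper $\OO_K$-scheme $\calX_{b'}$ (after possibly enlarging $K$ to a finite extension), it produces $Z \in \ZZ^r(\calX_{b',K})_\Q$ with $\cl_\dR(Z) = \sigma_{\cris,b'}^{-1}(\cl_\cris(Z_\kk))$, whose specialization recovers the class of $Z_\kk$; note that Proposition~\ref{P:arising from algebra} is used to identify formal and algebraic de Rham cohomology on the fiber, so that Conjecture~\ref{C:Emerton} applies directly without any Formal GAGA step.

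With this claim in hand, the remainder of the argument of Section~\ref{S:proof of lemma} transports word for word: $\rho^r(\calX_{b'})$ equals the corank of $\ev_{b'} \circ \gamma^{(r)}$, so $\rho^r(\calX_{b'}) \ge \rho^r(\calX_{\bar{\eta}})$ always, with equality controlled by a finitely generated $\Z$-submodule $\Lambda \subseteq \scriptH(U)^n$. The finite generation of $\Lambda$ comes from the finite-dimensionality of $\cl_\cris(\ZZ^r(\calX_{\bar{s}})_\Q)$ as a $\Q$-vector space, which holds because its dimension equals $\rho^r(\calX_{\bar{s}})$ via the comparison with Betti cohomology recorded in Section~\ref{S:cycle class maps}. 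Proposition~\ref{P:evaluation is injective} then yields a nonempty analytic open subset on which $\rho^r(\calX_{b'}) = \rho^r(\calX_{\bar{\eta}})$, giving the $\OO_\Kbar$-version of the theorem. Finally, the passage from $B(\OO_\Kbar)$ to $B(\OO_C)$ uses Lemma~\ref{L:density of points} together with the higher-codimension version of Corollary~\ref{C:jumping locus is countable union} (flagged in Section~\ref{S:cycle class maps}), exactly as in the proof of Theorem~\ref{T:local} in Section~\ref{S:proof of local and global theorems}.

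The main obstacle, and indeed the only genuinely new ingredient beyond the codimension-one case, is the invocation of Conjecture~\ref{C:Emerton}, which is why the theorem is conditional. Beyond this, one must check that the various auxiliary compatibilities used along the way (pullback and specialization of cycle class maps, the containment of cycle classes in $\Fil^r$, and the finite-dimensionality of the $\Q$-span of the cycle class image) hold in the higher-codimension setting; as sketched in Section~\ref{S:cycle class maps}, these are routine extensions of their codimension-one counterparts, but need to be marshaled carefully at each step.
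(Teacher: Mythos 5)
Your proposal is correct and follows exactly the route the paper intends: the paper's own proof of this theorem consists of the single sentence ``Repeating the $p$-adic proof of Theorem~\ref{T:local} yields the following,'' and you have correctly worked out the substitutions (degree $2r$ for degree $2$, $\Fil^r$ for $\Fil^1$, $\cl_\cris$ for $c_1^\cris$, Conjecture~\ref{C:Emerton} for Theorem~\ref{T:p-adic Lefschetz 1,1 theorem}) and the supporting facts needed (the $\rho^r$ analogues from Section~\ref{S:cycle class maps}, Propositions \ref{P:arising from algebra} and~\ref{P:evaluation is injective}, and Lemma~\ref{L:density of points}). Your observation that the Formal GAGA step drops out because Conjecture~\ref{C:Emerton} directly produces an algebraic cycle on $\calX_{b',K}$ is accurate, and the only small imprecision — calling $\Lambda$ a finitely generated $\Z$-submodule while justifying it via finite-dimensionality over $\Q$ — is harmless, since finite $\Q$-dimension of $\cl_\cris(\ZZ^r(\calX_{\bar{s}})_\Q)$ is what Proposition~\ref{P:evaluation is injective} needs.
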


\subsection{The Hodge-theoretic approach}

To obtain a result using Andr\'e's approach
requires either replacing algebraic cycles with motivated cycles
as in \cite{Andre1996}*{\S5.2},
or else assuming the variational Hodge conjecture, which we now state:

\begin{conjecture}[Variational Hodge conjecture]
\label{variationalhodge}
Let $f \colon \calX\to B$ be a smooth projective morphism
between smooth quasi-projective complex varieties with $B$ irreducible,
and let $r$ be a nonnegative integer.
Let $b\in B(\C)$ and let $\alpha_b\in \cl_{\Betti}(\ZZ^r(\calX_b)_\Q)$.
If $\alpha_b$ is the restriction of a class $\alpha \in \HH^{2r}(\calX^\an,\Q)$
(or equivalently, by Deligne's global invariant cycle theorem, $\alpha_b$
is invariant under the monodromy action of $\pi_1(B^\an,b)$),
then there is a cycle class
$\alpha' \in  \cl_{\Betti}(\ZZ^r(\calX)_\Q)$
such that $\alpha_b$ is the restriction of $\alpha'$.
\end{conjecture}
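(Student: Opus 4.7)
The plan is to establish the implication Conjecture~\ref{C:Emerton} $\Rightarrow$ Conjecture~\ref{variationalhodge} promised in Section~\ref{S:higher codimension}, via spreading out to an arithmetic base followed by a fiberwise application of the $p$-adic variational Hodge conjecture.

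First I would spread out: after replacing $B$ by an affine open, descend $f\colon\calX\to B$, the point $b$, the cycle $Z_b$ on $\calX_b$ witnessing $\cl_{\Betti}(Z_b)=\alpha_b$, and (via \'etale or algebraic de Rham cohomology) the class $\alpha$ itself, to a finitely generated subring $A\subset\C$. Deligne's global invariant cycle theorem ensures $\alpha$ restricts at each fiber to a monodromy-invariant Hodge class of type $(r,r)$. Embed $A\hookrightarrow\calO_K$ into a complete DVR of residue characteristic $p$ satisfying Setup~\ref{A:STAR}, localizing $A$ to preserve smoothness and properness of the base change. The class $\alpha$ then produces a section $\alpha_\dR$ of $\RR^{2r}f_*\Omega^\bullet_{\calX_K/B_K}$ lying pointwise in $\Fil^r$, and via $\sigma_\cris$ (Theorem~\ref{T:dR cris comparison}) a section $\alpha_\cris$ of the convergent isocrystal $\RR^{2r}_\cris f_*\OO_{\calX/K}$ on $B_\kk$. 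The compatibility square of Theorem~\ref{T:dR cris comparison} forces $\alpha_\cris|_{b_\kk}=c_1^\cris(Z_{b,\kk})$, where $b_\kk\in B(\kk)$ is the reduction of $b$.

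The central reduction is as follows: for any $\calO_K$-point $\beta\in B(\calO_K)$, applying Conjecture~\ref{C:Emerton} to $\calX_\beta\to\Spec\calO_K$ produces an algebraic cycle $W_\beta$ on $\calX_{\beta,K}$ with $\cl_\dR(W_\beta)=\alpha_\dR|_{\calX_{\beta,K}}$, provided one first exhibits an algebraic cycle on $\calX_{\beta,\kk}$ whose crystalline class is $\alpha_\cris|_{\calX_{\beta,\kk}}$. To obtain such cycles for $\beta$ reducing into a Zariski open $V\subset B_\kk$ containing $b_\kk$, I would deform $Z_{b,\kk}$ in the relative Hilbert scheme of $\calX_\kk\to B_\kk$, using $\alpha_\cris$ as a rigid target with which the fiberwise crystalline classes of any deformation must agree. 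Once such a relative algebraic cycle on $\calX_\kk|_V$ is in hand, Conjecture~\ref{C:Emerton} lifts fiber-by-fiber to algebraic cycles on the $\calO_K$-generic fibers; these cycles, varying algebraically in $\beta$, descend to a single relative cycle on $\calX_U$ over a Zariski open $U\subset B$, and the Zariski closure in $\calX$ yields $\alpha'\in\ZZ^r(\calX)_\Q$. Compatibility of Betti, de Rham and crystalline cycle class maps then forces $\cl_{\Betti}(\alpha')|_{\calX_b}=\alpha_b$.

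The main obstacle is the deformation step in the central reduction: the equality $\alpha_\cris|_{b_\kk}=c_1^\cris(Z_{b,\kk})$ is given at the single point $b_\kk$, but propagating this identification across $V$ amounts to a flat-family version of the crystalline Tate conjecture, constrained by the flat section $\alpha_\cris$ and its Hodge filtration. A possibly cleaner route would mirror the proof of Lemma~\ref{L:isocrystal output}: establish a higher-codimension analog that expresses the algebraicity locus in a $p$-adic polydisk around $b$ as the common zero locus of finitely many power series arising from the $\Fil^r$-projection in the isocrystal, and then invoke Proposition~\ref{P:evaluation is injective} together with the existence of $b$ (where those power series must vanish since $\alpha_b$ is algebraic) to conclude that they vanish identically, yielding the sought algebraic representative of $\alpha_b$ on $\calX$.
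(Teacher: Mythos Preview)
Your proposal has a genuine gap in the ``central reduction'': you require, for each $\beta\in B(\calO_K)$, an algebraic cycle on $\calX_{\beta,\kk}$ whose crystalline class equals $\alpha_\cris|_{\calX_{\beta,\kk}}$, and you plan to produce this by deforming $Z_{b,\kk}$ along $B_\kk$. That deformation problem is exactly the hard part---in general there is no reason the cycle $Z_{b,\kk}$ deforms to neighboring special fibers---and you have essentially relocated the variational problem from characteristic~$0$ to characteristic~$p$ without gaining anything. The paper avoids this entirely by working not with arbitrary $\beta$, but only with $\beta$ lying in a $p$-adic polydisk $D$ inside the \emph{residue disk} of $b$: every such $\beta$ reduces to the \emph{same} point $s\in B(\kk)$, so the single cycle $Z_s$ serves as the special-fiber input to Conjecture~\ref{C:Emerton} for all of them at once. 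No deformation of cycles is needed.

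Your alternative route also misfires. The argument is not that certain power series vanish at $b$ and hence identically (Proposition~\ref{P:evaluation is injective} gives no such implication). Rather, the crystalline class of $Z_s$ yields, via the convergent isocrystal, a \emph{horizontal} section $\gamma_\dR(Z_s)$ of $E|_D$; the global class $\alpha$ likewise restricts to a horizontal section $\alpha_{\dR,K}|_D$; since both are horizontal and agree at $b$, they agree on all of $D$. Because $\alpha$ is Hodge, $\alpha_{\dR,K}$ already sits in $\Fil^r$, so Conjecture~\ref{C:Emerton} applies directly at every $b'\in D$, making $\alpha_{\dR,K}|_{b'}$ algebraic for all $b'\in D$. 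One then invokes Lemma~\ref{L:algebraic locus is countable union} to write the algebraicity locus as a countable union of closed subschemes covering $D$, and Lemma~\ref{L:countable union} forces one of them to contain the generic point of $B_K$; taking the closure of the resulting generic cycle gives the desired $\alpha'$.
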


\begin{remark} \label{remark26juillet}
The variational Hodge conjecture is a consequence of the Hodge conjecture
because Deligne's global invariant cycles theorem
(\cite{Deligne-HodgeII} or \cite{Voisin2003vol2}*{4.3.3}) 
shows that the class $\alpha_b$ above is the restriction of
a Hodge class $\overline{\alpha}$
on some smooth completion of $\calX$.
On the other hand, it is a priori a much weaker statement,
since the Hodge class $\overline{\alpha}$ is
an ``absolute Hodge class'' in the sense of \cite{Deligne-et-al1982}*{p.~28}.
 \end{remark}

Assuming Conjecture~\ref{variationalhodge},
Andr\'e's argument yields the following
extension of Theorem~\ref{T:complex version}:

\begin{theorem}
Assume that $k$ is an algebraic closure of
a field $k_f$ that is finitely generated over $\Q$.
Let $B$ be an irreducible $k_f$-variety.
Let $\calX \to B$ be a smooth proper morphism.
Let $r$ be a nonnegative integer.
If Conjecture~\ref{variationalhodge} holds for $r$,
then the set of $b\in B(k)$ such that
$\rho^r(\calX_b) > \rho^r(\calX_{\bar{\eta}})$ is sparse.
\end{theorem}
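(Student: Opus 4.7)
The plan is to execute Andr\'e's Hodge-theoretic strategy sketched in Section~\ref{S:complex outline} for Theorem~\ref{T:complex version}, with Conjecture~\ref{variationalhodge} in codimension~$r$ substituting for the Lefschetz $(1,1)$ theorem implicit in the case $r=1$. The four-step skeleton --- reduction; Hodge-theoretic decomposition; Terasoma/Hilbert irreducibility extraction of a sparse exceptional set; cycle-theoretic input --- is preserved, with only the last step changed.

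First I would reduce to the case where $f\colon\calX\to B$ is smooth projective and $B$ is smooth and geometrically irreducible over $k_f$: shrinking $B$ is permitted by Proposition~\ref{P:sparse}\eqref{I:sparse Zariski closed}, and passing from proper to projective uses Chow's lemma and weak factorization as in the remark following Theorem~\ref{T:complex version}. After fixing an embedding $k_f\injects\C$ and a prime $\ell$, set $V:=\HH^{2r}(\calX_b^{\an},\Q)$ and let $\Gamma\subseteq\GL(V)$ be the image of $\pi_1(B_\C^{\an},b)$. Deligne's semisimplicity theorem for smooth projective families yields a canonical decomposition $V=H\directsum H^{\perp}$, where $H$ is the subspace of classes fixed by some finite-index subgroup of $\Gamma$; by the Betti--\'etale comparison this decomposition transports to $V\tensor\Q_\ell=\HH^{2r}_\et(\calX_{\bar{\eta}},\Q_\ell(r))$ and is respected by the $\ell$-adic closure $\overline{\Gamma}$.

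Next I would carry out the Terasoma step. The image $I$ of $\Gal(\overline{k_f(B)}/k_f(B))$ on $V\tensor\Q_\ell$ contains $\overline{\Gamma}$ as a closed normal subgroup with profinite quotient; applying Hilbert irreducibility to the finite quotients of $I/\overline{\Gamma}$ as in \cite{Terasoma1985}, and upgrading thin to sparse via Proposition~\ref{P:sparse}\eqref{I:sparse thin} and \eqref{I:sparse finite union}, produces a sparse set $S_1\subseteq|B|$ outside of which the image of $\Gal(\overline{k_f}/\kappa(b))$ on $\HH^{2r}_\et(\calX_{\bar{b}},\Q_\ell(r))$ contains, under the identification supplied by specialization, a finite-index subgroup of $\overline{\Gamma}$. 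Since algebraic cycle classes on $\calX_{\bar{b}}$ are $\Gal(\overline{k_f}/\kappa(b))$-invariant, for $b\notin S_1$ they all lie in $H\tensor\Q_\ell$.

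Finally, after a finite \'etale base change $B'\to B$ --- which enlarges $S_1$ only sparsely by Proposition~\ref{P:sparse}\eqref{I:sparse image} --- Deligne's global invariant cycle theorem identifies $H$ with the image of the restriction map $\HH^{2r}(\overline{\calX}^{\an},\Q)\to V$ for some smooth projective completion $\overline{\calX}$ of the total space. Every algebraic class on $\calX_{\bar{b}}$ is then the restriction of a global Betti class on $\calX^{\an}$, and Conjecture~\ref{variationalhodge} promotes this to a class in $\cl_{\Betti}(\ZZ^r(\calX)_\Q)$ whose generic fibre specializes back to the original class. Thus for $b$ outside a sparse set the specialization map $\cl_\et(\ZZ^r(\calX_{\bar{\eta}})_\Q)\tensor\Q_\ell\to\cl_\et(\ZZ^r(\calX_{\bar{b}})_\Q)\tensor\Q_\ell$ is surjective; combined with its injectivity recorded in Section~\ref{S:cycle class maps}, this forces $\rho^r(\calX_{\bar{b}})=\rho^r(\calX_{\bar{\eta}})$. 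The hard part will be the Terasoma step: packaging a countable family of thin exceptional sets --- one per relevant finite quotient of $I/\overline{\Gamma}$ --- into a single \emph{sparse} subset of $|B|$ is delicate, and while this bookkeeping is carried out in \cite{Terasoma1985} and \cite{Andre1996}*{\S5.2} for cohomology in degree $2$, one must verify that it survives unchanged in degree $2r$.
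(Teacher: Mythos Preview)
Your proposal is correct and follows precisely the approach the paper indicates: the paper gives no detailed proof of this theorem but simply asserts that ``Andr\'e's argument yields'' it, referring back to the strategy sketched in Section~\ref{S:complex outline} for Theorem~\ref{T:complex version}, and that is exactly what you execute, with Conjecture~\ref{variationalhodge} replacing the Lefschetz $(1,1)$ input. One small slip worth fixing: Proposition~\ref{P:sparse}\eqref{I:sparse thin} goes from sparse to thin, not the reverse, so it cannot be used to ``upgrade thin to sparse''; but this is harmless, because the Hilbert-irreducibility step in \cite{Terasoma1985} and \cite{Andre1996}*{\S5.2} already outputs sparse sets directly --- the exceptional locus is by construction the set of closed points over which a fixed generically finite cover has reducible fiber, which is sparse by Definition~\ref{D:sparse} --- so no upgrading is needed and your worry about packaging countably many thin sets does not actually arise.
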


\begin{remark}
In fact, Andr\'e works with motivic Galois groups instead
of spaces of cycle classes, and hence proves more, 
namely that one can consider 
not only $\calX \to B$ but also all its fibered powers simultaneously.
\end{remark}

\begin{remark}

\end{remark}

\subsection{An implication}
\label{S:implication}

We introduced above two different variational forms of the Hodge conjecture,
in order to extend the results on N\'{e}ron-Severi groups
to higher-degree classes.
We now show that the $p$-adic version is stronger.
\begin{theorem}
\label{T:implication}
The $p$-adic variational Hodge conjecture
(Conjecture~\ref{C:Emerton})
implies the variational Hodge conjecture
(Conjecture~\ref{variationalhodge}).
\end{theorem}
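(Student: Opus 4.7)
The plan is to deduce from Conjecture~\ref{C:Emerton} that, after spreading out and reducing mod a suitable prime~$p$, the flat global de Rham section corresponding to $\alpha$ is pointwise algebraic on a full $p$-adic polydisk of fibers, and then to globalize via the relative Hilbert scheme. First, I would spread out: choose a finitely generated $\Z$-subalgebra $R \subset \C$ so that $\calX \to B$ descends to a smooth projective morphism $\calX_R \to B_R$, the point $b$ extends to some $b_R \in B_R(R)$, and the cycle $Z_b$ realizing $\alpha_b$ descends to a cycle on $\calX_{b_R}$. The hypothesis that $\alpha_b$ is the restriction of a class $\alpha \in \HH^{2r}(\calX^\an, \Q)$ makes $\alpha_b$ monodromy-invariant in $R^{2r}f_*\Q$; by Deligne's theorem of the fixed part \cite{Deligne-HodgeII}, the invariant sub-local-system underlies a constant sub-VHS, so (after possibly further localizing $R$) $\alpha$ yields a flat global section $\tilde\alpha$ of the algebraic Gauss--Manin bundle $\RR^{2r}f_*\Omega^\bullet_{\calX_R/B_R}$ whose value lies in $\Fil^r$ at every fiber, the Hodge filtration on the constant sub-VHS being constant.

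Next I would fix a prime $p$ and an embedding $R \hookrightarrow \calO_K$ into a $p$-adic DVR, so $b_R$ becomes a point $b \in B(\calO_K)$ with reduction $\bar b \in B(\kk)$, and $Z_b$ spreads to a cycle on $\calX_{b,\calO}$ whose reduction is $\bar Z_b \in \ZZ^r(\calX_{\bar b})_\Q$. Theorem~\ref{T:dR cris comparison} applied to the enlargement $\widehat{B_\calO}$ of $B_\kk$ identifies the formal de Rham bundle with the convergent isocrystal $\RR^{2r}_\cris f_* \OO_{\calX/K}$, so $\tilde\alpha$ becomes a global section of the latter. For any lift $b' \in B(\calO_{\Kbar})$ of $\bar b$, the cocycle condition in Definition~\ref{D:convergent isocrystal} applied to the two morphisms of enlargements $b, b' \colon \Spf \calO_{\Kbar} \to \widehat{B_\calO}$ (both factoring through $\lceil \bar b \rceil$) forces
\[
	\sigma_{\cris,b'}(\tilde\alpha|_{b'}) \;=\; \sigma_{\cris,b}(\tilde\alpha|_b) \;=\; \cl_\cris(\bar Z_b) \;\in\; \HH^{2r}_\cris(\calX_{\bar b}/K),
\]
so $\sigma_\cris^{-1}(\cl_\cris(\bar Z_b)) = \tilde\alpha|_{b'}$ lies in $\Fil^r \HH^{2r}_\dR(\calX_{b',K}/K)$. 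Conjecture~\ref{C:Emerton} applied to the smooth proper $\calO_K$-scheme $\calX_{b'}$ and the cycle $\bar Z_b$ on its special fiber then produces $Z_{b'} \in \ZZ^r(\calX_{b',K})_\Q$ with $\cl_\dR(Z_{b'}) = \tilde\alpha|_{b'}$.

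Thus the locus $S := \{\, b' \in B(\C_p) : \tilde\alpha|_{b'} \in \cl_\dR(\ZZ^r(\calX_{b'})_\Q) \,\}$ contains the full $p$-adic polydisk of lifts of $\bar b$. To globalize I would write $S$ as the union, over those components of the relative Hilbert scheme $\Hilb(\calX_R/B_R)$ whose universal family has the right cycle class, of the images in $B$; this gives $S$ as a countable union of constructible subsets of $B$. By Lemma~\ref{L:countable union} no countable union of proper subvarieties of $B$ can contain a $p$-adic polydisk, so some such component dominates $B$, and the corresponding universal family of fiberwise cycles extends by Zariski closure to a cycle $Z \in \ZZ^r(\calX)_\Q$ whose fiberwise Betti class defines a flat section of $R^{2r}f_*\Q$ agreeing with $\tilde\alpha$ on a Zariski dense open $U \subset B$, hence everywhere; in particular $\cl_\Betti(Z)|_b = \alpha_b$, proving the variational Hodge conjecture for this datum.

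The main obstacle I foresee is this final globalization: one must carefully parameterize the countably many Hilbert-scheme components, verify that the Zariski closure of a fiberwise family yields a cycle whose class recovers the full flat section $\tilde\alpha$ rather than merely its restriction to $U$, and invoke a $p$-adic density statement of the flavor of Lemma~\ref{L:countable union} for $B(\C_p)$. The isocrystal compatibility in paragraph~2 is essentially a cocycle calculation that flows from Theorems~\ref{T:crystalline isocrystal} and~\ref{T:dR cris comparison} once one tracks the identifications.
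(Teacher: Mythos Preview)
Your overall strategy matches the paper's: spread out, embed into a $p$-adic setting, use the crystalline/de Rham comparison together with Conjecture~\ref{C:Emerton} to obtain fiberwise algebraicity of $\tilde\alpha$ on a full $p$-adic residue disk, then globalize via relative Hilbert schemes and Lemma~\ref{L:countable union}. The paper packages the Hilbert-scheme step as a separate Lemma~\ref{L:algebraic locus is countable union} (obtaining \emph{closed} subschemes, not just constructible ones), but otherwise the architecture is the same.

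There is, however, a genuine gap in your paragraph~2. The cocycle condition of Definition~\ref{D:convergent isocrystal} does \emph{not} by itself force $\sigma_{\cris,b'}(\tilde\alpha|_{b'}) = \sigma_{\cris,b}(\tilde\alpha|_b)$. The two lifts $b,b'$ of $\bar b$ determine the same enlargement $(\Spf\calO_{\Kbar},\bar b)$ of $B_\kk$, but they give \emph{different} morphisms of enlargements from it to $T=\widehat{B_\calO}$; the cocycle condition controls pullback along a single such morphism, not the comparison between two of them. Concretely, $\sigma_{\cris,T}(\tilde\alpha)$ is merely a section of $E_T$, and an arbitrary section of $E_T$ will pull back to different elements of $E_{\lceil\bar b\rceil}\otimes\Kbar$ under $b$ and $b'$. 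What singles out $\tilde\alpha$ is that it is \emph{horizontal} for the Gauss--Manin connection. The paper makes essential use of this: it constructs (as in Section~\ref{S:proof of lemma}) the horizontal section $\gamma_\dR(Z_s)$ of the de Rham bundle on a polydisk $D$ corresponding to the constant crystalline class $\cl_\cris(Z_s)$, and then argues that $\gamma_\dR(Z_s)$ and $\alpha_{\dR,K}|_D$ are both horizontal with the same value at $b$, hence equal on $D$ by uniqueness of horizontal extensions. Only then does one know that $\sigma_\cris^{-1}(\cl_\cris(Z_s))=\tilde\alpha|_{b'}$ for every $b'\in D$. So the step you flag as ``essentially a cocycle calculation'' is in fact where horizontality does real work, and your argument as written does not go through without it.
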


The proof will use the following statement.

\begin{lemma}
\label{L:algebraic locus is countable union}
Let $K$ be any field of characteristic~$0$.
Let $B$ be a $K$-variety.
Let $f \colon \calX \to B$ be a smooth projective morphism.
Let $\alpha$ be a relative de Rham class, i.e.,
an element of $\HH^0(B,\R^{2r} f_* \Omega^\bullet_{\calX/B})$.
Then there exists a countable set of closed subschemes $V_i$ of $B$
such that for any field extension $L \supseteq K$
and any $b \in B(L)$, the restriction $\alpha_b \in \HH^{2r}_\dR(\calX_b/L)$
is algebraic (i.e., in $\cl_{\dR}(\ZZ^r(\calX_b)_\Q)$)
if and only if $b \in \Union_i V_i(L)$.
\end{lemma}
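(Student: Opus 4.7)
The plan is to parametrize the ways in which $\alpha_b$ can be expressed as a rational combination of cycle classes, using relative Hilbert schemes over $B$, after first reducing to the case of algebraically closed $L$. Since $\Char K = 0$, a Galois-averaging argument shows that $\alpha_b \in \HH^{2r}_\dR(\calX_b/L)$ is algebraic over $L$ if and only if $\alpha_{b_{\overline{L}}} \in \HH^{2r}_\dR(\calX_{b_{\overline{L}}}/\overline{L})$ is algebraic over $\overline{L}$: starting from an expression $\alpha_{b_{\overline{L}}} = \sum_i q_i [Z_i]$ with the $Z_i$ defined over a finite Galois extension $L'/L$, averaging over $\Gal(L'/L)$ produces a Galois-invariant rational cycle that descends to $L$ and represents $\alpha_b$. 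Since each $V_i$ will be a closed subscheme of $B$, we have $b \in V_i(L) \Leftrightarrow b_{\overline{L}} \in V_i(\overline{L})$, so it suffices to match the algebraic locus with $\Union V_i$ on $\overline{L}$-points for every algebraically closed $\overline{L} \supseteq K$.

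For each Hilbert polynomial $P$, let $H_P := \Hilb^P(\calX/B)$; since $\calX \to B$ is smooth projective, $H_P \to B$ is projective. Let $H_P^\circ \subseteq H_P$ be the union of connected components on which the universal subscheme has codimension $r$ in the fibers of $\calX \to B$. On $H_P^\circ$ there is a universal codimension-$r$ subscheme $\calZ_P \subseteq \calX \times_B H_P^\circ$, whose de Rham cycle class gives a global section $[\calZ_P]$ of $\R^{2r} \tilde{f}_* \Omega^\bullet$, where $\tilde{f} \colon \calX \times_B H_P^\circ \to H_P^\circ$ is the pullback of $f$. For each finite tuple $(\vec{P}, \vec{q}) = ((P_1,\ldots,P_n),(q_1,\ldots,q_n))$ with each $P_i$ of the above type and $q_i \in \Q$, form the projective $B$-scheme $H_{\vec{P}} := H_{P_1}^\circ \times_B \cdots \times_B H_{P_n}^\circ$, and let $W_{\vec{P},\vec{q}} \subseteq H_{\vec{P}}$ be the vanishing locus of the section $\alpha - \sum_i q_i [\calZ_{P_i}]$ of the locally free sheaf $\R^{2r}\tilde{f}_*\Omega^\bullet$ (local freeness via Proposition~\ref{P:pullback of de Rham}\eqref{I:Deligne locally free} and pullback to $H_{\vec{P}}$). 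Let $V_{\vec{P},\vec{q}}$ be the image of $W_{\vec{P},\vec{q}} \to B$ with its reduced induced structure; this is a closed subscheme of $B$ because $H_{\vec{P}} \to B$ is projective, and the indexing set of $(\vec{P},\vec{q})$ is countable.

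To verify the equivalence over algebraically closed $L$ and $b \in B(L)$: if $\alpha_b = \sum_i q_i [Z_i]$ with each $Z_i \subseteq \calX_b$ integral of codimension $r$ and Hilbert polynomial $P_i$, then the tuple $(Z_1,\ldots,Z_n)$ defines an $L$-point of $H_{\vec{P}}$ above $b$ at which the section cutting out $W_{\vec{P},\vec{q}}$ vanishes, by compatibility of the relative cycle class with restriction to fibers; hence $b \in V_{\vec{P},\vec{q}}(L)$. Conversely, if $b \in V_{\vec{P},\vec{q}}(L)$ then $b$ lies in the set-theoretic image of $W_{\vec{P},\vec{q}} \to B$, so the fiber $(W_{\vec{P},\vec{q}})_b$ is a nonempty projective $L$-scheme; by the Nullstellensatz it has an $L$-point, which produces codimension-$r$ subschemes $Z_i \subseteq \calX_b$ with $\alpha_b = \sum_i q_i [Z_i]$ in $\HH^{2r}_\dR(\calX_b/L)$, exhibiting $\alpha_b$ as algebraic.

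The main subtlety will be the gap between $L$-points of the scheme-theoretic image of $W_{\vec{P},\vec{q}} \to B$ and the images in $B$ of $L$-points of $W_{\vec{P},\vec{q}}$ itself: these coincide over algebraically closed $L$ via the Nullstellensatz, but can differ otherwise, so the reverse implication genuinely requires the algebraic closure. This is precisely what the initial Galois-averaging reduction to algebraically closed $L$ is designed to bypass; once that reduction is in place, the remaining Hilbert-scheme bookkeeping is routine.
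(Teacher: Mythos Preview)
Your argument is correct and follows the same Hilbert-scheme strategy as the paper: parametrize effective codimension-$r$ cycles by the countably many relative Hilbert schemes, use the relative de Rham cycle class, and obtain the $V_i$ by matching $\Q$-linear combinations against $\alpha$. The paper packages this slightly differently---it views $E=\R^{2r}f_*\Omega^\bullet_{\calX/B}$ as a geometric vector bundle, takes the images $I_j\subseteq \Fil^r E$ of the cycle-class morphisms $H_j\to \Fil^r E$, forms $\Q$-linear combinations of the $I_j$ to get closed subvarieties $W_i\subseteq \Fil^r E$, and defines $V_i$ as the preimage of $W_i$ under the section $\alpha\colon B\to \Fil^r E$---but this is equivalent to your vanishing-locus-then-project construction.

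One point where you are more careful than the paper: the paper's sketch only says that points of $I_j$ over an \emph{algebraically closed} field parametrize effective cycle classes, and does not spell out why the equivalence holds for arbitrary $L$. Your Galois-averaging reduction fills exactly this gap, and it is needed, since the lemma is later applied in the proof of Theorem~\ref{T:implication} with $L=\kappa(\eta)$ not algebraically closed. So your version is a genuine clarification of the paper's argument rather than a different route.
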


\begin{proof}
This is a standard consequence of the fact that
the codimension-$r$ subschemes in the fibers of $f$
are parametrized by countably many relative Hilbert schemes $H_j$,
each of which is projective over $B$.
Let $E$ be $\R^{2r} f_* \Omega^\bullet_{\calX/B}$ viewed as a geometric
vector bundle on $B$.
The relative cycle class map is a $B$-morphism $H_j \to \Fil^r E$;
its image $I_j$ is projective over $B$
and points on $I_j$ over an algebraically closed field
parametrize the classes of effective cycles given by $H_j$.
Taking $\Q$-linear combinations of these classes yields
countably many closed subvarieties $W_i$ of $\Fil^r E$
that together parametrize all de Rham cycle classes in fibers of $f$.
The desired subschemes $V_i$ are the inverse images of the $W_i$
under the section $B \to \Fil^r E$ given by $\alpha$.
\end{proof}

\begin{proof}[Proof of Theorem~\ref{T:implication}]
Let $\calX$, $B$, $f$, $b$, $\alpha_b$, and $\alpha$ be as in
Conjecture~\ref{variationalhodge}.
Let $Z \in \ZZ^r(\calX_b)_\Q$ be such that $\cl_{\Betti}(Z)=\alpha_b$.
Let $\overline{\calX}$ be a smooth projective completion of $\calX$.
As in Remark~\ref{remark26juillet}, we note that
$\alpha_b = \bar{\alpha}|_{\calX_b}$,
where $\bar{\alpha}\in \HH^{2r}(\overline{\calX}^\an,\Q)$ is a Hodge class.
In the Leray spectral sequence, $\alpha$ maps to a relative Betti class in
$\HH^0(B^\an,\RR^{2r} f_* \C)$,
and the relative Grothendieck comparison isomorphism identifies this with a relative de Rham class
$\alpha_{\dR} \in \HH^0(B,E)$,
where $E$ is the sheaf $\R^{2r} f_* \Omega^\bullet_{\calX/B}$
equipped with the Hodge filtration $\Fil^\bullet$ and Gauss-Manin connection.
Since $\bar{\alpha}$ is Hodge 
and restricts to the same section of $E$ as $\alpha$,
$\alpha_{\dR}$ is a horizontal section lying in $\HH^0(B,\Fil^r E)$.

There is a finitely generated subring $A \subset \C$
such that $\calX$, $B$, $f$, $b$, $Z$, $E$, and $\alpha_{\dR}$
are obtained via base change from corresponding objects over $A$,
which we next base extend by an embedding $A \injects \Z_p$
provided by \cite{CasselsLocalFields1986}*{Chapter~5,~Theorem~1.1}.
To ease notation, from now on we use the notation of Setup~\ref{A:STAR}
with $\calO_K:=\Z_p$,
and reuse the symbols above to denote the corresponding objects over $\calO_K$.
Let $s \in B(k)$ be the reduction of $b \in B(\calO_K)$.
We may assume that the constituents of $Z$ are flat over $\calO_K$,
so that the special fiber $Z_s$ is the specialization of $Z_K$.

As in the proof of Lemma~\ref{L:isocrystal output},
we use $\cl_{\cris}(Z_s) \in \HH^{2r}_{\cris}(\calX_s/K)$
to construct a horizontal section $\gamma_{\dR}(Z_s)$ of $E|_D$
for some closed polydisk neighborhood $D$ of $b$ inside the residue disk
in $B(\calO_K)$ corresponding to $s$.
(This time we need only the $\calO_K$-points,
not the $\calO_\Kbar$-points.)
Since $\gamma_{\dR}(Z_s)$ and $\alpha_{\dR,K}|_D$ are represented
by convergent power series on $D$,
it suffices to check that they are in equal in the formal completion
of $E$ at $b$.
The corresponding formal sections are horizontal 
and take the same value at $b$ (namely, $\cl_{\dR}(Z_K)$),
so they are equal.
In particular, for any $b' \in D \subset B(\calO_K)$,
the value of $\gamma_{\dR}(Z_s)$ at the generic point $b'_K$ of $b'$
is in $\Fil^r E_{b'_K}$.

So Conjecture~\ref{C:Emerton} applied to the $\calO_K$-morphism
$\calX_{b'} \to B_{b'}$
implies that 
there exists $Z' \in \ZZ^r(\calX_{b'_K})_\Q$
such that
\[
	\sigma_{\cris}(\cl_{\dR}(Z')) = \cl_{\cris}(Z_s).
\]
The isomorphism $\sigma_\cris$ also maps 
$\alpha_{\dR,K}|_{b'_K} = \gamma_{\dR}(Z_s)|_{b'_K}$
to $\cl_\cris(Z_s) \in \HH^2_\cris(\calX_s/K)$,
so
\[
	\cl_{\dR}(Z') = \alpha_{\dR,K}|_{b'_K}.
\]
So the value of $\alpha_{\dR,K}$ on any $K$-point in $D$ is an algebraic class.

Applying Lemma~\ref{L:algebraic locus is countable union} to
$\calX_K \to B_K$ and $\alpha_{\dR,K}$
yields a countable set of closed subschemes $V_i$ of $B_K$.
The previous paragraph shows that $D \subseteq \Union V_i(K)$.
By Lemma~\ref{L:countable union}, we have $\dim V_i = \dim B_K$ for some $i$.
This $V_i$ contains the generic point $\eta$ of $B_K$.
So $\alpha_{\dR,K}|_\eta=\cl_\dR(Y_\eta)$
for some $Y_\eta \in \ZZ^r(\calX_\eta)_\Q$.
Taking the closure in $\calX_K$ of the constituents of $Y_\eta$
defines some $Y_K \in \ZZ^r(\calX_K)_\Q$.
To $Y_K$ we associate a relative de Rham class
$\alpha'_{\dR,K} \in \HH^0(B_K,\R^{2r} f_* \Omega^\bullet_{\calX_K/B_K})$
and a Betti class $\alpha':=\cl_{\Betti}((Y_K)^\an) \in \HH^{2r}(\calX^\an,\Q)$,
where the complex analytic spaces are obtained by fixing
an $A$-algebra homomorphism $K \injects \C$.
The set $\{ q \in B_K : \alpha'_{\dR,K}|_q = \alpha_{\dR,K}|_q \}$
is closed and contains $\eta$, so it contains $b$,
which we now view as a $K$-point.
Under the comparison isomorphisms,
the equal de Rham classes $\alpha'_{\dR,K}|_b$ and $\alpha_{\dR,K}|_b$
correspond to the Betti classes $\alpha'_b$ and $\alpha_b$,
so $\alpha'_b = \alpha_b$.
\end{proof}

\section*{Acknowledgements}

We thank Claire Voisin for explaining the complex-analytic approach
of Section~\ref{S:complex outline} to us, 
and for many discussions regarding other parts of the article.
We also thank
Ekaterina Amerik,
Yves Andr\'e,
Pete Clark,
Brian Conrad,
Daniel Bertrand,
Jordan Ellenberg,
Max Lieblich,
Ben Moonen,
Arthur Ogus, and
Richard Thomas,
for useful discussions and comments.

\begin{bibdiv}
\begin{biblist}


\bib{Abramovich-et-al2002}{article}{
  author={Abramovich, Dan},
  author={Karu, Kalle},
  author={Matsuki, Kenji},
  author={W{\l }odarczyk, Jaros{\l }aw},
  title={Torification and factorization of birational maps},
  journal={J. Amer. Math. Soc.},
  volume={15},
  date={2002},
  number={3},
  pages={531--572 (electronic)},
  issn={0894-0347},
  review={\MR {1896232 (2003c:14016)}},
}

\bib{Andre1996}{article}{
  author={Andr{\'e}, Yves},
  title={Pour une th\'eorie inconditionnelle des motifs},
  language={French},
  journal={Inst. Hautes \'Etudes Sci. Publ. Math.},
  number={83},
  date={1996},
  pages={5--49},
  issn={0073-8301},
  review={\MR {1423019 (98m:14022)}},
}

\bib{Berthelot1974}{book}{
  author={Berthelot, Pierre},
  title={Cohomologie cristalline des sch\'emas de caract\'eristique $p>0$},
  language={French},
  series={Lecture Notes in Mathematics, Vol. 407},
  publisher={Springer-Verlag},
  place={Berlin},
  date={1974},
  pages={604},
  review={\MR {0384804 (52 \#5676)}},
}

\bib{Berthelot-Illusie1970}{article}{
  author={Berthelot, Pierre},
  author={Illusie, Luc},
  title={Classes de Chern en cohomologie cristalline},
  language={French},
  journal={C. R. Acad. Sci. Paris S\'er. A-B 270 (1970), A1695-A1697; ibid.},
  volume={270},
  date={1970},
  pages={A1750--A1752},
  review={\MR {0269660 (42 \#4555)}},
}

\bib{Berthelot-Ogus1983}{article}{
  author={Berthelot, P.},
  author={Ogus, A.},
  title={$F$-isocrystals and de Rham cohomology. I},
  journal={Invent. Math.},
  volume={72},
  date={1983},
  number={2},
  pages={159--199},
  issn={0020-9910},
  review={\MR {700767 (85e:14025)}},
}

\bib{Bombieri-Mumford1976}{article}{
  author={Bombieri, E.},
  author={Mumford, D.},
  title={Enriques' classification of surfaces in char. $p$. III},
  journal={Invent. Math.},
  volume={35},
  date={1976},
  pages={197--232},
  issn={0020-9910},
  review={\MR {0491720 (58 \#10922b)}},
}

\bib{Bosch-Lutkebohmert-Raynaud1990}{book}{
  author={Bosch, Siegfried},
  author={L{\"u}tkebohmert, Werner},
  author={Raynaud, Michel},
  title={N\'eron models},
  series={Ergebnisse der Mathematik und ihrer Grenzgebiete (3) [Results in Mathematics and Related Areas (3)]},
  volume={21},
  publisher={Springer-Verlag},
  place={Berlin},
  date={1990},
  pages={x+325},
  isbn={3-540-50587-3},
  review={\MR {1045822 (91i:14034)}},
}

\bib{BourbakiCommutativeAlgebra}{book}{
  author={Bourbaki, Nicolas},
  title={Commutative algebra. Chapters 1--7},
  series={Elements of Mathematics (Berlin)},
  note={Translated from the French; Reprint of the 1989 English translation},
  publisher={Springer-Verlag},
  place={Berlin},
  date={1998},
  pages={xxiv+625},
  isbn={3-540-64239-0},
  review={\MR {1727221 (2001g:13001)}},
}

\bib{CasselsLocalFields1986}{book}{
  author={Cassels, J. W. S.},
  title={Local fields},
  series={London Mathematical Society Student Texts},
  volume={3},
  publisher={Cambridge University Press},
  place={Cambridge},
  date={1986},
  pages={xiv+360},
  isbn={0-521-30484-9},
  isbn={0-521-31525-5},
  review={\MR {861410 (87i:11172)}},
}

\bib{Debarre2001}{book}{
  author={Debarre, Olivier},
  title={Higher-dimensional algebraic geometry},
  series={Universitext},
  publisher={Springer-Verlag},
  place={New York},
  date={2001},
  pages={xiv+233},
  isbn={0-387-95227-6},
  review={\MR {1841091 (2002g:14001)}},
}

\bib{Deligne1968}{article}{
  author={Deligne, P.},
  title={Th\'eor\`eme de Lefschetz et crit\`eres de d\'eg\'en\'erescence de suites spectrales},
  language={French},
  journal={Inst. Hautes \'Etudes Sci. Publ. Math.},
  number={35},
  date={1968},
  pages={259--278},
  issn={0073-8301},
  review={\MR {0244265 (39 \#5582)}},
}

\bib{Deligne-HodgeII}{article}{
  author={Deligne, Pierre},
  title={Th\'eorie de Hodge. II},
  language={French},
  journal={Inst. Hautes \'Etudes Sci. Publ. Math.},
  number={40},
  date={1971},
  pages={5--57},
  issn={0073-8301},
  review={\MR {0498551 (58 \#16653a)}},
}

\bib{Deligne-et-al1982}{book}{
  author={Deligne, Pierre},
  author={Milne, James S.},
  author={Ogus, Arthur},
  author={Shih, Kuang-yen},
  title={Hodge cycles, motives, and Shimura varieties},
  series={Lecture Notes in Mathematics},
  volume={900},
  publisher={Springer-Verlag},
  place={Berlin},
  date={1982},
  pages={ii+414},
  isbn={3-540-11174-3},
  review={\MR {654325 (84m:14046)}},
}

\bib{EGA-I}{article}{
  author={Grothendieck, A.},
  title={\'El\'ements de g\'eom\'etrie alg\'ebrique. I. Le langage des sch\'emas},
  journal={Inst. Hautes \'Etudes Sci. Publ. Math.},
  number={4},
  date={1960},
  pages={228},
  issn={0073-8301},
  review={\MR {0217083 (36 \#177a)}},
  label={EGA I},
}

\bib{EGA-II}{article}{
  author={Grothendieck, A.},
  title={\'El\'ements de g\'eom\'etrie alg\'ebrique. II. \'Etude globale \'el\'ementaire de quelques classes de morphismes},
  journal={Inst. Hautes \'Etudes Sci. Publ. Math.},
  number={8},
  date={1961},
  pages={222},
  issn={0073-8301},
  review={\MR {0217084 (36 \#177b)}},
  label={EGA II},
}

\bib{EGA-III.I}{article}{
  author={Grothendieck, A.},
  title={\'El\'ements de g\'eom\'etrie alg\'ebrique. III. \'Etude cohomologique des faisceaux coh\'erents. I},
  journal={Inst. Hautes \'Etudes Sci. Publ. Math.},
  number={11},
  date={1961},
  pages={167},
  issn={0073-8301},
  review={\MR {0217085 (36 \#177c)}},
  label={EGA III.I},
}

\bib{EGA-IV.III}{article}{
  author={Grothendieck, A.},
  title={\'El\'ements de g\'eom\'etrie alg\'ebrique. IV. \'Etude locale des sch\'emas et des morphismes de sch\'emas. III},
  journal={Inst. Hautes \'Etudes Sci. Publ. Math.},
  number={28},
  date={1966},
  pages={255},
  issn={0073-8301},
  review={\MR {0217086 (36 \#178)}},
  label={EGA IV.III},
}

\bib{EGA-IV.IV}{article}{
  author={Grothendieck, A.},
  title={\'El\'ements de g\'eom\'etrie alg\'ebrique. IV. \'Etude locale des sch\'emas et des morphismes de sch\'emas IV},
  language={French},
  journal={Inst. Hautes \'Etudes Sci. Publ. Math.},
  number={32},
  date={1967},
  pages={361},
  issn={0073-8301},
  review={\MR {0238860 (39 \#220)}},
  label={EGA IV.IV},
}

\bib{Emerton-preprint}{misc}{
  author={Emerton, Matthew},
  title={A $p$-adic variational Hodge conjecture and modular forms with complex multiplication},
  note={Preprint, available at \url {http://www.math.northwestern.edu/~emerton/pdffiles/cm.pdf}},
}

\bib{Faltings-Chai1990}{book}{
  author={Faltings, Gerd},
  author={Chai, Ching-Li},
  title={Degeneration of abelian varieties},
  series={Ergebnisse der Mathematik und ihrer Grenzgebiete (3) [Results in Mathematics and Related Areas (3)]},
  volume={22},
  note={With an appendix by David Mumford},
  publisher={Springer-Verlag},
  place={Berlin},
  date={1990},
  pages={xii+316},
  isbn={3-540-52015-5},
  review={\MR {1083353 (92d:14036)}},
}

\bib{Gillet-Messing1987}{article}{
  author={Gillet, Henri},
  author={Messing, William},
  title={Cycle classes and Riemann-Roch for crystalline cohomology},
  journal={Duke Math. J.},
  volume={55},
  date={1987},
  number={3},
  pages={501--538},
  issn={0012-7094},
  review={\MR {904940 (89c:14025)}},
}

\bib{Green-Griffiths-Paranjape2004}{article}{
  author={Green, Mark},
  author={Griffiths, Philip A.},
  author={Paranjape, Kapil H.},
  title={Cycles over fields of transcendence degree 1},
  journal={Michigan Math. J.},
  volume={52},
  date={2004},
  number={1},
  pages={181--187},
  issn={0026-2285},
  review={\MR {2043404 (2005f:14019)}},
}

\bib{Grothendieck1966-DR}{article}{
  author={Grothendieck, A.},
  title={On the de Rham cohomology of algebraic varieties},
  journal={Inst. Hautes \'Etudes Sci. Publ. Math.},
  number={29},
  date={1966},
  pages={95--103},
  issn={0073-8301},
  review={\MR {0199194 (33 \#7343)}},
}

\bib{Grothendieck-crystals}{article}{
  author={Grothendieck, A.},
  title={Crystals and the de Rham cohomology of schemes},
  conference={ title={Dix Expos\'es sur la Cohomologie des Sch\'emas}, },
  book={ publisher={North-Holland}, place={Amsterdam}, },
  date={1968},
  pages={306--358},
  review={\MR {0269663 (42 \#4558)}},
}

\bib{Hartshorne1975}{article}{
  author={Hartshorne, Robin},
  title={On the De Rham cohomology of algebraic varieties},
  journal={Inst. Hautes \'Etudes Sci. Publ. Math.},
  number={45},
  date={1975},
  pages={5--99},
  issn={0073-8301},
  review={\MR {0432647 (55 \#5633)}},
}

\bib{Igusa2000}{book}{
  author={Igusa, Jun-ichi},
  title={An introduction to the theory of local zeta functions},
  series={AMS/IP Studies in Advanced Mathematics},
  volume={14},
  publisher={American Mathematical Society},
  place={Providence, RI},
  date={2000},
  pages={xii+232},
  isbn={0-8218-2015-X},
  review={\MR {1743467 (2001j:11112)}},
}

\bib{Kleiman1968}{article}{
  author={Kleiman, S. L.},
  title={Algebraic cycles and the Weil conjectures},
  conference={ title={Dix espos\'es sur la cohomologie des sch\'emas}, },
  book={ publisher={North-Holland}, place={Amsterdam}, },
  date={1968},
  pages={359--386},
  review={\MR {0292838 (45 \#1920)}},
}

\bib{Kleiman2005}{article}{
  author={Kleiman, Steven L.},
  title={The Picard scheme},
  conference={ title={Fundamental algebraic geometry}, },
  book={ series={Math. Surveys Monogr.}, volume={123}, publisher={Amer. Math. Soc.}, place={Providence, RI}, },
  date={2005},
  pages={235--321},
  review={\MR {2223410}},
}

\bib{Kurschak1913}{article}{
  author={K\"ursch\'ak, Josef},
  title={\"Uber Limesbildung und allgemeine K\"orpertheorie},
  journal={J.\ reine angew.\ Math.},
  volume={142},
  date={1913},
  pages={211--253},
}

\bib{Lampert1986}{article}{
  author={Lampert, David},
  title={Algebraic $p$-adic expansions},
  journal={J. Number Theory},
  volume={23},
  date={1986},
  number={3},
  pages={279--284},
  issn={0022-314X},
  review={\MR {846958 (87j:11130)}},
}

\bib{Lang1983}{article}{
  author={Lang, William E.},
  title={On Enriques surfaces in characteristic $p$. I},
  journal={Math. Ann.},
  volume={265},
  date={1983},
  number={1},
  pages={45--65},
  issn={0025-5831},
  review={\MR {719350 (86c:14031)}},
}

\bib{Lipman1978}{article}{
  author={Lipman, Joseph},
  title={Desingularization of two-dimensional schemes},
  journal={Ann. Math. (2)},
  volume={107},
  date={1978},
  number={1},
  pages={151--207},
  review={\MR {0491722 (58 \#10924)}},
}

\bib{Masser1996}{article}{
  author={Masser, D. W.},
  title={Specializations of endomorphism rings of abelian varieties},
  language={English, with English and French summaries},
  journal={Bull. Soc. Math. France},
  volume={124},
  date={1996},
  number={3},
  pages={457--476},
  issn={0037-9484},
  review={\MR {1415735 (97k:11093)}},
}

\bib{MilneEtaleCohomology1980}{book}{
  author={Milne, J. S.},
  title={\'Etale cohomology},
  series={Princeton Mathematical Series},
  volume={33},
  publisher={Princeton University Press},
  place={Princeton, N.J.},
  date={1980},
  pages={xiii+323},
  isbn={0-691-08238-3},
  review={\MR {559531 (81j:14002)}},
}

\bib{MumfordAV1970}{book}{
  author={Mumford, David},
  title={Abelian varieties},
  series={Tata Institute of Fundamental Research Studies in Mathematics, No. 5 },
  publisher={Published for the Tata Institute of Fundamental Research, Bombay},
  date={1970},
  pages={viii+242},
  review={\MR {0282985 (44 \#219)}},
}

\bib{Murre1964}{article}{
  author={Murre, J. P.},
  title={On contravariant functors from the category of pre-schemes over a field into the category of abelian groups (with an application to the Picard functor)},
  journal={Inst. Hautes \'Etudes Sci. Publ. Math.},
  number={23},
  date={1964},
  pages={5--43},
  issn={0073-8301},
  review={\MR {0206011 (34 \#5836)}},
}

\bib{Neron1952}{article}{
  author={N{\'e}ron, Andr{\'e}},
  title={Probl\`emes arithm\'etiques et g\'eom\'etriques rattach\'es \`a la notion de rang d'une courbe alg\'ebrique dans un corps},
  language={French},
  journal={Bull. Soc. Math. France},
  volume={80},
  date={1952},
  pages={101--166},
  issn={0037-9484},
  review={\MR {0056951 (15,151a)}},
}

\bib{Noot1995}{article}{
  author={Noot, Rutger},
  title={Abelian varieties---Galois representation and properties of ordinary reduction},
  note={Special issue in honour of Frans Oort},
  journal={Compositio Math.},
  volume={97},
  date={1995},
  number={1-2},
  pages={161--171},
  issn={0010-437X},
  review={\MR {1355123 (97a:11093)}},
}

\bib{Ogus1984}{article}{
  author={Ogus, Arthur},
  title={$F$-isocrystals and de Rham cohomology. II. Convergent isocrystals},
  journal={Duke Math. J.},
  volume={51},
  date={1984},
  number={4},
  pages={765--850},
  issn={0012-7094},
  review={\MR {771383 (86j:14012)}},
}

\bib{Oort1962}{article}{
  author={Oort, Frans},
  title={Sur le sch\'ema de Picard},
  language={French},
  journal={Bull. Soc. Math. France},
  volume={90},
  date={1962},
  pages={1--14},
  issn={0037-9484},
  review={\MR {0138627 (25 \#2070)}},
}

\bib{Raynaud1970faisceaux}{book}{
  author={Raynaud, Michel},
  title={Faisceaux amples sur les sch\'emas en groupes et les espaces homog\`enes},
  language={French},
  series={Lecture Notes in Mathematics, Vol. 119},
  publisher={Springer-Verlag},
  place={Berlin},
  date={1970},
  pages={ii+218},
  review={\MR {0260758 (41 \#5381)}},
}

\bib{SerreMordellWeil1997}{book}{
  author={Serre, Jean-Pierre},
  title={Lectures on the Mordell-Weil theorem},
  series={Aspects of Mathematics},
  edition={3},
  note={Translated from the French and edited by Martin Brown from notes by Michel Waldschmidt; With a foreword by Brown and Serre},
  publisher={Friedr. Vieweg \& Sohn},
  place={Braunschweig},
  date={1997},
  pages={x+218},
  isbn={3-528-28968-6},
  review={\MR {1757192 (2000m:11049)}},
}

\bib{SerreOeuvres-IV}{book}{
  author={Serre, Jean-Pierre},
  title={\OE uvres. Collected papers. IV},
  language={French},
  note={1985--1998},
  publisher={Springer-Verlag},
  place={Berlin},
  date={2000},
  pages={viii+657},
  isbn={3-540-65683-9},
  review={\MR {1730973 (2001e:01037)}},
}

\bib{SGA6}{book}{
  title={Th\'eorie des intersections et th\'eor\`eme de Riemann-Roch},
  language={French},
  series={Lecture Notes in Mathematics, Vol. 225},
  note={S\'eminaire de G\'eom\'etrie Alg\'ebrique du Bois-Marie 1966--1967 (SGA 6); Dirig\'e par P. Berthelot, A. Grothendieck et L. Illusie. Avec la collaboration de D. Ferrand, J. P. Jouanolou, O. Jussila, S. Kleiman, M. Raynaud et J. P. Serre},
  publisher={Springer-Verlag},
  place={Berlin},
  date={1971},
  pages={xii+700},
  review={\MR {0354655 (50 \#7133)}},
  label={SGA 6},
}

\bib{Shioda1981}{article}{
  author={Shioda, Tetsuji},
  title={On the Picard number of a complex projective variety},
  journal={Ann. Sci. \'Ecole Norm. Sup. (4)},
  volume={14},
  date={1981},
  number={3},
  pages={303--321},
  issn={0012-9593},
  review={\MR {644520 (83i:14005)}},
}

\bib{Terasoma1985}{article}{
  author={Terasoma, Tomohide},
  title={Complete intersections with middle Picard number $1$ defined over ${\bf Q}$},
  journal={Math. Z.},
  volume={189},
  date={1985},
  number={2},
  pages={289--296},
  issn={0025-5874},
  review={\MR {779223 (86f:14010)}},
}

\bib{VanLuijk2007}{article}{
  author={van Luijk, Ronald},
  title={K3 surfaces with Picard number one and infinitely many rational points},
  journal={Algebra Number Theory},
  volume={1},
  date={2007},
  number={1},
  pages={1--15},
  issn={1937-0652},
  review={\MR {2322921 (2008d:14058)}},
}

\bib{Voisin2003vol2}{book}{
  author={Voisin, Claire},
  title={Hodge theory and complex algebraic geometry. II},
  series={Cambridge Studies in Advanced Mathematics},
  volume={77},
  note={Translated from the French by Leila Schneps},
  publisher={Cambridge University Press},
  place={Cambridge},
  date={2003 (reprinted in 2007)},
  pages={x+351},
  isbn={0-521-80283-0},
  review={\MR {1997577 (2005c:32024b)}},
}

\bib{Voisin-preprint}{misc}{
  author={Voisin, Claire},
  title={Hodge loci},
  date={2010},
  note={Preprint, \url {http://www.math.jussieu.fr/~voisin/Articlesweb/hodgeloci.pdf}},
}

\bib{Yamashita-preprint}{misc}{
  author={Yamashita, Go},
  title={The $p$-adic Lefschetz $(1;1)$ theorem in the semistable case, and the Picard number jumping locus},
  date={2010-09-28},
  note={Preprint},
}

\end{biblist}
\end{bibdiv}

\end{document}